\documentclass[11pt]{article}

\usepackage[T1]{fontenc}
\usepackage{lmodern}
\usepackage{amsmath,amssymb,amsthm,mathtools}
\usepackage[margin=1in]{geometry}
\usepackage{booktabs,array}
\usepackage{microtype}
\usepackage[hidelinks]{hyperref}
\usepackage[nameinlink,noabbrev]{cleveref}
\usepackage{enumitem}

\newcommand{\R}{\mathbb R}
\newcommand{\E}{\mathbb E}
\newcommand{\norm}[1]{\left\lVert #1\right\rVert}
\newcommand{\abs}[1]{\left\lvert #1\right\rvert}
\newcommand{\ip}[2]{\left\langle #1,#2\right\rangle}
\newcommand{\opnorm}[1]{\left\lVert #1\right\rVert_{\mathrm{op}}}
\newcommand{\fnorm}[1]{\left\lVert #1\right\rVert_{\mathrm F}}
\newcommand{\Reg}{\operatorname{Reg}}
\newcommand{\KL}{\operatorname{KL}}
\newcommand{\argmin}{\operatorname*{argmin}}
\newcommand{\conv}{\operatorname{conv}}
\newcommand{\cK}{\mathcal K}

\newcommand{\cQ}{\mathcal Q}
\newcommand{\cV}{\mathcal V}
\newcommand{\cY}{\mathcal Y}
\newcommand{\cN}{\mathcal N}
\newcommand{\cS}{\mathfrak S}

\newcommand{\trackcond}{\mathfrak C}

\theoremstyle{plain}
\newtheorem{theorem}{Theorem}[section]
\newtheorem{lemma}[theorem]{Lemma}
\newtheorem{proposition}[theorem]{Proposition}
\newtheorem{corollary}[theorem]{Corollary}
\theoremstyle{definition}

\theoremstyle{remark}

\crefname{theorem}{theorem}{theorems}
\crefname{lemma}{lemma}{lemmas}
\crefname{corollary}{corollary}{corollaries}
\crefname{proposition}{proposition}{propositions}
\crefname{definition}{definition}{definitions}
\crefname{remark}{remark}{remarks}
\crefname{example}{example}{examples}

\title{Online Control via Counterfactual Tracking}
\author{Yunzong Xu\\University of Illinois Urbana-Champaign\\\texttt{xyz@illinois.edu}}
\date{}

\begin{document}
\maketitle

\begin{abstract}
We develop a method for online control that competes with general classes of
causal policies, beyond the linear-controller classes used by most existing
algorithms.  Over a horizon of \(T\) rounds, we consider a known linear
dynamical system subject to adversarial disturbances and convex costs revealed
after each action.  The method simulates the benchmark policies on the
revealed history, uses their counterfactual state--input pairs to form a moving
reference, and applies a fixed stabilizing controller to track that reference
on the physical system.  We call this method \emph{counterfactual tracking}.

Counterfactual tracking applies to any measurable class of causal policies
that can be simulated from the revealed history and whose counterfactual
state--input pairs have bounded diameter at every round.  The policies may be
nonlinear or dynamic and need not share a parameterization.  We establish
PAC-Bayes regret guarantees that hold for every posterior over policies and
depend on its relative entropy to a chosen prior.  On a fixed plant with a
tracker of bounded impulse-response gain, a finite class of \(N\) policies
admits the minimax-optimal \(\sqrt{T\log N}\) dependence on \(T\) and \(N\)
when \(\log N=O(T)\).

As a central application, we compete with a system-level response ball of
stabilizing linear dynamical controllers.  The ball bounds the summed
impulse-response deviation from the fixed tracker, but imposes no common decay
envelope, memory length, or controller-order bound.  To our knowledge, this is
the first online-control guarantee uniform over such a class.  A matching lower bound shows that our guarantee is tight up to constants.
\end{abstract}

\section{Introduction}
\label{sec:introduction}

Over a horizon of \(T\) rounds, consider the known linear dynamical system
\begin{equation}
    x_{t+1}=Ax_t+Bu_t+w_t,
    \label{eq:intro-system}
\end{equation}
where \(x_t\) is the state, \(u_t\) is the control, and \(w_t\) is a
disturbance satisfying \(\norm{w_t}\le W\).  On round \(t\), the controller
observes \(x_t\), chooses \(u_t\), incurs the convex cost \(c_t(x_t,u_t)\),
and then observes the full cost function \(c_t\) and the next state
\(x_{t+1}\).  The current cost and disturbance may depend on the previously
revealed history.  Given a benchmark policy class \(\Pi\), we seek a causal
controller whose cumulative cost nearly matches that of the best policy in
\(\Pi\), chosen in hindsight and evaluated counterfactually.  More precisely,
each policy is rerun from the initial condition on the same realized cost
functions and disturbances, and its cumulative cost is evaluated along the
state--input trajectory that it would have generated.  This fixed-realization
comparison is the standard benchmark of online nonstochastic control, a
problem that has attracted substantial recent attention
\cite{agarwal2019online,agarwal2019logarithmic,
hazan2020nonstochastic,foster2020logarithmic,simchowitz2020improper,
simchowitz2020making,minasyan2022timevarying,lin2023adaptive,
brahmbhatt2025}; see \cite{hazansingh2026} for an introduction.

The problem joins two traditions of sequential decision-making.  Kalman's
state-space models, Bellman's dynamic programming, and the ensuing theories of
optimal and robust control study how present actions shape future states when
the model and objective are specified, under stochastic or worst-case
disturbances \cite{kalman1960,bellman1957,zhou1996}.  Online learning and
regret minimization, beginning with the work of Robbins, Blackwell, and
Hannan, ask how to act before the current objective is known and compare the
resulting performance with the best rule in hindsight
\cite{robbins1951,blackwell1956,hannan1957}; see
\cite{cesabianchi2006,hazan2016} for comprehensive introductions.  Online
control brings these questions together: the controller must choose each
action before seeing the current cost, adapt to the realized costs and
disturbances without a probabilistic model, and account for how each action
changes the states encountered later.

Modern online-control algorithms first established regret guarantees against
linear-controller benchmarks.  The principal benchmark was a \emph{stable
linear state-feedback class}: a collection of policies \(u_t=-Kx_t\) whose
gains are bounded and whose closed-loop responses \((A-BK)^r\) obey a common
geometric decay bound.  Cohen et al. obtained efficient \(O(\sqrt T)\) regret
for known dynamics, stochastic disturbances, adversarial quadratic costs, and
such a class \cite{cohen2018}.  Agarwal et al. treated adversarial disturbances
and general convex costs through disturbance--action control and online
convex optimization with memory \cite{agarwal2019online,anava2015}.  For
unknown dynamics, Hazan, Kakade, and Singh established the first efficient
sublinear guarantee, of order \(T^{2/3}\), against a stable linear
state-feedback class \cite{hazan2020nonstochastic}.  Simchowitz, Singh, and
Hazan subsequently treated partial observation and enlarged the benchmark to
stabilizing linear dynamical controllers \cite{simchowitz2020improper}.  These
results established sublinear regret without a probabilistic model for future
costs or disturbances, but remained tied to linear-controller classes.  Our
objective is broader: to design a method that competes with a general policy
class supplied as the benchmark.

The difficulty is that each benchmark policy is evaluated on its own state
trajectory.  When a policy \(\pi\) is rerun on the realized sequence, it
produces the counterfactual trajectory
\(y_t^\pi=(x_t^\pi,u_t^\pi)\) and incurs \(c_t(y_t^\pi)\).  Even if these
counterfactual losses can all be evaluated, an ordinary expert algorithm does
not directly produce a controller: different policies prescribe actions at
different states, and switching between them changes the state on which all
later actions operate.  An online-control algorithm must therefore learn
which counterfactual trajectory to follow while controlling the physical
system that generates its actual trajectory.

Existing algorithms usually use one set of closed-loop response coordinates
for both tasks.  For a stable linear state-feedback class, a common geometric
decay bound gives a single memory length that approximates every comparator;
disturbance--action coefficients then make the simulated state--input
trajectory affine in the decision vector \cite{agarwal2019online,anava2015}.
The extension to linear dynamical controllers similarly assumes a common
bound on the closed-loop response tails and selects one memory length from
that bound \cite{simchowitz2020improper}.  Other approaches prescribe a
memory length \cite{minasyan2022timevarying}, use a shared response dictionary
\cite{brahmbhatt2025}, or assume a shared smooth parameterization and a
uniform contraction bound \cite{lin2023adaptive}.

These constructions require both a common representation and a convex
learning problem.  To apply online convex optimization, they choose a vector
\(\theta\) from a convex set and use
\(\ell_t(\theta)=c_t(x_t(\theta),u_t(\theta))\) as the round-\(t\) loss.
Convexity of \(c_t\) in the state and action does not by itself make
\(\ell_t\) convex in \(\theta\); existing methods obtain convexity by making
\((x_t(\theta),u_t(\theta))\) affine in \(\theta\).  A general collection of
nonlinear, dynamic, or learned policies may admit neither shared coordinates
that represent every trajectory nor a convex loss in such coordinates.  This
is why methods built around a common controller parameterization are
difficult to extend to general policy classes.

This leads to the question studied here: can online control obtain sharp
regret guarantees against general causal policy classes when the policies
share neither controller coordinates nor a common memory or decay bound?  We
answer this question through a new approach, which we call
\emph{counterfactual tracking}.

\subsection{Main results}

\paragraph{Counterfactual tracking.}
For each policy \(\pi\), we simulate the counterfactual trajectory
\(y_t^\pi=(x_t^\pi,u_t^\pi)\) that it would produce on the revealed history.
An online learning rule aggregates these trajectories into a moving reference
\(\bar y_t=(\bar x_t,\bar u_t)\).  A fixed stabilizing gain \(K_0\) tracks
the reference using
\begin{equation}
    u_t=\bar u_t-K_0(x_t-\bar x_t).
    \label{eq:intro-tracker}
\end{equation}
For every comparator \(\pi\), regret decomposes exactly as
\begin{equation}
\begin{aligned}
    \sum_{t=1}^T\bigl[c_t(y_t)-c_t(y_t^\pi)\bigr]
    ={}&\sum_{t=1}^T\bigl[c_t(\bar y_t)-c_t(y_t^\pi)\bigr]\\
       &+\sum_{t=1}^T\bigl[c_t(y_t)-c_t(\bar y_t)\bigr].
\end{aligned}
\label{eq:intro-decomposition}
\end{equation}
The first term is the regret from learning the reference among the simulated
policy trajectories.  The second is the excess physical cost caused by
tracking error.

Every counterfactual trajectory obeys the same affine dynamics.  A convex
combination with fixed weights is therefore dynamically feasible; only a
change in the mixture weights creates a one-step reference defect.  If
\(e_t=x_t-\bar x_t\), the tracking error obeys
\[
    e_{t+1}=(A-BK_0)e_t+\delta_{t+1}.
\]
Here
\(\delta_{t+1}=A\bar x_t+B\bar u_t+w_t-\bar x_{t+1}\) is the one-step
reference defect: the amount by which consecutive reference points fail to
satisfy the plant dynamics.
The stable tracker converts cumulative reference defect into cumulative
state--input tracking error, and Lipschitz continuity converts that tracking
error into excess cost.  The learner can therefore average trajectories even
when policies or policy parameters cannot be averaged.  Counterfactual
tracking learns in trajectory space, not parameter space.

\paragraph{General policy classes.}
Let \(\nu\) be a prior on a measurable class \(\Pi\) of causal policies whose
counterfactual state--input pairs can be simulated and have diameter at most
\(\Delta\) at every round.  Exponential weighting assigns a Gibbs distribution
to the policies, and the reference is the average of their simulated
state--input pairs under this distribution.  Write \(\KL(\rho\|\nu)\) for the
relative entropy of a posterior \(\rho\) with respect to \(\nu\).  For convex
costs that are \(L\)-Lipschitz, we prove the PAC-Bayes regret bound
\begin{equation}
    \sum_{t=1}^T c_t(y_t)
    -\E_{\pi\sim\rho}\sum_{t=1}^T c_t(y_t^\pi)
    \le
    \frac{\KL(\rho\|\nu)}{\eta}
    +C\eta L^2\Delta^2\Gamma_{\rm tr}T,
    \label{eq:intro-convex-pac}
\end{equation}
for every posterior \(\rho\) that is absolutely continuous with respect to
\(\nu\) and every learning rate \(\eta>0\) satisfying a stated range
condition.  Here \(C\) is a numerical constant, \(\Delta\) is the uniform
pointwise diameter bound, and
\(\Gamma_{\rm tr}\) combines the plant norm with the cumulative
state--input impulse-response gain of the fixed tracker.  For a
finite class of \(N\) policies, the uniform prior and a point-mass posterior
give
\[
    O\!\left(
      L\Delta\left[
        \sqrt{\Gamma_{\rm tr}T\log N}+\log N
      \right]
    \right)
\]
regret.  On a fixed scalar plant, a matching lower bound shows that the
\(\sqrt{T\log N}\) dependence is optimal when \(\log N=O(T)\).

When each cost is \(\mu\)-strongly convex, its value at the weighted average of
the policy trajectories is smaller than the weighted average of their costs by
an amount proportional to their weighted squared distance from that average.
The same weighted variance controls how much the Gibbs distribution changes
and hence the reference defect.  This gives
\(O((L^2\Gamma_{\rm tr}/\mu)\log N)\) regret for a finite class, with matching
dependence on \(L,\mu,N\) when \(\Gamma_{\rm tr}=O(1)\).  These PAC-Bayes results
need not be computationally efficient: for a continuous policy class,
evaluating the Gibbs average may be intractable.  For response classes with
affine coordinates, counterfactual tracking instead yields explicit convex
algorithms.

\paragraph{System-level response balls without a common tail.}
Our central application competes with stabilizing linear dynamical controllers
whose closed-loop disturbance responses may have different decay rates and
controller orders.  System-level synthesis represents such a controller by
impulse-response blocks \(\Phi=\{\Phi^{[i]}\}_{i\ge1}\) subject to affine
feasibility constraints \cite{wang2019sls}.  Relative to the response
\(\Phi_0\) of the fixed tracker, define
\[
    \cS(R)
    =\left\{\Phi:
      \sum_{i\ge1}
      \fnorm{\Phi^{[i]}-\Phi_0^{[i]}}
      \le R
    \right\},
\]
subject to the affine feasibility constraints.  The sum is the
\emph{centered response gain}: if \(\norm{w_t}\le W\), then it bounds the
state--input deviation from the tracker by \(WR\) at every round.  Thus
\(\cS(R)\) is a system-level response ball of centered gain \(R\).  The bound
does not impose a common decay rate, tail envelope, memory length, or
controller order.  Only the first \(T-1\) response blocks can affect a
length-\(T\) trajectory, so the algorithm optimizes all of those blocks rather
than choosing a truncation length.  Mirror descent in a block norm within a
constant factor of \(\ell_1\) gives
\begin{equation}
    \Reg_T(\cS(R))
    \le C LWR\sqrt{T\log(eT)},
    \label{eq:intro-sls-upper}
\end{equation}
up to factors determined by the fixed tracker.  A matching scalar lower bound
uses delayed finite-impulse-response controllers and shows that
\(\sqrt{\log T}\) is the price of allowing the centered response mass to occur
at any of \(T\) delays.

To our knowledge, this is the first minimax-optimal online-control guarantee
uniform over an infinite-horizon system-level response ball defined only by a
bound on centered response gain.  Earlier work obtained
\(\widetilde O(\sqrt T)\) regret, where \(\widetilde O\) suppresses logarithmic
factors, against linear dynamical controllers for which the sum of the
closed-loop response-block norms after delay \(h\) is bounded by a shared
decay function
\cite{simchowitz2020improper}; other work prescribes a finite response length
in advance \cite{minasyan2022timevarying}.  Neither formulation covers
\(\cS(R)\): after every cutoff \(h\), some response in \(\cS(R)\) can place all
\(R\) units of centered response mass after \(h\).  At a response radius
proportional to \(\gamma^{-1}\), up to constants determined by the fixed
dimensions and tracker impulse-response sums, the ball contains the stable
linear state-feedback class with stability margin \(\gamma\) defined next.
At this radius, \eqref{eq:intro-sls-upper} is proportional to
\(LW\gamma^{-1}\sqrt{T\log(eT)}\).  Thus this larger class of dynamic
controllers retains square-root dependence on \(T\).  Its additional
\(\sqrt{\log T}\) factor is minimax necessary because the response mass may
appear at any delay.

\paragraph{Sharp stability dependence for the stable linear state-feedback class.}
For \(\kappa\ge1\) and \(\gamma\in(0,1)\), let
\(\cK_{\kappa,\gamma}\) denote the class of policies \(u_t=-Kx_t\) satisfying
\[
    \opnorm K\le\kappa,
    \qquad
    \opnorm{(A-BK)^r}\le\kappa^2(1-\gamma)^r
    \quad\text{for every }r\ge0.
\]
Thus \(\gamma\) measures the common stability margin of the class.  We
parameterize each comparator by its disturbance-response coefficients and run
projected gradient descent in those coordinates.  In the decay-weighted norm,
the comparator radius is proportional to \(\gamma^{-1/2}\), while changing the
response coefficients by one unit changes the counterfactual state--input
pair by an amount proportional to \(W\gamma^{-1/2}\).  Their product in the
projected-gradient bound gives the \(W\gamma^{-1}\sqrt T\) scale.  Up to
constants depending only on the fixed dimensions, \(\kappa\), and the tracker
impulse-response sums, we obtain
\[
    O\!\left(LW\gamma^{-1}\sqrt T\right)
\]
regret for globally \(L\)-Lipschitz convex costs and
\[
    O\!\left(GW^2\gamma^{-2}\sqrt T\right)
\]
regret for costs whose gradients are bounded by \(GD\) on the radius-\(D\)
ball, with \(D\) proportional to \(W/\gamma\) up to the same fixed factors.
We call this condition \((G,D)\)-regularity below.  Fixed-plant lower
bounds have the same powers of \(\gamma\), so these results identify the sharp
stability-margin dependence under both cost assumptions.  They retain the
square-root horizon rate, up to logarithmic factors, of disturbance--action
and spectral methods
\cite{agarwal2019online,brahmbhatt2025}, while showing that counterfactual
tracking pays no additional stability-margin penalty on this classical
benchmark.

The convex-cost results leave a separate question: when does strong convexity
improve the regret rate for these policy classes?

\paragraph{What strong convexity can and cannot do.}
Discretizing the stable linear state-feedback class, whose gain matrices have
\(d_K=d_ud_x\) entries, and applying the PAC-Bayes theorem gives
\[
    O\!\left(
      \frac{G^2W^2}{\mu\gamma^2}\,d_K\log T
    \right).
\]
A fixed-dimensional lower bound matches the dependence on \(T,W,\gamma\).  The
finite cover used by the algorithm may be exponential in \(d_K\).
Strong convexity does not, however, improve the minimax rate of the full
system-level response ball.  The ball contains \(T\) distinguishable response
delays, and the \(\sqrt{T\log T}\) lower bound remains valid for strongly
convex costs.  Thus strong convexity gives fast regret only when the policy
class also limits the number of distinguishable response patterns.

\subsection{Relation to prior work}

The modern online-control line began with efficient regret guarantees against
a strongly stable linear state-feedback class.  Cohen et al. treated
stochastic disturbances and adversarial quadratic costs \cite{cohen2018}.
Agarwal et al. then introduced the adversarial-disturbance setting now known
as online nonstochastic control, obtaining nearly tight guarantees for general
convex costs through disturbance--action control and online optimization with
memory \cite{agarwal2019online,anava2015}.  Hazan, Kakade, and Singh extended
sublinear regret to unknown dynamics, with a \(T^{2/3}\) rate
\cite{hazan2020nonstochastic}.  Spectral control improves the running-time
dependence on the inverse stability margin while retaining comparable regret
guarantees \cite{brahmbhatt2025}.  Our contribution for this benchmark is a
sharp characterization of the stability-margin dependence.  The
decay-weighted norm separates the comparator's margin from the fixed tracker's
impulse-response sums, and matching scalar lower bounds show that the factors
\(\gamma^{-1}\) for globally Lipschitz costs and \(\gamma^{-2}\) for regular
costs are unavoidable at fixed dimension, \(\kappa\), and tracker
impulse-response sums.

Recent work has also questioned whether a fixed linear state-feedback policy
is an adequate benchmark for general convex objectives and proposed constant
inputs as an alternative \cite{hebbar2025benchmarks}.  We take a different
route: the benchmark class is supplied as part of the problem, and our goal is
to compete with that class without requiring common controller coordinates.

The closest prior benchmark of linear dynamical controllers is due to
Simchowitz, Singh, and Hazan, who obtained \(\widetilde O(\sqrt T)\)
known-system regret, including under partial observation
\cite{simchowitz2020improper}.  Their class \(\Pi(\psi)\) requires every
comparator's sum of closed-loop response-block norms after delay \(m\) to be
bounded by the same function \(\psi(m)\to0\), and their algorithm chooses
\(m\) so that \(\psi(m)\) is of order \(T^{-1}\).  Other work fixes a finite
number of disturbance--action response blocks in advance
\cite{minasyan2022timevarying}.  These guarantees cover every fixed
finite-dimensional stable controller, as well as families sharing a specified
tail bound or memory length, but not one system-level response ball with no
common tail.
System-level synthesis supplies the affine response constraints
\cite{wang2019sls}.  We optimize every response block that can affect the
length-\(T\) trajectory, so no tail truncation is needed.  To our knowledge,
this is the first uniform guarantee for the no-tail class \(\cS(R)\), and its
\(\Theta(LWR\sqrt{T\log(eT)})\) rate is minimax optimal.  The inclusion
\(\cK_{\kappa,\gamma}\subseteq\cS(R_{\kappa,\gamma})\), with
\(R_{\kappa,\gamma}\) proportional to \(\gamma^{-1}\), converts the
response-ball bound into regret proportional to
\(LW\gamma^{-1}\sqrt{T\log(eT)}\) on the stable linear state-feedback class.
Here the proportionality constants depend only on the fixed dimensions,
\(\kappa\), and tracker impulse-response sums.  The class-specific algorithm
removes the \(\sqrt{\log T}\) factor, while the delayed-response lower bound
shows that this factor is unavoidable for the larger response ball.

System-level responses have also been used for offline and finite-horizon
regret-optimal synthesis against clairvoyant input sequences
\cite{didier2022regret,martin2022safe}.  Those minimax synthesis objectives
differ from the sequential, fixed-policy regret studied here.

Prior fast-rate results use stronger structure than our general policy-class
theorem.  Agarwal, Hazan, and Singh obtained polylogarithmic regret for
strongly convex costs under
well-conditioned stochastic excitation \cite{agarwal2019logarithmic}.
Foster and Simchowitz allowed arbitrary adversarial disturbances for known
quadratic costs \cite{foster2020logarithmic}, while Simchowitz obtained
polylogarithmic guarantees for response-parameterized linear control under
strongly convex costs \cite{simchowitz2020making}.  Our PAC-Bayes theorem
addresses a different question: for any simulatable policy class whose
state--input pairs have bounded diameter at every round, strong convexity
controls both the cost of averaging policy trajectories and the cost of
tracking their moving average.  The
delayed-response lower bound explains why PAC-Bayes aggregation cannot give
fast regret for the unrestricted system-level response ball.

Finally, exponential weighting and PAC-Bayes regret bounds with
relative-entropy complexity are classical
\cite{littlestone1994,cesabianchi2006}; regret--variance refinements make the
role of mixture variance explicit \cite{vanderhoeven2022}.  Earlier
reductions connect tracking or control with expert prediction
\cite{abbasi2014tracking,abbasi2019expert}.  Sequential-complexity frameworks
treat arbitrary policy classes in more general stateful games
\cite{rakhlin2011,rakhlin2015,bhatia2020}.  Controller boosting aggregates
black-box controllers under a uniform bounded-memory condition and assumes an
oracle that returns a weak controller \cite{agarwal2020boosting}.
Counterfactual tracking instead aggregates counterfactual state--input
trajectories and uses a stable physical controller to track their moving
barycenter.

Two complementary lines move beyond linear dynamical systems.
Contractive-perturbation methods treat smooth nonlinear and time-varying
systems through a shared convex policy parameterization
\cite{lin2023adaptive}.  Online switching
control selects from a finite black-box controller pool under unknown
nonlinear dynamics and bandit feedback \cite{li2023switching}.  In our
full-information, known-system setting, we can simulate every policy's
counterfactual trajectory and obtain sharp \(\sqrt{T\log N}\) regret for a
finite class without shared policy parameters.

\paragraph{Organization.}
\Cref{sec:model} defines the interaction and the fixed-realization benchmark.
\Cref{sec:path-tracking} gives the counterfactual-tracking reduction from
physical regret to reference regret and cumulative reference defect.
\Cref{sec:general-convex} develops the convex PAC-Bayes theorem.
\Cref{sec:structured-convex} derives sharp rates for the stable linear
state-feedback class and then develops the no-tail system-level response
result.
\Cref{sec:strong}
develops the strongly convex theory and its limits.  \Cref{sec:discussion}
discusses computation and scope.  The appendices collect the proofs in the
same order.

\section{Model and fixed-realization regret}
\label{sec:model}

We work with a known linear dynamical system and full-information costs.  Fix
a horizon \(T\), dimensions \(d_x,d_u\), and matrices
\(A\in\R^{d_x\times d_x}\), \(B\in\R^{d_x\times d_u}\).  The state starts
from \(x_1=0\) and evolves according to
\begin{equation}
    x_{t+1}=Ax_t+Bu_t+w_t,
    \qquad \norm{w_t}\le W.
    \label{eq:system}
\end{equation}
On round \(t\), the learner observes \(x_t\), chooses \(u_t\), and incurs
\(c_t(x_t,u_t)\).  It then observes the full function \(c_t\) and the next
state, and can therefore reconstruct the disturbance as
\[
    w_t=x_{t+1}-Ax_t-Bu_t.
\]
The environment may choose the current cost and disturbance as a function of
the previously revealed history, but not of the current action or the
learner's fresh private randomization.  Our upper bounds are pathwise: once
the realized sequence is fixed, no probabilistic assumption remains.

Because actions determine future states, a comparator must be evaluated by a
complete counterfactual rollout.  A causal policy \(\pi\) may use its own
state and the previously revealed costs and disturbances.  If it is
randomized, we regard its random seed as part of its description.  After the
interaction, we freeze the realized sequences \((c_t,w_t)_{t\le T}\) and
simulate \(\pi\) from \(x_1^\pi=0\).  Write
\(y_t^\pi=(x_t^\pi,u_t^\pi)\) for the resulting state--input pair.  For a
policy class \(\Pi\), define
\begin{equation}
    \Reg_T(\Pi)
    =\sum_{t=1}^T c_t(x_t,u_t)
      -\inf_{\pi\in\Pi}\sum_{t=1}^T c_t(x_t^\pi,u_t^\pi).
    \label{eq:regret}
\end{equation}
Thus the comparator faces the same realized disturbances and cost functions
as the learner, but evolves along its own state trajectory.  The environment
is not rerun against it.  This fixed-realization convention is a pathwise
strategy benchmark \cite{han2013strategies}; it differs from policy regret
against a reactive adversary, where the environment is rerun under the
comparator's actions \cite{arora2012policy}.  All lower bounds below use
oblivious sequences.

We equip state--input pairs with the product norm
\(\norm{(x,u)}^2=\norm x^2+\norm u^2\).  The results use three cost
conditions for three distinct purposes.  We use global Lipschitzness when no
trajectory radius is available: it converts state--input tracking error
directly into excess cost.  For quadratic-type costs, we instead prove that
every physical, reference, and comparator trajectory lies in a radius-\(D\)
ball and use a gradient bound on that ball.  Strong convexity supplies the
variance decrease used to obtain logarithmic regret.  A convex
cost is \emph{globally \(L\)-Lipschitz} if
\[
    \abs{c_t(y)-c_t(y')}\le L\norm{y-y'}
\]
for all \(y,y'\).  It is \emph{\((G,D)\)-regular} if it is differentiable and
\(\norm{\nabla c_t(y)}\le GD\) on the radius-\(D\) ball; hence it is
\(GD\)-Lipschitz there.  It is
\emph{\(\mu\)-strongly convex} on a convex set \(\cY\) if
\[
    c_t(y')\ge c_t(y)+\ip{g}{y'-y}+\frac\mu2\norm{y'-y}^2
\]
for all \(y,y'\in\cY\) and \(g\in\partial c_t(y)\).

The model now specifies how every policy is evaluated and the three cost
conditions used below.  We next isolate the deterministic relation between a
learned reference trajectory and the physical trajectory that tracks it.

\section{Counterfactual tracking}
\label{sec:path-tracking}

A causal reference rule may change after every revealed cost and disturbance.
Even if every candidate policy trajectory satisfies the plant dynamics,
consecutive reference points chosen by different rules need not belong to one
feasible trajectory.  Counterfactual tracking measures this inconsistency by
the one-step reference defect and converts cumulative defect into physical
state--input tracking error.

Fix a tracking gain \(K_0\) and write \(F_0=A-BK_0\).  Before observing
\(c_t\), a causal reference rule chooses
\(\bar y_t=(\bar x_t,\bar u_t)\) from the revealed history.  The physical
controller tracks the reference using
\begin{equation}
    u_t=\bar u_t-K_0(x_t-\bar x_t).
    \label{eq:tracking-controller}
\end{equation}
The learner supplies \((\bar x_t,\bar u_t)\), and the physical controller adds
the stabilizing correction \(-K_0(x_t-\bar x_t)\).  The reference need not
satisfy the dynamics.  Define its one-step defect by
\begin{equation}
    \delta_{t+1}
    =A\bar x_t+B\bar u_t+w_t-\bar x_{t+1}.
    \label{eq:reference-defect}
\end{equation}
Subtracting the reference transition from the plant dynamics shows that the
tracking error \(e_t=x_t-\bar x_t\) obeys
\begin{equation}
    e_{t+1}=F_0e_t+\delta_{t+1}.
    \label{eq:tracking-error}
\end{equation}
Thus the stable matrix \(F_0\) filters the defect sequence.  We use the
following impulse-response sums to bound the cumulative state and
state--input tracking errors:
\begin{equation}
\begin{aligned}
    \Lambda_x(K_0)
    &=\sum_{r\ge0}\opnorm{F_0^r},\\
    \Lambda(K_0)
    &=\sum_{r\ge0}
      \left\|
        \begin{bmatrix}I\\-K_0\end{bmatrix}F_0^r
      \right\|_{\mathrm{op}},\\
    B_+&=\opnorm{[A\ B]},
    \qquad
    \Gamma_{\rm tr}=1+\Lambda(K_0)B_+.
\end{aligned}
\label{eq:tracker-responses}
\end{equation}
The gain \(\Lambda_x(K_0)\) converts cumulative reference defect into
cumulative state error, while \(\Lambda(K_0)\) converts it into cumulative
state--input error.  Moreover,
\(\norm{A\Delta x+B\Delta u}\le B_+\norm{(\Delta x,\Delta u)}\), and
\(\Gamma_{\rm tr}=1+\Lambda(K_0)B_+\) is the coefficient that appears when
reference regret and excess tracking cost are combined.  We assume
\(\Lambda(K_0)<\infty\).

Unrolling \eqref{eq:tracking-error} and summing over time gives
\begin{equation}
    \sum_{t=1}^T\norm{y_t-\bar y_t}
    \le
    \Lambda(K_0)\sum_{t=1}^{T-1}\norm{\delta_{t+1}},
    \label{eq:tracking-convolution}
\end{equation}
when \(e_1=0\).  For globally \(L\)-Lipschitz costs, this immediately yields
\begin{equation}
    \sum_{t=1}^T[c_t(y_t)-c_t(\bar y_t)]
    \le
    L\Lambda(K_0)
    \sum_{t=1}^{T-1}\norm{\delta_{t+1}}.
    \label{eq:tracking-cost}
\end{equation}

\begin{theorem}[Counterfactual-tracking reduction]
\label{thm:generic-reduction}
Suppose a causal reference rule satisfies, for every \(\pi\in\Pi\),
\[
    \sum_{t=1}^T c_t(\bar y_t)
      -\sum_{t=1}^T c_t(y_t^\pi)
    \le R_T(\pi),
    \qquad
    \sum_{t=1}^{T-1}\norm{\delta_{t+1}}
    \le D_T^{\rm ref}.
\]
If the costs are globally \(L\)-Lipschitz, \(e_1=0\), and
\(\Lambda(K_0)<\infty\), then
\begin{equation}
    \sum_{t=1}^T c_t(y_t)
      -\sum_{t=1}^T c_t(y_t^\pi)
    \le
    R_T(\pi)+L\Lambda(K_0)D_T^{\rm ref}.
    \label{eq:generic-reduction}
\end{equation}
\end{theorem}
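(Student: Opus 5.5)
We use the exact decomposition \eqref{eq:intro-decomposition}: for a fixed comparator \(\pi\in\Pi\), add and subtract \(\sum_t c_t(\bar y_t)\), so that
\[
    \sum_{t=1}^T c_t(y_t)-\sum_{t=1}^T c_t(y_t^\pi)
    =\Bigl[\sum_{t=1}^T c_t(\bar y_t)-\sum_{t=1}^T c_t(y_t^\pi)\Bigr]
    +\sum_{t=1}^T\bigl[c_t(y_t)-c_t(\bar y_t)\bigr].
\]
The first bracket is at most \(R_T(\pi)\) by hypothesis. It remains to show that the second sum is at most \(L\Lambda(K_0)D_T^{\rm ref}\). Global \(L\)-Lipschitzness bounds each of its terms by \(L\norm{y_t-\bar y_t}\). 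The claim therefore reduces to the convolution bound \eqref{eq:tracking-convolution}, combined with the hypothesis \(\sum_t\norm{\delta_{t+1}}\le D_T^{\rm ref}\).

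We verify \eqref{eq:tracking-convolution} as follows. First derive the error recursion \eqref{eq:tracking-error}. Substitute \(u_t=\bar u_t-K_0e_t\) into the plant to get
\[
    x_{t+1}=Ax_t+B\bar u_t-BK_0e_t+w_t .
\]
Subtract \(\bar x_{t+1}=A\bar x_t+B\bar u_t+w_t-\delta_{t+1}\), which is the definition \eqref{eq:reference-defect}. This gives \(e_{t+1}=F_0e_t+\delta_{t+1}\). With \(e_1=0\), unrolling yields
\[
    e_t=\sum_{s=2}^{t}F_0^{t-s}\delta_s .
\]
The physical and reference inputs differ by \(u_t-\bar u_t=-K_0e_t\). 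Hence
\[
    y_t-\bar y_t=\begin{bmatrix}I\\-K_0\end{bmatrix}e_t
    =\sum_{s=2}^{t}\begin{bmatrix}I\\-K_0\end{bmatrix}F_0^{t-s}\delta_s .
\]
Apply the triangle inequality and submultiplicativity, then sum over \(t\le T\) and exchange the order of summation (Fubini for nonnegative terms). Each \(\norm{\delta_s}\) then receives the coefficient \(\sum_{r=0}^{T-s}\opnorm{[I;-K_0]F_0^r}\le\Lambda(K_0)\). This gives \(\sum_t\norm{y_t-\bar y_t}\le\Lambda(K_0)\sum_{s=2}^T\norm{\delta_s}\). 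Because \(\Lambda(K_0)<\infty\), every exchanged sum is finite.

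No step is a genuine obstacle; the argument is bookkeeping. The points needing care are:
\begin{itemize}
\item The indexing: the defect \(\delta_{t+1}\) is defined only for \(t\le T-1\), and \(e_1=0\) removes the initial transient.
\item The state--input error is \([I;-K_0]e_t\), not merely \(e_t\). This is why \(\Lambda(K_0)\) rather than \(\Lambda_x(K_0)\) appears.
\end{itemize}
Combining the two bounds gives \eqref{eq:generic-reduction} for every \(\pi\in\Pi\).
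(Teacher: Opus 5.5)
Your proposal is correct and follows the paper's proof essentially step for step. You unroll \(e_{t+1}=F_0e_t+\delta_{t+1}\) from \(e_1=0\), write \(y_t-\bar y_t=(e_t,-K_0e_t)\), interchange the two finite sums so that each \(\norm{\delta_s}\) receives a coefficient at most \(\Lambda(K_0)\), apply Lipschitz continuity, and add and subtract \(\sum_t c_t(\bar y_t)\), exactly as the paper does; the appeal to Fubini is unnecessary since both sums are finite, and \(\Lambda(K_0)<\infty\) is needed only to make the final bound finite.
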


The theorem is a reduction from physical regret to two quantities: the
reference-to-comparator regret \(R_T(\pi)\), and the cumulative reference
defect \(D_T^{\rm ref}\), whose contribution to excess tracking cost is
\(L\Lambda(K_0)D_T^{\rm ref}\).  The policy class determines the attainable
reference regret and defect bound; the plant enters through the tracker gain
\(\Lambda(K_0)\).  The next section uses PAC-Bayes analysis to bound both
quantities for a Gibbs barycenter.  Later, for response-parameterized classes,
mirror descent computes the reference by finite-dimensional convex
optimization.  The proof appears in \cref{app:tracking}.

\section{General policy classes under convex costs}
\label{sec:general-convex}

We first consider policy classes whose members need not share a
parameterization.  After each disturbance and cost are revealed, we simulate
every policy on the realized history.  Gibbs weights assign more mass to
policies with smaller past counterfactual cost, and the controller tracks the
weighted average of their current state--input pairs.  This procedure gives a
PAC-Bayes regret bound: every comparison distribution \(\rho\) over policies
pays the complexity term \(\KL(\rho\|\nu)\).

After round \(t\), the algorithm must be able to update each policy's
counterfactual state using the revealed \(w_t\) and \(c_t\), and then compute
that policy's action for round \(t+1\).  We call such a policy
\emph{simulatable}.  We also require the counterfactual state--input pairs to
have bounded pointwise diameter.  For an infinite class, we additionally let
\((\Pi,\mathcal F)\) be a standard Borel space with prior \(\nu\), assume that
the trajectory and loss maps are measurable, and assume that the
vector-valued integrals below exist.  These conditions ensure that the Gibbs
distribution and its barycenter are well defined.
Because past costs and disturbances are fully revealed, every simulatable
policy can be advanced before the next distribution is formed.  Define
\begin{equation}
    p_t(d\pi)
    =\frac{
       \exp\{-\eta\sum_{s<t}c_s(y_s^\pi)\}\,\nu(d\pi)
     }{
       \int
       \exp\{-\eta\sum_{s<t}c_s(y_s^{\pi'})\}\,\nu(d\pi')
     },
    \label{eq:gibbs-posterior}
\end{equation}
and use the pointwise barycenter of its counterfactual trajectories as the
reference:
\begin{equation}
    \bar y_t=\int y_t^\pi\,p_t(d\pi).
    \label{eq:gibbs-reference}
\end{equation}
This reference is causal because \(p_t\) depends only on losses revealed
before round \(t\).  The physical controller tracks it using
\eqref{eq:tracking-controller}.

Two properties make the Gibbs reference analyzable.  First, convexity bounds
its cost by the Gibbs-weighted average of the counterfactual costs.  Second,
if the policy weights were held fixed from round \(t\) to round \(t+1\), their
barycenter would satisfy the plant dynamics: every policy sees the same
disturbance, and the dynamics are affine.  Hence the reference defect is
caused only by the change from \(p_t\) to \(p_{t+1}\).  Substitution into
\eqref{eq:reference-defect} gives the exact identity
\begin{equation}
    \delta_{t+1}
    =\int x_{t+1}^\pi\,(p_t-p_{t+1})(d\pi).
    \label{eq:gibbs-defect-identity}
\end{equation}
Write
\(\norm{p-q}_{\rm TV}=\sup_A\abs{p(A)-q(A)}\) for total-variation distance.
If the counterfactual state--input pairs have diameter at most \(\Delta\) at
every round, then
\begin{equation}
    \norm{\delta_{t+1}}
    \le B_+\Delta\,
       \norm{p_t-p_{t+1}}_{\rm TV},
    \label{eq:gibbs-defect-tv}
\end{equation}
so the reference defect is small when successive Gibbs distributions are
close.  The update from \(p_t\) to \(p_{t+1}\) exponentially reweights the
policies by the loss
\(c_t(y_t^\pi)\).  Because the loss range is at most \(L\Delta\), exponential
weighting controls both its PAC-Bayes regret and the change between successive
distributions.

Let \(\Delta\) denote the uniform pointwise diameter of the counterfactual
state--input pairs:
\begin{equation}
    \sup_{t\le T}
    \sup_{\pi,\pi'\in\Pi}
    \norm{y_t^\pi-y_t^{\pi'}}
    \le\Delta.
    \label{eq:policy-diameter}
\end{equation}
This bounds the range of every counterfactual loss by \(L\Delta\).  The same
disturbance drives every policy, so the next-state diameter is at most
\(B_+\Delta\).  Combining the PAC-Bayes bound for exponential weights with
the counterfactual-tracking theorem yields the result below.  We write
\(\rho\ll\nu\) when \(\rho\) is
absolutely continuous with respect to \(\nu\).

\begin{theorem}[Convex PAC-Bayes counterfactual tracking]
\label{thm:pac-convex}
Assume \eqref{eq:policy-diameter}, \(e_1=0\), and
\(\Lambda(K_0)<\infty\).  For every \(t\), suppose \(c_t\) is convex and
\(L\)-Lipschitz on the convex hull of the physical point \(y_t\) and the
essential range of \(\{y_t^\pi:\pi\in\Pi\}\).  There are universal
constants \(c,C>0\) such that every \(\eta\) satisfying
\(\eta L\Delta\le c\) yields, simultaneously for every posterior
\(\rho\ll\nu\),
\begin{equation}
\begin{aligned}
    \sum_{t=1}^T c_t(y_t)
    -\E_{\pi\sim\rho}\sum_{t=1}^T c_t(y_t^\pi)
    \le{}&
    \frac{\KL(\rho\|\nu)}{\eta}
    +C\eta L^2\Delta^2\Gamma_{\rm tr}T.
\end{aligned}
\label{eq:pac-convex}
\end{equation}
\end{theorem}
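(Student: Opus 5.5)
We apply \cref{thm:generic-reduction} with the Gibbs reference \eqref{eq:gibbs-reference}. This splits the physical regret into a reference-regret term \(R_T\) and a defect term \(L\Lambda(K_0)D_T^{\rm ref}\), and we bound each by standard exponential-weights arguments. The Lipschitz hypothesis is only assumed on the convex hull of \(y_t\) and the essential range of \(\{y_t^\pi\}\). Since \(\bar y_t\) lies in the closed convex hull of that range, we redo the step \eqref{eq:tracking-cost} locally rather than quoting global Lipschitzness. Along the segment from \(y_t\) to \(\bar y_t\), which stays in the hull, we have \(c_t(y_t)-c_t(\bar y_t)\le L\norm{y_t-\bar y_t}\). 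Summing and using \eqref{eq:tracking-convolution} then gives the same conclusion as \cref{thm:generic-reduction}.

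\emph{Reference regret.} To make the losses nonnegative with range at most \(L\Delta\), we shift them: let \(\ell_t(\pi)=c_t(y_t^\pi)-m_t\), where \(m_t\) is the essential infimum. By convexity (Jensen for the Bochner integral), \(c_t(\bar y_t)\le\int c_t(y_t^\pi)\,p_t(d\pi)\). So it suffices to bound the Gibbs mixture regret \(\sum_t\E_{p_t}\ell_t-\E_\rho\sum_t\ell_t\). The Donsker--Varadhan variational formula gives \(-\frac1\eta\log Z_{T+1}\le \E_\rho\sum_t\ell_t+\KL(\rho\|\nu)/\eta\) for every \(\rho\ll\nu\), where \(Z_t\) is the normalizer in \eqref{eq:gibbs-posterior}. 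Hoeffding's lemma gives \(\log(Z_{t+1}/Z_t)\le-\eta\E_{p_t}\ell_t+\eta^2L^2\Delta^2/8\). Telescoping yields
\[
R_T(\rho)\le \frac{\KL(\rho\|\nu)}{\eta}+\frac{\eta L^2\Delta^2T}{8},
\]
which holds simultaneously for all \(\rho\) because \(p_t\) does not depend on \(\rho\).

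\emph{Reference defect.} By \eqref{eq:gibbs-defect-tv}, \(\norm{\delta_{t+1}}\le B_+\Delta\norm{p_t-p_{t+1}}_{\rm TV}\). Here we use the identity \eqref{eq:gibbs-defect-identity}: since \(p_t-p_{t+1}\) has total mass zero, we may subtract any fixed point from \(x_{t+1}^\pi\), and the next-state diameter is at most \(B_+\Delta\). It therefore remains to show \(\norm{p_t-p_{t+1}}_{\rm TV}\le C'\eta L\Delta\) when \(\eta L\Delta\le c\). The density satisfies \(dp_{t+1}/dp_t=e^{-\eta\ell_t}/\E_{p_t}e^{-\eta\ell_t}\). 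Because \(\ell_t\in[0,L\Delta]\), this ratio lies in \([e^{-\eta L\Delta},e^{\eta L\Delta}]\), so
\[
\norm{p_t-p_{t+1}}_{\rm TV}=\tfrac12\E_{p_t}\abs{1-dp_{t+1}/dp_t}\le \tfrac12(e^{\eta L\Delta}-1)\le \eta L\Delta
\]
once \(c\le1\). Summing over \(t\le T-1\) gives \(D_T^{\rm ref}\le B_+\eta L\Delta^2 T\). The tracking term is then at most \(L\Lambda(K_0)B_+\eta L\Delta^2T\).

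\emph{Combining.} Adding the two bounds gives
\[
\frac{\KL(\rho\|\nu)}{\eta}+\eta L^2\Delta^2T\bigl(\tfrac18+\Lambda(K_0)B_+\bigr)\le \frac{\KL(\rho\|\nu)}{\eta}+C\eta L^2\Delta^2\Gamma_{\rm tr}T,
\]
which is \eqref{eq:pac-convex}. We expect the main obstacle to be not the exponential-weights algebra but the measure-theoretic and local-Lipschitz bookkeeping. This includes justifying the defect identity \eqref{eq:gibbs-defect-identity} and Jensen's inequality for vector integrals over a standard Borel class, and checking that the physical point \(y_t\) and the barycenter \(\bar y_t\) lie in the region where Lipschitzness is assumed. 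The barycenter lies only in the closed convex hull, so we plan to handle this by continuity of convex functions, or else by using an essential-range version of the hull.
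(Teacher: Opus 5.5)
Your proposal is correct and is essentially the paper's proof: the tracking reduction, Jensen for the Gibbs barycenter, the Hoeffding--Donsker--Varadhan PAC-Bayes inequality \eqref{eq:app-pac-hoeffding}, and a total-variation bound on the Gibbs update fed into \eqref{eq:gibbs-defect-tv}. The deviations are minor. You bound \(\norm{p_{t+1}-p_t}_{\rm TV}\) directly from the density ratio \(e^{-\eta\ell_t}/\E_{p_t}e^{-\eta\ell_t}\in[e^{-\eta L\Delta},e^{\eta L\Delta}]\) rather than through the paper's interpolation derivative \eqref{eq:app-tilt-derivative}, and you handle the local-Lipschitz bookkeeping more explicitly; your closed-hull worry is harmless, since the essential range is compact in finite dimensions and its convex hull is therefore already closed.
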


For a finite class, take \(\nu\) to be uniform and
\(\rho=\delta_\pi\) for any comparator \(\pi\).  Then
\(\KL(\delta_\pi\|\nu)=\log N\).  Optimizing \(\eta\), subject to the range
condition in the theorem, gives the following bound.

\begin{corollary}[Finite policy classes]
\label{cor:finite-convex}
Let \(\Pi\) be a finite class of \(N\ge2\) policies, and suppose the
assumptions of \cref{thm:pac-convex} hold.  Then
\begin{equation}
    \Reg_T(\Pi)
    \le
    C L\Delta
    \left[
      \sqrt{\Gamma_{\rm tr}T\log N}+\log N
    \right].
    \label{eq:finite-convex}
\end{equation}
\end{corollary}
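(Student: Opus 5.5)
We derive the corollary directly from \cref{thm:pac-convex}. Take \(\nu\) to be the uniform distribution on the \(N\) policies. For an arbitrary comparator \(\pi\in\Pi\), take the point-mass posterior \(\rho=\delta_\pi\). This posterior is absolutely continuous with respect to \(\nu\), and \(\KL(\delta_\pi\|\nu)=\log N\). Its expectation term is \(\sum_t c_t(y_t^\pi)\), so for every admissible \(\eta\) (that is, \(\eta L\Delta\le c\)) the theorem gives
\[
    \sum_{t=1}^T c_t(y_t)-\sum_{t=1}^T c_t(y_t^\pi)
    \le \frac{\log N}{\eta}+C\eta L^2\Delta^2\Gamma_{\rm tr}T .
\]
The algorithm is run with one fixed \(\eta\) chosen in advance, independently of \(\pi\). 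The right-hand side does not depend on \(\pi\), so taking the supremum over \(\pi\in\Pi\) (equivalently, the infimum of the comparator cost) bounds \(\Reg_T(\Pi)\) by the same quantity.

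It remains to choose \(\eta\). The unconstrained minimizer is
\[
    \eta^\star=\sqrt{\log N/(C L^2\Delta^2\Gamma_{\rm tr}T)} .
\]
We take \(\eta=\min\{\eta^\star,\,c/(L\Delta)\}\), which respects the range condition, and split into two cases. If \(\eta^\star\le c/(L\Delta)\), both terms equal \(\sqrt{C}\,L\Delta\sqrt{\Gamma_{\rm tr}T\log N}\), so the bound is \(2\sqrt{C}\,L\Delta\sqrt{\Gamma_{\rm tr}T\log N}\). Otherwise \(\eta=c/(L\Delta)\). The first term is then \(L\Delta\log N/c\). The second term is \(C c\,L\Delta\,\Gamma_{\rm tr}T\), which is at most \(c^{-1}L\Delta\log N\) up to constants: \(\eta^\star>c/(L\Delta)\) means \(C\Gamma_{\rm tr}T<\log N/c^2\), hence \(Cc\,\Gamma_{\rm tr}T<\log N/c\). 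Adding the two cases and renaming constants gives \eqref{eq:finite-convex}.

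The only place requiring care is this boundary case. There the regret is dominated by the \(\log N\) term rather than the square-root term, and that is exactly why \eqref{eq:finite-convex} carries the additive \(L\Delta\log N\). Everything else follows immediately from the theorem. The Lipschitz and diameter assumptions carry over unchanged, since the corollary assumes the hypotheses of \cref{thm:pac-convex}.
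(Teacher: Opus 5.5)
Your proposal is correct and follows the paper's own proof. Both use the uniform prior and a point-mass posterior with \(\KL(\delta_\pi\|\nu)=\log N\), then take \(\eta\) as the minimum of the range-constrained value \(c/(L\Delta)\) and the unconstrained optimizer. Your explicit two-case analysis just fills in the step the paper leaves implicit, where the boundary case produces the additive \(L\Delta\log N\) term.
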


On a fixed scalar plant, the worst-case regret over classes of \(N\) policies
is \(\Omega(L\Delta\sqrt{T\min\{\log N,T\}})\).  When
\(\Gamma_{\rm tr}=O(1)\), \eqref{eq:finite-convex} is therefore sharp whenever
\(\log N=O(T)\).

If \(\nu\) is nonatomic, a point mass \(\delta_\pi\) is not absolutely
continuous with respect to \(\nu\).  The theorem therefore compares directly
with posterior averages, not with an individual policy.  Regret against one
policy can be obtained by assigning that policy positive prior mass or by
covering the policy class and comparing with a nearby cover element.  We do
not develop a general covering argument here.

Computing \eqref{eq:gibbs-reference} may require integration over the entire
policy class and need not be computationally efficient.  When each
counterfactual trajectory is affine in finitely many response coefficients,
mirror descent computes the reference through finite-dimensional convex
optimization.  We study such classes next.

\section{Online learning over closed-loop responses}
\label{sec:structured-convex}

We now replace integration over a general policy class by finite-dimensional
convex optimization.  This is possible when each counterfactual trajectory is
affine in a vector of closed-loop response coefficients.  We first study the
stable linear state-feedback class, whose common geometric decay permits a
finite truncation and a weighted norm matched to that decay.  We then remove
the shared decay assumption and learn over a system-level response ball with
bounded centered response gain.  In both cases, an online convex learner
updates the response coefficients, uses them to form the reference, and
applies \eqref{eq:tracking-controller} on the physical plant.  The no-tail
system-level response theorem is the main result of this section.

\subsection{The stable linear state-feedback class}
\label{sec:stable-convex}

Let \(\kappa\) bound the gain and transient amplification, and let
\(\gamma\) specify the geometric decay rate.  For \(\kappa\ge1\) and
\(\gamma\in(0,1)\), define
\begin{equation}
\cK_{\kappa,\gamma}
=\left\{K\in\R^{d_u\times d_x}\mathrel{\Big|}
\opnorm K\le\kappa,
\ \opnorm{(A-BK)^r}\le\kappa^2(1-\gamma)^r
\ \forall r\ge0
\right\}.
\label{eq:stable-class}
\end{equation}
We call \(\cK_{\kappa,\gamma}\) the \emph{stable linear state-feedback
class}.  Each \(K\in\cK_{\kappa,\gamma}\) defines the policy \(u_t=-Kx_t\).
Convolving the bounded disturbances with its geometrically decaying
closed-loop response gives
\begin{equation}
    \sup_t\norm{x_t^K}\le\frac{\kappa^2W}{\gamma},
    \qquad
    \sup_t\norm{u_t^K}\le\frac{\kappa^3W}{\gamma}.
    \label{eq:comparator-radius}
\end{equation}

The comparator gain and the tracking gain play different roles.  The
comparator is chosen in hindsight from \(\cK_{\kappa,\gamma}\), whereas
\(K_0\) is fixed in advance to track the moving reference.  In addition
to the quantities in \eqref{eq:tracker-responses}, define
\[
    \Lambda_B(K_0)=\sum_{r\ge0}\opnorm{(A-BK_0)^rB}.
\]
This is the cumulative state response to an input perturbation.  A square
matrix is \emph{Schur stable} if all its eigenvalues lie strictly inside the
unit disk.  To keep tracker conditioning separate from comparator stability,
define
\begin{equation}
    \trackcond(A,B)
    =\inf_{A-BK_0\ \mathrm{Schur\ stable}}
      \max\{1,\Lambda(K_0),\Lambda_B(K_0)\}.
    \label{eq:tracking-condition}
\end{equation}
This quantity is a property of the plant, not of a particular comparator,
and is finite whenever \(\cK_{\kappa,\gamma}\) is nonempty.  We fix a
tracker satisfying
\begin{equation}
    \max\{1,\Lambda(K_0),\Lambda_B(K_0)\}
    \le2\trackcond(A,B).
    \label{eq:conditioned-tracker}
\end{equation}
Accordingly, \(C_{\rm sys}\) denotes a constant that may depend on the fixed
dimensions, \(\kappa\), and \(\trackcond(A,B)\), but not on
\(T,W,\gamma,L,G\), or \(\mu\).  The notation \(O_{\rm sys}(\cdot)\) uses
the same convention.

The loss \(K\mapsto c_t(y_t^K)\) is generally nonconvex, so an online convex
algorithm cannot update the feedback gain directly.  Relative to the tracker,
however, each \(K\in\cK_{\kappa,\gamma}\) admits disturbance--action
coefficients
\begin{equation}
    M_K^{[i]}=(K_0-K)(A-BK)^{i-1},
    \qquad i\ge1.
    \label{eq:feedback-embedding}
\end{equation}
After truncating at \(H_\gamma\), the learner applies
\begin{equation}
    u_t=-K_0x_t+\sum_{i=1}^{H_\gamma}M_t^{[i]}w_{t-i}.
    \label{eq:stable-gpc-control}
\end{equation}
This is the standard disturbance--action parameterization.  We update its
coefficients in a norm that offsets their geometric decay.  Put
\(\rho^2=1-\gamma\) and define
\begin{equation}
    \norm{M}_\rho^2
    =\sum_{i=1}^{H_\gamma}
      \rho^{-2(i-1)}\fnorm{M^{[i]}}^2.
    \label{eq:weighted-norm}
\end{equation}
The weight \(\rho^{-(i-1)}\) cancels the decay of \(M_K^{[i]}\).  Consequently,
every embedded comparator lies in a ball of radius
\(O(\gamma^{-1/2})\), while changing the coefficients by one unit in this
norm changes the counterfactual state--input pair by at most
\(O(W\gamma^{-1/2})\).  The first quantity is the comparator radius; the
second is the sensitivity of the trajectory to the learned coefficients.
Their product is \(O(W\gamma^{-1})\).  We refer to this matched
primal--dual scaling as the response geometry.  It gives the
\(W\gamma^{-1}\sqrt T\) regret scale without an additional factor depending
on \(H_\gamma\).  To make the algorithm explicit, let \(y_t^M\) be the
counterfactual trajectory generated by a fixed response vector \(M\), and set
\(f_t(M)=c_t(y_t^M)\).  Because \(y_t^M\) is affine in \(M\), the loss
\(f_t\) is convex.
Projected gradient descent over a weighted ball containing every truncated
embedding \eqref{eq:feedback-embedding} takes the form
\[
    M_{t+1}
    =\operatorname{Proj}^{\rho}_{R_\gamma}
      (M_t-\eta g_t),
    \qquad g_t\in\partial_\rho f_t(M_t).
\]
Both the projection and the subgradient are taken with respect to the weighted
inner product induced by \(\norm{\cdot}_\rho\).
The update from \(M_t\) to \(M_{t+1}\) makes successive reference points
dynamically inconsistent;
\eqref{eq:tracking-error} filters the resulting reference defect through the
stable matrix \(F_0\).
The ball radius, update conventions, and explicit constants appear in
\cref{app:stable-convex}.

\begin{theorem}[Convex regret for the stable linear state-feedback class]
\label{thm:convex-linear}
Assume \(\cK_{\kappa,\gamma}\ne\varnothing\), and choose \(K_0\) as in
\eqref{eq:conditioned-tracker}.  Choose \(H_\gamma\) as in
\cref{app:stable-convex}; in particular,
\[
    H_\gamma
    =O\!\left(
      \gamma^{-1}\log\!\left(2+\frac{T}{\gamma}\right)
    \right).
\]
The weighted-response controller satisfies:
\begin{enumerate}[label=(\roman*),leftmargin=2.2em]
\item for globally \(L\)-Lipschitz convex costs,
\begin{equation}
    \Reg_T(\cK_{\kappa,\gamma})
    \le C_{\rm sys}LW\gamma^{-1}\sqrt T;
    \label{eq:global-lip-cor}
\end{equation}
\item every compared trajectory lies in a ball of radius
\(D_\gamma\le C_{\rm sys}W/\gamma\), and for \((G,D_\gamma)\)-regular costs,
\begin{equation}
    \Reg_T(\cK_{\kappa,\gamma})
    \le C_{\rm sys}GW^2\gamma^{-2}\sqrt T.
    \label{eq:GD-cor}
\end{equation}
\end{enumerate}
\end{theorem}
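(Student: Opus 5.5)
We plan to apply the counterfactual-tracking reduction, \cref{thm:generic-reduction}, with the reference \(\bar y_t=y_t^{M_t}\), the counterfactual state--input pair generated by the current response vector \(M_t\) on the revealed disturbances. The key fact is that, for a fixed \(M\), the trajectory \(y_t^M\) (state \(x_t^M=\sum_{r\ge0}F_0^r\bigl[B\sum_i M^{[i]}w_{t-1-r-i}+w_{t-1-r}\bigr]\), input \(-K_0x_t^M+\sum_iM^{[i]}w_{t-i}\)) is affine in \(M\) and exactly dynamically feasible. The argument then splits the physical regret against \(K\in\cK_{\kappa,\gamma}\) into three pieces. The first is OGD regret of \(f_t\) against the truncated embedding \(M_K\). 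The second is the truncation error \(\sum_t[c_t(y_t^{M_K})-c_t(y_t^K)]\). The third is the tracking cost \(L\Lambda(K_0)\sum_t\norm{\delta_{t+1}}\).

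For the truncation error we use \(\fnorm{M_K^{[i]}}\le C_{\rm sys}\kappa^3(1-\gamma)^{i-1}\). Dropping the tail \(i>H_\gamma\) changes \(y_t\) by \(C_{\rm sys}W\gamma^{-1}(1-\gamma)^{H_\gamma}\). The choice \(H_\gamma\asymp\gamma^{-1}\log(2+T/\gamma)\) makes this at most \(W/T\), so the total contribution is \(O(LW)\).

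For the response geometry we verify two bounds.
\begin{enumerate}[label=(\alph*),leftmargin=2.2em]
\item The radius bound \(\norm{M_K}_\rho^2\le C\sum_i(1-\gamma)^{-(i-1)}(1-\gamma)^{2(i-1)}=O(\gamma^{-1})\), so we take \(R_\gamma=C_{\rm sys}\gamma^{-1/2}\).
\item The sensitivity bound \(\norm{y_t^M-y_t^{M'}}\le C_{\rm sys}W\gamma^{-1/2}\norm{M-M'}_\rho\). This follows from Cauchy--Schwarz, \(\sum_i\fnorm{\Delta^{[i]}}\le(\sum_i\rho^{2(i-1)})^{1/2}\norm{\Delta}_\rho\le\gamma^{-1/2}\norm{\Delta}_\rho\), combined with \(\Lambda_B(K_0)\) and \(\Lambda(K_0)\) to propagate the input perturbation.
\end{enumerate}
Bound (b) shows that the dual \(\rho\)-norm of the gradients is at most \(G_f=LC_{\rm sys}W\gamma^{-1/2}\). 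Projected OGD in the \(\rho\)-inner product then gives regret \(R_\gamma^2/\eta+\eta G_f^2T\).

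For the defect, the reference at \(t+1\) uses \(M_{t+1}\) while the reference at \(t\) uses \(M_t\). Since \(y^{M_t}\) is feasible, \(\delta_{t+1}=x_{t+1}^{M_t}-x_{t+1}^{M_{t+1}}\), and its norm is at most \(C_{\rm sys}W\gamma^{-1/2}\norm{M_{t+1}-M_t}_\rho\le C_{\rm sys}W\gamma^{-1/2}\eta G_f\). Summing, the tracking term is \(L\Lambda\cdot T\eta\,C_{\rm sys}L W^2\gamma^{-1}\), of the same order as \(\eta G_f^2T\). The total is \(R_\gamma^2/\eta+C\eta L^2W^2\gamma^{-1}T\). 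Choosing \(\eta\asymp(LW\sqrt T)^{-1}\) gives \(C_{\rm sys}LW\gamma^{-1}\sqrt T\), which is (i).

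For (ii), globally Lipschitz costs are unavailable, so we first bound radii. Comparators satisfy \eqref{eq:comparator-radius}. Every \(M\) in the \(\rho\)-ball of radius \(R_\gamma\) gives \(\norm{y_t^M}\le C_{\rm sys}W(1+\gamma^{-1/2}R_\gamma)=C_{\rm sys}W/\gamma\). The physical point satisfies \(\norm{y_t}\le\norm{\bar y_t}+\Lambda\sum\norm{\delta}\)-type bounds, pointwise via \(\sup_t\norm{e_t}\le\Lambda_x\sup\norm\delta\), which is within \(C_{\rm sys}W/\gamma\) once \(\eta\) is small. This gives \(D_\gamma\), and the costs are \(GD_\gamma\)-Lipschitz on that ball. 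We rerun the argument with \(L=GD_\gamma=C_{\rm sys}GW/\gamma\) and obtain \(GW^2\gamma^{-2}\sqrt T\).

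The main obstacle is the sensitivity constant (b). The truncation index runs up to \(H_\gamma\sim\gamma^{-1}\log T\), and the state passes the input perturbation through \(F_0\) convolved with \(M^{[i]}w_{t-i}\). We must make sure that only \(\ell_1\) sums over \(i\) appear, which Cauchy--Schwarz converts to \(\gamma^{-1/2}\) with no \(H_\gamma\) or \(\log T\) factor, and that the tracker's \(\Lambda_B\) does not interact with \(\gamma\). We would write \(x^M-x^{M'}=\sum_r F_0^rB\,\sum_i\Delta^{[i]}w_{\cdot-r-i}\) and bound it by \(\Lambda_B W\sum_i\fnorm{\Delta^{[i]}}\) directly.
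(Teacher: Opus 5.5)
Your proposal is correct and follows essentially the same route as the paper. The reference \(y_t^{M_t}\) tracked by \(K_0\) reproduces the weighted disturbance--action controller, the decay-weighted norm gives comparator radius \(O(\gamma^{-1/2})\) and sensitivity \(O(W\gamma^{-1/2})\) through \(S_{\rho,H}\le\gamma^{-1/2}\), and the defect \(x_{t+1}^{M_t}-x_{t+1}^{M_{t+1}}\) is filtered by \(F_0\), with the tail of \(M_K\) dropped after \(H_\gamma\). The paper differs only cosmetically: in (ii) it bounds the physical radius by the diameter \(2R_\gamma\) of the coefficient ball, so it never needs \(\eta\) to be small, and it chooses \(H\) so that the truncation term matches the main term rather than being \(O(LW)\).
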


The next lower bound matches both upper bounds: it gives
\(LW\gamma^{-1}\sqrt T\) for globally Lipschitz costs and
\(GW^2\gamma^{-2}\sqrt T\) for regular costs.  The construction uses a fixed
scalar plant on which the tracker eliminates state error in one step.

\begin{theorem}[Fixed-plant convex lower bound]
\label{thm:convex-lower}
There are universal constants \(c,C>0\) with the following property.  Fix
\(\gamma\in(0,1/2]\) and \(T\ge C/\gamma\).  On the scalar plant
\(x_{t+1}=u_t+W\), there is an oblivious distribution over globally
\(L\)-Lipschitz convex costs for which every causal randomized controller
has expected regret at least
\begin{equation}
    cLW\gamma^{-1}\sqrt T.
    \label{eq:lower-L}
\end{equation}
There is also an oblivious distribution over \((G,D)\)-regular costs, with
\(D=2W/\gamma\), for which the expected regret is at least
\begin{equation}
    cGW^2\gamma^{-2}\sqrt T.
    \label{eq:lower-GD}
\end{equation}
\end{theorem}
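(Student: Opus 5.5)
The plan is a two-expert lower bound embedded in the plant \(x_{t+1}=u_t+W\). Here \(A=0\) and \(B=1\), so \(u_t\) sets \(x_{t+1}\) exactly. For a comparator \(u_t=-Kx_t\) we have \(A-BK=-K\), so \(\cK_{\kappa,\gamma}\) contains every \(K\) with \(\abs K\le1-\gamma\), for any \(\kappa\ge1\). Comparators obey \(x^K_{t+1}=-Kx^K_t+W\) with fixed point \(W/(1+K)\). I use the two gains \(K_-=1-\gamma\) and \(K_+=-(1-\gamma)\), whose fixed points are about \(W/2\) and exactly \(W/\gamma\). Their steady states are therefore separated by order \(W/\gamma\) for \(\gamma\le1/2\). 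All costs depend only on \(x\), so \(u\) plays no role beyond setting the next state.

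For the Lipschitz bound, let \(I=[W/(2-\gamma),W/\gamma]\) and use
\[
  c_t(x,u)=\tfrac L2\varepsilon_t x+\tfrac L2\operatorname{dist}(x,I).
\]
This is convex and globally \(L\)-Lipschitz. Because the penalty slope \(L/2\) at least matches the linear slope, replacing any state by its projection onto \(I\) never raises the cost. The signs \(\varepsilon_t\) are i.i.d. with mean \(\sigma\beta\), where \(\sigma\) is a uniform random sign and \(\beta=1/\sqrt T\). The learner's \(x_t\) is fixed by \(u_{t-1}\) before \(c_t\) is revealed, so \(\E[c_t(x_t)]\ge\tfrac L2\sigma\beta\,\E[\Pi_I x_t]\). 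The best fixed comparator gets \(\tfrac L2\sigma\beta\) times the endpoint of \(I\) favored by \(\sigma\). Summing, the regret is at least \(\tfrac L2\beta\,\abs I\) times the expected number of rounds in which \(\Pi_Ix_t\) lies on the wrong half of \(I\) relative to \(\sigma\), minus comparator transients.

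The step I expect to be the main obstacle is the standard Le Cam/KL step. Since \(x_t\) is measurable with respect to \(\varepsilon_{1:t-1}\) (plus private randomness), and \(\KL\) between the two product laws is \(O(T\beta^2)=O(1)\), each round has constant probability of a wrong guess. The regret is then \(\Omega(L\beta\abs I\,T)=\Omega(LW\gamma^{-1}\sqrt T)\), up to the transient term, with a pathwise realization of the signs replacing expectations in the comparator's cost.

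The transient must be checked. The comparator \(K_+\) follows \(x_{t+1}=(1-\gamma)x_t+W\) from \(0\), so its total shortfall from \(W/\gamma\) is \(\sum_t (W/\gamma)(1-\gamma)^t\le W/\gamma^2\). The comparator \(K_-\) converges at rate \((1-\gamma)^t\) to its fixed point, giving a shortfall of the same order. Outside \(I\) this shortfall incurs the penalty term at cost \(O(LW\gamma^{-2})\), which the bias \(\beta\) does not reduce. The requirement is that \(LW\gamma^{-2}\) be small compared with \(LW\gamma^{-1}\sqrt T\), which means \(T\ge C/\gamma^2\). The statement only assumes \(T\ge C/\gamma\), so I am not yet sure this closes. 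To handle it I would first try letting the cost vanish during an initial burn-in of about \(\gamma^{-1}\log T\) rounds, or otherwise lower-bounding regret only over later rounds. That gives up \(O(\gamma^{-1}\log T)\) rounds, harmless when \(T\) is polynomially larger, but closing the exact gap between \(T\ge C/\gamma\) and \(T\ge C/\gamma^2\) may need a different choice of \(I\) or a sharper transient argument.

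For the \((G,D)\)-regular bound with \(D=2W/\gamma\), I keep the same comparators and random signs but scale the slope to \(GD\). Take
\[
  c_t(x,u)=\tfrac{GD}{4}\varepsilon_t x+\tfrac{GD}{4}\,h\bigl(\operatorname{dist}(x,I)\bigr),
\]
where \(h\) is a Huber smoothing of the absolute value, with linear slope \(1\) outside a small width chosen to keep the gradient bounded. This is differentiable and convex. Its gradient is at most \(GD/2\le GD\) on the radius-\(D\) ball. Projection onto \(I\) is again weakly improving up to a negligible smoothing error, since the penalty slope reaches the linear slope once past the smoothing width. The all-comparator radius \(W/\gamma\le D\) holds, and \(\abs K\le1-\gamma<1\) keeps \(\abs u\le D\) too. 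Repeating the argument with \(L\) replaced by \(GD/2\asymp GW/\gamma\) gives regret \(\Omega(GD\cdot W\gamma^{-1}\sqrt T)=\Omega(GW^2\gamma^{-2}\sqrt T)\), subject to the same transient issue.
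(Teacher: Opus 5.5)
Your construction is sound and closes under exactly the stated hypothesis \(T\ge C/\gamma\). As written, though, the proposal stops short, because the transient accounting that worries you is wrong.

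The \(\gamma^{-2}\) shortfall belongs to \(K_+\). Its state \(x_t^{K_+}=\frac W\gamma\bigl(1-(1-\gamma)^{t-1}\bigr)\) lies in \([W,W/\gamma)\subset I\) for every \(t\ge2\), so it never pays the penalty. At \(t=1\) every trajectory, the learner's included, sits at \(x_1=0\) and pays the same. Its shortfall \(\sum_t(W/\gamma-x_t^{K_+})\le W/\gamma^2\) therefore enters only through the linear term, whose conditional mean carries the factor \(\beta\). It costs \(\frac L2\beta W\gamma^{-2}=LW/(2\gamma^2\sqrt T)\), which is \((2\gamma T)^{-1}\) times the target \(LW\gamma^{-1}\sqrt T\) and is absorbed precisely when \(T\ge C/\gamma\).

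The comparator that does leave \(I\) is \(K_-\), but its oscillation \(x_t^{K_-}-x^*=-(-(1-\gamma))^{t-1}x^*\), with \(x^*=W/(2-\gamma)\), has amplitude \(x^*\le W\), not \(W/\gamma\). At even \(t\) it lies in \(I\), since \(2x^*\le W/\gamma\) for \(\gamma\le1/2\). At odd \(t\) it lies below \(I\) by \((1-\gamma)^{t-1}x^*\). Its total penalty is thus at most \(\frac L2\,x^*/(\gamma(2-\gamma))=O(LW/\gamma)\), a factor \(\sqrt T\) below the target. Also \(\sum_t x_t^{K_-}\le Tx^*\), because the alternating geometric sum is nonnegative.

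Hence the regret is at least \(cLW\gamma^{-1}\sqrt T-O(LW\gamma^{-2}T^{-1/2}+LW\gamma^{-1})\), and no burn-in is needed. A burn-in of length \(\gamma^{-1}\log T\) would in fact demand more than \(T\ge C/\gamma\).

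The information step needs one small repair. With \(\beta=1/\sqrt T\) the divergence is about \(2\), so Pinsker gives no total-variation bound below \(1\). Either take \(\beta=c_0/\sqrt T\) with small \(c_0\), or use Bretagnolle--Huber, \(P_+(A^c)+P_-(A)\ge\frac12e^{-\KL(P_+\|P_-)}\). The same accounting carries over to your Huber-smoothed regular-cost version once the smoothing width is taken small.

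With these repairs your route differs genuinely from the paper's. The paper draws unbiased independent signs and uses the purely linear cost \(\sigma_tLx\), with comparators \(K=0\) and \(K=-(1-\gamma)\). Because \(\E\sigma_t=0\) and \(x_t\) is fixed before \(\sigma_t\), the learner's expected cost is exactly zero however large \(x_t\) is, so no confinement penalty is needed. All of the regret comes from hindsight selection: \(\E\min_j\sum_t\sigma_tLx_t^{(j)}=-\frac L2\E\bigl|\sum_t\sigma_t(x_t^{(1)}-x_t^{(0)})\bigr|\). Khintchine bounds this by \(-cLW\gamma^{-1}\sqrt T\) once \(\sum_t(x_t^{(1)}-x_t^{(0)})^2\ge cW^2\gamma^{-2}T\), and that separation estimate is where \(T\ge C/\gamma\) enters. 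The regular case is then just \(\sigma_tGDx\), whose gradient is exactly \(GD\), so no smoothing is needed.

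Your biased-coin argument is the classical two-point (Le Cam) method. It requires the penalty, since a biased sign would otherwise let the learner gain by driving \(x_t\) to an extreme. It also requires Huber smoothing for differentiability and an information-theoretic step. It gives the same rate, with \(T\ge C/\gamma\) entering through the \(\beta\)-weighted transient instead of the separation estimate.
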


At fixed dimension, \(\kappa\), and tracker conditioning,
\cref{thm:convex-linear,thm:convex-lower} match in \(T,W,\gamma\).  In
particular, the lower bound identifies \(\gamma^{-1}\) and \(\gamma^{-2}\) as
the sharp stability-margin dependence in the two cost regimes.  The
implementation uses \(H_\gamma d_ud_x\) parameters and runs in time
polynomial in \(H_\gamma,d_x,d_u\).  Under additional spectral assumptions,
online spectral control can compress this response representation
\cite{brahmbhatt2025}; we do not impose those assumptions here.  Thus the
theorem preserves the square-root horizon dependence established by
disturbance--action control \cite{agarwal2019online}, while the lower bound
pins down the dependence on the disturbance scale and stability margin.

\subsection{Online control over system-level responses without a common tail}
\label{sec:sls-convex}

The result for the stable linear state-feedback class uses a common geometric
decay bound to choose one truncation length for the entire class.  Prior response-learning
methods likewise choose a truncation length from a shared stability condition
or fix the number of response blocks in advance
\cite{agarwal2019online,simchowitz2020improper,minasyan2022timevarying}.  We
instead bound the sum of the response-block deviations from a fixed tracker.
As shown below, a radius \(R\) limits the state--input deviation from the
tracker to \(WR\), but places no restriction on the delay at which that
deviation occurs.  System-level synthesis describes the resulting class
through its disturbance-response matrices.  A zero-initialized causal linear
time-invariant controller maps past disturbances to states and actions as
follows:
\begin{equation}
\begin{aligned}
    x_t^\Phi&=\sum_{i=1}^{t-1}\Phi_x^{[i]}w_{t-i},\\
    u_t^\Phi&=\sum_{i=1}^{t-1}\Phi_u^{[i]}w_{t-i},
\end{aligned}
\label{eq:sls-response-trajectory}
\end{equation}
where
\begin{equation}
    \Phi_x^{[1]}=I,
    \qquad
    \Phi_x^{[i+1]}=A\Phi_x^{[i]}+B\Phi_u^{[i]}.
    \label{eq:sls-recursion}
\end{equation}
Equation~\eqref{eq:sls-recursion} is the affine state-feedback
system-level synthesis constraint \cite{wang2019sls}.  The tracker \(K_0\)
induces the baseline response
\[
    \Phi_{0,x}^{[i]}=F_0^{i-1},
    \qquad
    \Phi_{0,u}^{[i]}=-K_0F_0^{i-1}.
\]
Write \(\Phi^{[i]}=[(\Phi_x^{[i]})^\top,(\Phi_u^{[i]})^\top]^\top\), and
measure the centered response by
\begin{equation}
    \mathfrak r_0(\Phi)
    =\sum_{i\ge1}\fnorm{\Phi^{[i]}-\Phi_0^{[i]}},
    \qquad
    \cS(R)=\{\Phi:\eqref{eq:sls-recursion},\ \mathfrak r_0(\Phi)\le R\}.
    \label{eq:sls-class}
\end{equation}
We call \(\mathfrak r_0(\Phi)\) the \emph{centered response gain} and
\(\cS(R)\) the \emph{system-level response ball} of centered gain \(R\).  The
name records an operational bound: for disturbances satisfying
\(\norm{w_t}\le W\),
\begin{equation}
    \norm{y_t^\Phi-y_t^{\Phi_0}}
    \le W\mathfrak r_0(\Phi)
    \le WR.
    \label{eq:centered-gain-trajectory}
\end{equation}
Thus the radius bounds the state--input deviation from the fixed tracker.  It
does not bound the delay at which that deviation occurs.  The elements of
\(\cS(R)\) are absolutely summable stable responses, but they need not share a
decay rate or controller order.  In particular, the ball is not restricted to
static linear state-feedback policies.

\begin{proposition}[Scope of the system-level response ball]
\label{prop:sls-inclusions}
Every finite-dimensional, zero-initialized, internally stable linear
time-invariant dynamical controller with state feedback belongs to \(\cS(R)\)
for some finite \(R\).
Conversely, each \(\Phi\in\cS(R)\) admits an internally stabilizing
system-level synthesis realization, possibly of infinite dimension.  For
the stable linear state-feedback class,
\begin{equation}
    \cK_{\kappa,\gamma}
    \subseteq\cS(R_{\kappa,\gamma}),
    \qquad
    R_{\kappa,\gamma}
    =\frac{\kappa^2\Delta_{\kappa,\gamma,0}}{\gamma}
      \bigl(1+\chi_0\Lambda_{0,B}\bigr),
    \label{eq:sls-static-radius}
\end{equation}
where
\(\Delta_{\kappa,\gamma,0}=\sup_{K\in\cK_{\kappa,\gamma}}\fnorm{K-K_0}\),
\(\chi_0=\sqrt{1+\opnorm{K_0}^2}\), and
\(\Lambda_{0,B}=\sum_{r\ge0}\opnorm{F_0^rB}\).  Hence
\(R_{\kappa,\gamma}=O_{\rm sys}(\gamma^{-1})\) at fixed conditioning.
\end{proposition}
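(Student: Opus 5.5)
We treat the three claims separately: two are direct computations with impulse responses, and one is the standard system-level synthesis realization argument.

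\emph{Finite-dimensional controllers.} Let the controller have state \(\xi_t\), with \(\xi_{t+1}=A_c\xi_t+B_cx_t\) and \(u_t=C_c\xi_t+D_cx_t\), zero-initialized. The closed loop from \(w\) to \((x_t,\xi_t)\) is then a finite-dimensional LTI system with some matrix \(\mathcal A_{\rm cl}\). Internal stability means \(\mathcal A_{\rm cl}\) is Schur stable, so its impulse-response blocks satisfy \(\opnorm{\mathcal A_{\rm cl}^{r}}\le c\,\varrho^r\) with \(\varrho<1\). The closed-loop maps \(w\mapsto x\) and \(w\mapsto u\) therefore have the convolution form \eqref{eq:sls-response-trajectory} with geometrically decaying, hence absolutely summable, blocks \(\Phi_x^{[i]},\Phi_u^{[i]}\). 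The constraint \eqref{eq:sls-recursion} follows by substituting \eqref{eq:sls-response-trajectory} into \(x_{t+1}=Ax_t+Bu_t+w_t\) and matching coefficients of \(w_{t-i}\). The coefficient of \(w_t\) gives \(\Phi_x^{[1]}=I\); those of \(w_{t-i}\) give the recursion. Since \(\sum_i\fnorm{\Phi_0^{[i]}}\) is finite by \(\Lambda(K_0)<\infty\) (up to a dimension factor relating Frobenius and operator norms), the triangle inequality gives \(\mathfrak r_0(\Phi)<\infty\).

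\emph{Realization of \(\Phi\in\cS(R)\).} Use the disturbance-reconstructing controller
\[
    \hat w_t=x_{t+1}-Ax_t-Bu_t,
    \qquad
    u_t=\sum_{i=1}^{t-1}\Phi_u^{[i]}\hat w_{t-i}.
\]
On the nominal loop \(\hat w_t=w_t\) exactly, and induction on \(t\) using \eqref{eq:sls-recursion} shows \(x_t=\sum_i\Phi_x^{[i]}w_{t-i}\). This controller has unbounded memory, so it is in general infinite-dimensional. For internal stability, inject bounded perturbations at the plant input, the measurement, and the controller's internal signals. Each closed-loop signal is then a convolution of these perturbations with kernels built from \(\Phi_x\), \(\Phi_u\), and the plant shift. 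The task is to show that all these kernels lie in \(\ell_1\), using \(\sum_i\fnorm{\Phi^{[i]}}\le R+\sum_i\fnorm{\Phi_0^{[i]}}<\infty\), in the spirit of the standard SLS realization theorem \cite{wang2019sls}. I expect this step to be the main obstacle: one must specify which perturbation channels define internal stability for an infinite-dimensional realization and check each transfer operator. If the bookkeeping becomes delicate, I would first try the alternative realization centered on the tracker: \(u_t=-K_0x_t+\sum_i(\Phi_u^{[i]}+K_0\Phi_x^{[i]})\hat w_{t-i}\)-type corrections, so that the remaining kernels are governed by \(\Lambda(K_0)\) and the summable deviation \(\Phi-\Phi_0\).

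\emph{Static feedback radius.} For \(K\in\cK_{\kappa,\gamma}\) write \(F_K=A-BK\). Then \(\Phi_x^{[n+1]}=F_K^n\) and \(\Phi_u^{[n+1]}=-KF_K^n\). Since \(F_K-F_0=B(K_0-K)\), telescoping gives
\[
    F_K^n-F_0^n=\sum_{r+s=n-1}F_0^rB(K_0-K)F_K^s .
\]
Bounding \(\fnorm{XYZ}\le\opnorm X\fnorm Y\opnorm Z\) and summing over \(n\) yields
\[
    \sum_n\fnorm{F_K^n-F_0^n}\le\Lambda_{0,B}\,\Delta_{\kappa,\gamma,0}\,\kappa^2/\gamma .
\]
For the stacked block, split
\[
    \Phi^{[n+1]}-\Phi_0^{[n+1]}
    =\begin{bmatrix}I\\-K_0\end{bmatrix}(F_K^n-F_0^n)
     +\begin{bmatrix}0\\(K_0-K)F_K^n\end{bmatrix}.
\]
The first term has Frobenius norm at most \(\chi_0\fnorm{F_K^n-F_0^n}\). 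The second has Frobenius norm at most \(\Delta_{\kappa,\gamma,0}\kappa^2(1-\gamma)^n\), which sums to \(\kappa^2\Delta_{\kappa,\gamma,0}/\gamma\). Adding the two contributions gives exactly \(R_{\kappa,\gamma}\) in \eqref{eq:sls-static-radius}. Finally, \(\Delta_{\kappa,\gamma,0}\le\sqrt{d}\,(\kappa+\opnorm{K_0})\), where \(d=\min\{d_u,d_x\}\) bounds the rank of \(K-K_0\). Combined with \eqref{eq:conditioned-tracker}, this gives \(R_{\kappa,\gamma}=O_{\rm sys}(\gamma^{-1})\).
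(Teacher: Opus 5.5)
Your treatment of the first claim and of the static-feedback radius is correct and matches the paper's proof. In the first claim, the Schur closed-loop matrix gives summable responses that satisfy \eqref{eq:sls-recursion}. In the radius bound, the telescoping identity $F_K^n-F_0^n=\sum_{r+s=n-1}F_0^rB(K_0-K)F_K^s$ and the same stacked split yield exactly $R_{\kappa,\gamma}$.

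The gap is in the converse. The realization you propose is not internally stable when $A$ is not Schur, so the step ``all these kernels lie in $\ell_1$'' fails. Inject $d_u$, $d_x$, and $d$ at the plant input, the measurement, and the controller's $\hat w$ node. Because $\hat w_t=x_{t+1}-Ax_t-Bu_t$ subtracts the controller's own action through the plant model, you get $\hat w=w+Bd_u+(zI-A)d_x$ regardless of the controller's output. The controller therefore drives the plant in open loop. The perturbation $d$, which lies on a channel you said you would include, reaches the state through $(zI-A)^{-1}B\Phi_u=\Phi_x-(zI-A)^{-1}$. Its kernel $\Phi_x^{[i]}-A^{i-1}$ is not summable for unstable $A$. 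For example, take $A=2$, $B=1$, $K_0=2$, and $\Phi=\Phi_0$. Your controller becomes $u_t=2u_{t-1}-2x_t+4x_{t-1}$. Its transfer function is the stabilizing gain $-2$, but the realization hides an unstable mode at $z=2$: an impulse in $d$ makes $x$ grow like $2^t$. Restricting perturbations to the plant input and measurement would not detect this, because your controller has the same transfer function $\Phi_u\Phi_x^{-1}$ as the SLS controller. That restriction certifies the input--output map, not the realization.

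The missing idea is to form the disturbance estimate from the response itself rather than from the plant model, as the paper does: $\hat w=x-\hat x$, $\hat x=(z\Phi_x-I)\hat w$, $u=z\Phi_u\hat w$. Eliminating signals with $(zI-A)\Phi_x-B\Phi_u=I$ gives $\hat w=z^{-1}\bigl[(zI-A)(d_x+d)+Bd_u+w\bigr]$, an FIR function of all exogenous and injected signals. Every other internal signal is this passed through $\Phi_x$, $\Phi_u$, $z\Phi_x-I$, or $z\Phi_u$, up to direct feedthrough of the injected signals, and all of these are absolutely summable. This is \cite[Theorem~1]{wang2019sls}. Your tracker-centered fallback $u=-K_0x+(\Phi_u+K_0\Phi_x)\hat w$ would also work: the loop then closes through $(zI-F_0)^{-1}$ with an absolutely summable Youla-type parameter, although this is not the SLS structure named in the statement. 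In the proposal, however, it is only a contingency and is not verified, so the converse is not yet established.
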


The inclusion
\(\cK_{\kappa,\gamma}\subseteq\cS(R_{\kappa,\gamma})\) calibrates the
response-ball radius against the stable linear state-feedback class.  The
radius controls the summed deviation from the tracker, not how that deviation
is distributed across delays.  On the scalar plant below, all centered
response mass can occur after any prescribed delay.  Consequently, no memory
length chosen from \(R\) alone can approximate the class uniformly.

\begin{proposition}[No uniform response tail]
\label{prop:sls-no-tail}
On a fixed scalar plant, for every \(R>0\) and \(h\ge0\),
\begin{equation}
    \sup_{\Phi\in\cS(R)}
    \sum_{i>h}\fnorm{\Phi^{[i]}-\Phi_0^{[i]}}
    =R.
    \label{eq:sls-no-tail}
\end{equation}
Hence \(\cS(R)\) admits no uniform tail envelope tending to zero.
\end{proposition}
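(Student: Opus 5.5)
The plan is to prove the two inequalities in \eqref{eq:sls-no-tail} separately. The upper bound is immediate. For the lower bound I will construct, for every \(h\), an explicit member of \(\cS(R)\) whose centered response mass lies entirely after delay \(h\). I take the scalar plant \(x_{t+1}=u_t+w_t\), so \(A=0\) and \(B=1\); this is the plant already used in \cref{thm:convex-lower}. For the tracker I take \(K_0=0\), so that \(F_0=0\). Then \(\Lambda(K_0)=\Lambda_B(K_0)=1<\infty\), and the baseline response is \(\Phi_{0,x}^{[1]}=1\), with \(\Phi_{0,x}^{[i]}=0\) for \(i\ge2\) and \(\Phi_{0,u}^{[i]}=0\) for all \(i\). (Any stabilizing tracker would work with more bookkeeping, but this choice makes the baseline trivial.) On this plant the system-level constraint \eqref{eq:sls-recursion} reads \(\Phi_x^{[1]}=1\) and \(\Phi_x^{[i+1]}=\Phi_u^{[i]}\).

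\emph{Upper bound.} For every \(\Phi\in\cS(R)\), the tail sum \(\sum_{i>h}\fnorm{\Phi^{[i]}-\Phi_0^{[i]}}\) is at most the full sum \(\mathfrak r_0(\Phi)\), which is at most \(R\). Hence the supremum in \eqref{eq:sls-no-tail} is at most \(R\).

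\emph{Lower bound.} Fix \(h\ge0\) and put \(m=h+1\). Define \(\Phi\) by \(\Phi_u^{[m]}=R/2\), with \(\Phi_u^{[i]}=0\) for \(i\ne m\), and let the state blocks be determined by the recursion.
\begin{itemize}
\item \emph{Feasibility.} The recursion gives \(\Phi_x^{[1]}=1\), \(\Phi_x^{[m+1]}=R/2\), and \(\Phi_x^{[i]}=0\) otherwise, so \eqref{eq:sls-recursion} holds by construction.
\item \emph{Centered blocks.} The deviation \(\Phi^{[i]}-\Phi_0^{[i]}\) equals \((0,R/2)\) at \(i=m\), equals \((R/2,0)\) at \(i=m+1\), and vanishes otherwise. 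In particular it vanishes at \(i=1\), since \(m\ge2\) would be needed for \(\Phi_x^{[1]}\) to move and in any case \(\Phi_x^{[1]}=1=\Phi_{0,x}^{[1]}\).
\item \emph{Mass.} Since Frobenius norms of these \(2\times1\) blocks are Euclidean lengths, \(\mathfrak r_0(\Phi)=R/2+R/2=R\). Thus \(\Phi\in\cS(R)\).
\item \emph{Tail.} Both nonzero deviations occur at indices \(m,m+1>h\), so the tail sum after \(h\) equals \(R\).
\end{itemize}
The supremum is therefore attained and equals \(R\).

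This \(\Phi\) is realized by the causal dynamical controller \(u_t=(R/2)\,w_{t-m}\), a delayed finite-impulse-response controller. It uses only revealed disturbances, so it is a legitimate zero-initialized internally stable controller. This is the same family that the lower bound of the introduction refers to.

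\emph{No tail envelope.} Suppose some function \(\psi(h)\to0\) satisfied \(\sum_{i>h}\fnorm{\Phi^{[i]}-\Phi_0^{[i]}}\le\psi(h)\) for all \(\Phi\in\cS(R)\). Choosing \(h\) with \(\psi(h)<R\) contradicts \eqref{eq:sls-no-tail}.

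I expect no real obstacle. The only points needing care are that the constraint \(\Phi_x^{[1]}=I\) is never violated, which holds because the perturbation starts at index \(m\ge1\) and only affects state blocks from \(m+1\) on, and that the mass bookkeeping counts both the input block and the induced state block.
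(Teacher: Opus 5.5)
Your proposal is correct and matches the paper's proof essentially verbatim: the same scalar plant \(A=0\), \(B=1\) with \(K_0=0\), the same delayed response with input mass \(R/2\) at a delay \(j>h\) and induced state mass \(R/2\) at \(j+1\), the trivial upper bound via \(\mathfrak r_0(\Phi)\le R\), and the realization \(u_t=(R/2)\,w_{t-j}\). One small slip: when \(h=0\) (so \(m=1\)) the centered block at \(i=1\) is \((0,R/2)\), not zero (only its state component is unchanged), but that block lies in the tail \(i>0\), so the conclusion is unaffected.
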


\Cref{prop:sls-no-tail} shows why the guarantee of Simchowitz, Singh, and
Hazan does not cover the full class \(\cS(R)\)
\cite{simchowitz2020improper}.  Their theorem assumes a common tail envelope
\(\psi(h)\to0\) for all comparators and chooses the memory length from that
envelope.  No such envelope exists for \(\cS(R)\): although each fixed
finite-dimensional stable controller has a decaying tail, the class permits
its centered response mass to occur at an arbitrarily large delay.

At horizon \(T\), response blocks after delay \(T-1\) cannot affect any
incurred cost.  We therefore optimize all blocks \(1,\ldots,T-1\), with no
tail-truncation error.  Set \(H=T-1\) and write
\[
    Q^{[i]}=\Phi^{[i]}-\Phi_0^{[i]},
    \qquad 1\le i\le H.
\]
The centered response recursion defines a linear subspace \(\cV_H\).  To allow
response mass at any of the \(H\) delays without paying a \(\sqrt H\)
factor, choose
\begin{equation}
    p=1+\frac1{\log(eH)},
    \qquad
    \norm Q_{p,2}
    =\left(\sum_{i=1}^H\fnorm{Q^{[i]}}^p\right)^{1/p}.
    \label{eq:sls-p-geometry}
\end{equation}
Then
\begin{equation}
    \sum_{i=1}^H\fnorm{Q^{[i]}}
    \le e\norm Q_{p,2}.
    \label{eq:sls-p-to-one}
\end{equation}
This mixed block norm is within a constant factor of the block-\(\ell_1\)
norm, while \(\frac12\norm Q_{p,2}^2\) is
\(1/\log(eH)\)-strongly convex with respect to \(\norm{\cdot}_{p,2}\).
Consequently, the mirror-descent bound contains
\(1/\sqrt{p-1}=\sqrt{\log(eH)}\), while
\eqref{eq:sls-p-to-one} prevents a \(\sqrt H\) dependence.
Mirror descent on
\[
    \cQ_H(R)=\{Q\in\cV_H:\norm Q_{p,2}\le R\}
\]
therefore pays only \(\sqrt{\log T}\) for allowing the response to occur at
an arbitrary delay.  Every \(\Phi\in\cS(R)\) has its first \(H\) centered
blocks in \(\cQ_H(R)\).  For \(Q\in\cQ_H(R)\), define the exact
counterfactual prefix
\begin{equation}
    \bar y_t(Q)
    =\sum_{i=1}^{t-1}\bigl(\Phi_0^{[i]}+Q^{[i]}\bigr)w_{t-i},
    \qquad
    f_t(Q)=c_t\bigl(\bar y_t(Q)\bigr).
    \label{eq:sls-prefix-loss}
\end{equation}
On round \(t\), mirror descent updates \(Q_t\) using \(f_t\), the learner forms
the reference \(\bar y_t(Q_t)\), and the physical controller applies
\eqref{eq:tracking-controller}.  The first \(T-1\) blocks represent every
comparator in \(\cS(R)\) exactly through round \(T\), so the regret bound
contains no tail-approximation term.  The selected prefix need not extend to
an infinite stable response: only its first \(T-1\) blocks enter the
finite-horizon reference, while the fixed stable tracker controls the physical
tracking error.

\begin{theorem}[Convex regret over the system-level response ball]
\label{thm:sls-convex}
For \(T\ge2\) and globally \(L\)-Lipschitz convex costs, exact-prefix mirror
descent with counterfactual tracking satisfies
\begin{equation}
    \Reg_T(\cS(R))
    \le
    eLWR
    \sqrt{
      T\bigl(1+2\chi_0\Lambda_x(K_0)\bigr)\log(eT)
    }.
    \label{eq:sls-upper}
\end{equation}
For \((G,D_R)\)-regular costs, the same bound holds with \(L=GD_R\), where
\begin{equation}
    D_R
    =W\left[
      S_0+eR\bigl(1+2\chi_0\Lambda_x(K_0)\bigr)
    \right],
    \qquad
    S_0=\sum_{i\ge1}\fnorm{\Phi_0^{[i]}}.
    \label{eq:sls-regular-radius}
\end{equation}
\end{theorem}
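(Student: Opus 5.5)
We combine \cref{thm:generic-reduction} with a standard mirror-descent bound in the \((p,2)\) block geometry. Fix a comparator \(\Phi\in\cS(R)\) and let \(Q^\star\) be its first \(H=T-1\) centered blocks. Then \(Q^\star\in\cV_H\), and since the \(\ell_p\) block norm is dominated by the \(\ell_1\) block norm for \(p\ge1\), \(\norm{Q^\star}_{p,2}\le\mathfrak r_0(\Phi)\le R\); so \(Q^\star\in\cQ_H(R)\). By \eqref{eq:sls-response-trajectory}, \(\bar y_t(Q^\star)=y_t^\Phi\) exactly for every \(t\le T\). The reference regret \(R_T(\Phi)\) is therefore the online-convex-optimization regret of mirror descent on the losses \(f_t\) over \(\cQ_H(R)\), compared against \(Q^\star\).

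\emph{Reference regret.} Each \(f_t\) is convex, because \(\bar y_t(Q)\) is affine in \(Q\). For the dual-norm gradient bound, note that \(\bar y_t(Q)-\bar y_t(Q')=\sum_i(Q-Q')^{[i]}w_{t-i}\) has norm at most \(W\sum_i\fnorm{(Q-Q')^{[i]}}\le eW\norm{Q-Q'}_{p,2}\) by \eqref{eq:sls-p-to-one}. Hence \(f_t\) is \(eLW\)-Lipschitz in \(\norm{\cdot}_{p,2}\), and subgradients have dual \((q,2)\)-norm at most \(eLW\). The regularizer \(\psi(Q)=\frac12\norm Q_{p,2}^2\) is \((p-1)\)-strongly convex, with \(p-1=1/\log(eH)\). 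Its range over \(\cQ_H(R)\) is at most \(R^2/2\), and restriction to the subspace \(\cV_H\) preserves strong convexity. Mirror descent (or lazy FTRL) with step \(\eta\) then gives
\[
R_T(\Phi)\le\frac{R^2}{2\eta}+\frac{\eta\log(eH)}{2}\sum_t\norm{g_t}_*^2 .
\]

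\emph{Reference defect.} The reference is \(\bar y_t=\bar y_t(Q_t)\). Since every fixed \(Q\) generates a trajectory satisfying the dynamics through the recursion \eqref{eq:sls-recursion}, the defect is \(\delta_{t+1}=\bar x_{t+1}(Q_t)-\bar x_{t+1}(Q_{t+1})\), up to checking that \(\cV_H\) enforces the centered recursion \(Q_x^{[i+1]}=AQ_x^{[i]}+BQ_u^{[i]}\) with \(Q_x^{[1]}=0\). This gives \(\norm{\delta_{t+1}}\le eW\norm{Q_t-Q_{t+1}}_{p,2}\). The standard stability estimate for mirror descent, from strong convexity and the projection, is \(\norm{Q_{t+1}-Q_t}_{p,2}\le\eta\norm{g_t}_*/(p-1)=\eta\, eLW\log(eH)\). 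Summing over \(t\), the tracking term in \cref{thm:generic-reduction} is at most \(L\Lambda\cdot e^2LW^2\eta\log(eH)\,T\).

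\emph{Main obstacle: getting the constant \(1+2\chi_0\Lambda_x(K_0)\) instead of \(\Lambda(K_0)B_+\).} I would not use \(\Lambda(K_0)\) directly. Instead I would bound the tracking error through \(e_t\) and \(\Lambda_x\), then recover the input error from \(u_t-\bar u_t=-K_0e_t\), which gives the factor \(\chi_0\). The factor \(2\) and the absence of \(B_+\) suggest measuring the defect directly as a state difference \(\bar x_{t+1}(Q_t)-\bar x_{t+1}(Q_{t+1})\), costing \(eW\norm{\Delta Q}\). The \(2\) would absorb a looser bound: either the triangle inequality in the stability estimate, or using \(\norm{Q_{t+1}-Q_t}\le2\eta\norm{g_t}_*\log(eH)\).

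\emph{Combining the two terms.} The bound becomes
\[
\frac{R^2}{2\eta}+\frac{\eta}{2}e^2L^2W^2\log(eT)\,T\bigl(1+2\chi_0\Lambda_x\bigr),
\]
and optimizing \(\eta\) gives \eqref{eq:sls-upper}.

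\emph{Regular costs.} It suffices to check that all relevant points lie in the radius-\(D_R\) ball. Comparator and reference points have norm at most \(W(S_0+eR)\). The physical points add the tracking deviation, which the same convolution bound controls pointwise: each \(\norm{e_t}\) is at most \(\Lambda_x\sup\norm{\delta}\), with \(\norm{\delta}\le 2eWR\) since both \(Q\)'s lie in the ball. This yields \(D_R\). The regular case then reduces to the Lipschitz case with \(L=GD_R\), applying the Lipschitz argument only on that ball.
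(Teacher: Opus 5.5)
Your proposal is correct and follows the paper's proof essentially step for step: the exact-prefix embedding of each comparator into \(\cQ_H(R)\), mirror descent with the \((p-1)\)-strongly convex regularizer, the movement bound \(\norm{Q_{t+1}-Q_t}_{p,2}\le\eta\norm{g_t}_*/(p-1)\), the defect measured directly as the state difference \(\bar x_{t+1}(Q_t)-\bar x_{t+1}(Q_{t+1})\) (so no \(B_+\) appears), the factor \(\chi_0\Lambda_x(K_0)\) obtained from \(u_t-\bar u_t=-K_0e_t\), and the same pointwise radius argument for \(D_R\). The one misstep is your guess that the factor \(2\) in \(1+2\chi_0\Lambda_x(K_0)\) absorbs a looser bound: it arises exactly, as your own combined display already shows, because the mirror-descent gradient term carries \(\eta/2\) while the tracking term carries \(\eta\); separately, the degenerate case \(R=0\), where your optimized step is undefined, is trivial because \(\cQ_H(0)=\{0\}\) gives zero regret.
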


The \(\sqrt{\log T}\) factor is not an artifact of the mirror-descent
analysis.  Delayed responses force the same factor on a scalar plant.

\begin{theorem}[Matching delayed-response lower bound]
\label{thm:sls-convex-lower}
There is a universal \(c>0\) with the following property.  Fix
\(T\ge2\) and \(L,W,R>0\).  On the scalar plant
\(x_{t+1}=u_t+w_t\), with \(K_0=0\), there is an oblivious distribution
over disturbances and globally \(L\)-Lipschitz convex costs.  Almost surely
\(\abs{w_t}\le W\), and every causal randomized controller satisfies
\begin{equation}
    \E\Reg_T(\cS(R))
    \ge cLWR\sqrt{T\log(eT)}.
    \label{eq:sls-lower}
\end{equation}
At the natural regularity radius \(D\asymp W(1+R)\), the corresponding
fixed-plant minimax rate for \((G,D)\)-regular costs is
\begin{equation}
    \Theta\!\left(
      GW^2R(1+R)\sqrt{T\log(eT)}
    \right).
    \label{eq:sls-regular-minimax}
\end{equation}
\end{theorem}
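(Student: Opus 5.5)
The plan is to use linear costs driven by independent random signs, and to compete with delayed finite-impulse-response controllers. The learner cannot predict the sign of the current cost, so its expected cost is zero. The best delayed controller in hindsight exploits a spurious correlation between the cost signs and the disturbance at one of \(T\) possible delays. Maximizing over \(T\) nearly orthogonal correlations of size \(\sqrt T\) each produces \(\sqrt{T\log T}\).

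\emph{The comparators.} On \(x_{t+1}=u_t+w_t\) with \(K_0=0\), we have \(F_0=0\). Hence \(\Phi_0^{[1]}=(1,0)\) and \(\Phi_0^{[i]}=0\) for \(i\ge2\). For a delay \(k\in\{1,\dots,\lfloor T/2\rfloor\}\) and \(\abs a\le R/2\), take \(\Phi_u^{[k]}=a\) and all other \(\Phi_u^{[i]}=0\). By \eqref{eq:sls-recursion}, \(\Phi_x^{[k+1]}=a\) and the other centered state blocks vanish. Thus \(\mathfrak r_0(\Phi)=2\abs a\le R\), so \(\Phi\in\cS(R)\). By \eqref{eq:sls-response-trajectory}, this controller plays \(u_t^\Phi=a\,w_{t-k}\).

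\emph{The construction and the regret reduction.} Let \(w_t=Ws_t\) with \(s_t\) i.i.d.\ Rademacher. Let \(c_t(x,u)=L\varepsilon_t u\), where the \(\varepsilon_t\) are i.i.d.\ Rademacher and independent of \((s_t)\). These costs are convex, globally \(L\)-Lipschitz, and oblivious. Because \(\varepsilon_t\) is independent of everything revealed before round \(t\) and of the learner's private randomness, \(\E c_t(y_t)=0\) for every causal randomized controller. Optimizing the sign of \(a\) gives
\[
\E\Reg_T(\cS(R))\;\ge\;\tfrac{R}{2}LW\,\E\max_{k\le T/2}\abs{Z_k},
\qquad Z_k=\sum_{t=k+1}^T\varepsilon_t s_{t-k}.
\]
It therefore remains to show \(\E\max_k\abs{Z_k}\ge c\sqrt{T\log(eT)}\). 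For small \(T\), the single term \(\E\abs{Z_1}\gtrsim\sqrt T\) already suffices, so assume \(T\) is large.

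\emph{Main step: the maximum of the correlations.} Condition on \(s\). Then \((Z_k)\) is a Bernoulli process \(\ip{\varepsilon}{v_k}\) with \(\norm{v_k}_\infty\le1\). I will apply Talagrand's Sudakov minoration for Bernoulli processes. If \(\norm{v_k-v_j}_2\ge\alpha\) for all \(k\ne j\) in an index set of size \(N\), then
\[
\E\sup_k Z_k\;\ge\; c\min\{\alpha\sqrt{\log N},\,\alpha^2\}.
\]
With \(\alpha\asymp\sqrt T\) and \(N=\lfloor T/2\rfloor\), the right-hand side is \(\asymp\sqrt{T\log T}\). The bound holds for \(\sup_k Z_k\), which is at most \(\max_k\abs{Z_k}\).

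\emph{Expected obstacle: the separation event.} The remaining work is to show that, with probability at least \(1/2\) over \(s\), every pair of delays is well separated:
\[
\norm{v_k-v_j}_2^2=\sum_t(s_{t-k}-s_{t-j})^2\ge T \quad\text{for all } k\ne j.
\]
Each summand equals \(2-2s_{t-k}s_{t-j}\), and the products \(s_{t-k}s_{t-j}\), indexed by \(t\), are pairwise uncorrelated but not independent. Splitting the indices \(t\) into a bounded number of residue classes modulo \(2\abs{k-j}\) makes the products within each class independent. Hoeffding's inequality applied to each class, followed by a union bound over the \(O(T^2)\) pairs, then yields the event. Restricting to \(k\le T/2\) guarantees that each \(v_k\) has at least \(T/2\) active coordinates.

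\emph{The regular-cost statement.} The same linear costs with slope \(GD\) in place of \(L\) have gradient norm \(GD\) everywhere, so they are \((G,D)\)-regular. At \(D\asymp W(1+R)\) the argument above gives the lower bound \(c\,GW^2R(1+R)\sqrt{T\log(eT)}\). For the matching upper bound, \cref{thm:sls-convex} with \(K_0=0\) gives \(\Lambda_x=1\), \(\chi_0=1\), and \(S_0=1\). Then \(D_R\asymp W(1+R)\), and the stated \(\Theta\) rate follows.
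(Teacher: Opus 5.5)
Your proposal is correct and is essentially the paper's proof. It uses the same oblivious construction \(w_t=W\xi_t\), \(c_t(x,u)=L\varepsilon_tu\), the same delayed finite-impulse-response comparators \(u_t=\pm(R/2)w_{t-j}\) in \(\cS(R)\), and the same reduction of \(\E\max_j\abs{C_j}\) to pairwise separation of shifted sign vectors (with probability at least \(1/2\)) followed by a Sudakov-type minoration for Rademacher processes; the regular-cost \(\Theta\) rate also comes from \(D_R=W(1+3eR)\) exactly as in the paper.

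The only real difference is that you cite Talagrand's Sudakov minoration for Bernoulli processes, \(c\min\{\alpha\sqrt{\log N},\alpha^2/b\}\), where the paper proves the special case it needs (\cref{sls:lem-bernoulli-sudakov}) from Gaussian Sudakov by a Lindeberg comparison of a smoothed maximum. There are also two harmless slips. First, delays up to \(T/2\) give only \(\norm{v_k-v_j}^2\ge cT\), not \(\ge T\). Second, your independence split must use two classes (residues modulo \(2\abs{k-j}\) in \([0,\abs{k-j})\) versus \([\abs{k-j},2\abs{k-j})\)) rather than \(2\abs{k-j}\) separate ones. In fact, the products \(s_as_{a+d}\) over a contiguous range of \(a\) are already mutually independent, so Hoeffding applies directly.
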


For every \(R>0\), the class \(\cS(R)\) contains nonstatic delayed
finite-impulse-response controllers.  Moreover, whenever
\(R\ge R_{\kappa,\gamma}\), it contains the entire stable linear
state-feedback class \(\cK_{\kappa,\gamma}\).  Taking
\(R=R_{\kappa,\gamma}=O_{\rm sys}(\gamma^{-1})\) in
\eqref{eq:sls-upper} gives
\(O_{\rm sys}(LW\gamma^{-1}\sqrt{T\log(eT)})\) regret over this larger
response class.  The delayed-response lower bound shows that the additional
\(\sqrt{\log T}\) factor is unavoidable for the full response ball.  The
exact-prefix problem has
\(O(T(d_x+d_u)d_x)\) variables; it is a finite-horizon convex program, but its
per-round running time grows with \(T\).

These results assume only convexity.  We next ask when strong convexity can
reduce regret from order \(\sqrt T\) to order \(\log T\), and when the size of
the response class prevents such an improvement.

\section{Strongly convex costs}
\label{sec:strong}

The Gibbs reference is an average of counterfactual state--input pairs.
Strong convexity makes the cost of this average smaller than the average
counterfactual cost by a term proportional to their variance.  Let \(p_t\) be
the Gibbs distribution in
\eqref{eq:gibbs-posterior}, let \(\bar y_t\) be its barycenter, and define
\begin{equation}
    V_t=\E_{\pi\sim p_t}\norm{y_t^\pi-\bar y_t}^2.
    \label{eq:trajectory-variance}
\end{equation}
Strong convexity strengthens Jensen's inequality by the variance term
\begin{equation}
    c_t(\bar y_t)
    \le
    \E_{\pi\sim p_t}c_t(y_t^\pi)-\frac\mu2V_t.
    \label{eq:strong-jensen}
\end{equation}
The variance-sensitive exponential-weights inequality contributes at most
\(C\eta L^2V_t\) on round \(t\).  The update from \(p_t\) to \(p_{t+1}\)
changes the next-state barycenter by at most \(C\eta B_+LV_t\); the tracker
converts this change into at most
\(C\eta L^2\Lambda(K_0)B_+V_t\) excess cost.  Hence the two positive terms
total at most \(C\eta L^2\Gamma_{\rm tr}V_t\).  If
\(\eta=O(\mu/(L^2\Gamma_{\rm tr}))\), this sum is no larger than the
strong-convexity decrease \(\mu V_t/2\) in
\eqref{eq:strong-jensen}.

Earlier logarithmic-regret results require additional control structure:
well-conditioned stochastic excitation
\cite{agarwal2019logarithmic}, known quadratic costs
\cite{foster2020logarithmic}, or a finite response parameterization
\cite{simchowitz2020making}.  The theorem below instead applies to any
simulatable policy class satisfying the stated measurability and integrability
conditions.  As in regret--variance bounds for online learning
\cite{vanderhoeven2022}, trajectory variance controls the
exponential-weighting term; here the same variance also controls the excess
cost of tracking the changing Gibbs reference.

\begin{theorem}[Strongly convex PAC-Bayes counterfactual tracking]
\label{thm:pac-strong}
Let \((\Pi,\nu)\) satisfy the measurability and integrability conditions
preceding \cref{thm:pac-convex}, and assume \(e_1=0\) and
\(\Lambda(K_0)<\infty\).  For every \(t\), suppose \(c_t\) is
\(L\)-Lipschitz and \(\mu\)-strongly convex on
\[
    \conv\!\left(
      \{y_t\}\cup\{y_t^\pi:\pi\in\Pi\}
    \right).
\]
There is a universal \(c>0\) such that every \(\eta\) satisfying
\begin{equation}
    \eta\le c\frac{\mu}{L^2\Gamma_{\rm tr}},
    \label{eq:strong-step-size}
\end{equation}
yields, simultaneously for every \(\rho\ll\nu\),
\begin{equation}
    \sum_{t=1}^T c_t(y_t)
    -\E_{\pi\sim\rho}\sum_{t=1}^T c_t(y_t^\pi)
    \le
    \frac{\KL(\rho\|\nu)}{\eta}.
    \label{eq:pac-strong}
\end{equation}
In particular, with the uniform prior, the algorithm has regret at most
\begin{equation}
    C\frac{L^2\Gamma_{\rm tr}}{\mu}\log N.
    \label{eq:finite-strong}
\end{equation}
When \(\Gamma_{\rm tr}=O(1)\) and \(\log N=O(T)\), the
\(L^2\mu^{-1}\log N\) dependence matches the fixed-scalar-plant lower bound.
\end{theorem}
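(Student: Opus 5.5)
The plan is to split the physical regret as in \eqref{eq:intro-decomposition}, into reference regret plus tracking excess, and then show that strong convexity pays for every second-order term round by round. Let \(\ell_t(\pi)=c_t(y_t^\pi)\), \(m_t=\E_{p_t}\ell_t\), and \(\Phi_t=-\frac1\eta\log\int e^{-\eta\sum_{s<t}\ell_s}\,d\nu\). The standard Donsker--Varadhan argument gives, for every \(\rho\ll\nu\),
\[
\sum_{t}\Bigl[-\tfrac1\eta\log\E_{p_t}e^{-\eta\ell_t}\Bigr]\le\E_\rho\sum_t\ell_t+\frac{\KL(\rho\|\nu)}{\eta}.
\]
So it suffices to prove, for each round \(t\), the per-round inequality
\[
c_t(\bar y_t)+(\text{tracking excess})_t\le-\tfrac1\eta\log\E_{p_t}e^{-\eta\ell_t}.
\]

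\textbf{Step 1: exponential-weights mixability gap.} Write \(-\frac1\eta\log\E_{p_t}e^{-\eta\ell_t}=m_t-\frac1\eta\log\E_{p_t}e^{-\eta(\ell_t-m_t)}\). Next, \(\abs{\ell_t(\pi)-c_t(\bar y_t)}\le L\norm{y_t^\pi-\bar y_t}\), because the barycenter lies in the convex hull, where \(c_t\) is \(L\)-Lipschitz. Center \(\ell_t\) at \(c_t(\bar y_t)\) rather than at \(m_t\). Using \(e^{-x}\le1-x+x^2\) for \(x\ge -1\), together with \(\eta L\Delta\le c\) (which follows from the step-size condition once \(\mu\Delta\lesssim L\) is checked; otherwise use \(e^{-x}\le 1-x+x^2e^{|x|}\)), I get
\[
m_t-\tfrac1\eta\log\E_{p_t}e^{-\eta\ell_t}\le C\eta L^2V_t .
\]
Combined with \eqref{eq:strong-jensen}, this gives
\[
c_t(\bar y_t)\le-\tfrac1\eta\log\E_{p_t}e^{-\eta\ell_t}+C\eta L^2V_t-\tfrac\mu2V_t.
\]

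\textbf{Step 2: defect controlled by variance.} By \eqref{eq:gibbs-defect-identity}, \(\delta_{t+1}=\int x_{t+1}^\pi(p_t-p_{t+1})(d\pi)\). Since \(p_{t+1}\propto p_te^{-\eta\ell_t}\), this is a covariance:
\[
\delta_{t+1}=-\frac{\operatorname{Cov}_{p_t}\bigl(x_{t+1}^\pi,e^{-\eta\ell_t}\bigr)}{\E_{p_t}e^{-\eta\ell_t}}.
\]
Now \(x_{t+1}^\pi-\bar x_{t+1}^{(p_t)}=[A\ B](y_t^\pi-\bar y_t)\), because all policies share \(w_t\). Also \(\abs{e^{-\eta\ell_t(\pi)}-e^{-\eta c_t(\bar y_t)}}\le C\eta L\norm{y_t^\pi-\bar y_t}e^{-\eta c_t(\bar y_t)}\) on the range where \(\eta L\Delta\le c\). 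Cauchy--Schwarz then gives \(\norm{\delta_{t+1}}\le C\eta B_+LV_t\). The tracking excess follows from \eqref{eq:tracking-cost}, i.e.\ the convolution bound \eqref{eq:tracking-convolution} with \(L\)-Lipschitzness on the hull containing \(y_t\) and \(\bar y_t\). This bounds the total excess by \(L\Lambda(K_0)\sum_t\norm{\delta_{t+1}}\le C\eta L^2\Lambda(K_0)B_+\sum_tV_t\). The convolution only spreads the defects over time, so the bound is amortized rather than per-round, but the sum is all we need.

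\textbf{Step 3: absorb and conclude.} Summing Steps 1--2, the positive terms total \(C\eta L^2(1+\Lambda(K_0)B_+)\sum_tV_t=C\eta L^2\Gamma_{\rm tr}\sum_tV_t\). Under \eqref{eq:strong-step-size} with \(c\) small, this is at most \(\frac\mu2\sum_tV_t\), so it is cancelled by the strong-Jensen decrease. Telescoping with the Donsker--Varadhan bound yields \eqref{eq:pac-strong}. For \eqref{eq:finite-strong}, take the uniform prior, \(\rho=\delta_\pi\), so \(\KL=\log N\), and \(\eta=c\mu/(L^2\Gamma_{\rm tr})\). The lower-bound remark is a separate statement.

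\textbf{Main obstacle.} The delicate step is the range condition in Steps 1--2. Quadratic bounds on the exponential require \(\eta L\norm{y_t^\pi-\bar y_t}\lesssim1\), but \eqref{eq:strong-step-size} does not mention \(\Delta\). I would first use strong convexity plus \(L\)-Lipschitzness on the hull to show the hull has diameter \(\le 4L/\mu\). Indeed, for \(y,y'\) in the hull, \(\frac\mu2\norm{y-y'}^2\le c_t(y')-c_t(y)-\langle g,y'-y\rangle\le 2L\norm{y-y'}\). Then \(\eta L\norm{y_t^\pi-\bar y_t}\le 4c/\Gamma_{\rm tr}\le 4c\), which makes the constants universal.
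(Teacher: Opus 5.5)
Your proposal is correct and is essentially the paper's proof. Strong Jensen supplies the $-\frac{\mu}{2}V_t$ decrease, the variance-sensitive exponential-weights step costs $C\eta L^2V_t$, the tracker-filtered defect costs $C\eta L^2\Lambda(K_0)B_+V_t$, and the diameter bound $4L/\mu$ turns the step-size condition into the required range condition.

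You differ in one technical step: you bound the defect through the exact identity $\delta_{t+1}=-\operatorname{Cov}_{p_t}(x_{t+1}^\pi,e^{-\eta\ell_t})/\E_{p_t}e^{-\eta\ell_t}$. The paper instead uses the interpolation $p_{t,s}$ and its tilt-derivative estimate; your route reaches the same $C\eta B_+LV_t$ bound more directly. One small caveat on the diameter bound: your first-order argument tacitly needs $\abs{\ip{g}{y'-y}}\le L\norm{y'-y}$, which can fail for a subgradient at a boundary point of the hull. The paper's midpoint argument avoids subgradient norms altogether.
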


If the prior is nonatomic, an individual policy corresponds to a point mass
and cannot be used directly in \cref{thm:pac-strong}.  For the stable linear
state-feedback class, we therefore construct a finite Frobenius-norm cover of
the gain matrices and run Gibbs tracking on that cover.  The logarithm of the
cover size scales with \(d_K=d_ud_x\), rather than with the dimension of a
truncated disturbance response.

The proof chooses an \(\varepsilon\)-net of \(\cK_{\kappa,\gamma}\), applies
the finite-class PAC-Bayes bound to the net, and bounds the trajectory error
between each gain and its nearest net point.  Here \(X_\gamma\) bounds
comparator states, \(D_{\rm sc}\) bounds all state--input points on which the
costs are evaluated, \(L_{\cK}\) is the Lipschitz constant of the map from a
gain to its counterfactual trajectory, and \(\mathfrak A_{\rm sc}\) multiplies
the logarithm of the cover size:
\begin{equation}
\begin{aligned}
    X_\gamma&=\frac{\kappa^2W}{\gamma},\\
    D_{\rm sc}
      &=\left(\sqrt{1+\kappa^2}+2\Lambda(K_0)\right)X_\gamma,\\
    L_{\cK}
      &=\frac{\kappa^2W}{\gamma}
        +\frac{(1+\kappa)\kappa^4\opnorm B\,W}{\gamma^2},\\
    \mathfrak A_{\rm sc}
      &=\frac{L^2}{\mu}\bigl(1+\Lambda(K_0)B_+\bigr),
    \qquad L=GD_{\rm sc}.
\end{aligned}
\label{eq:strong-scales}
\end{equation}
A cover in gain space induces a trajectory cover through the estimate
\begin{equation}
    \sup_t\norm{y_t^K-y_t^{K'}}
    \le L_{\cK}\fnorm{K-K'}.
    \label{eq:K-lipschitz}
\end{equation}

\begin{theorem}[Strongly convex regret for the stable linear state-feedback class]
\label{thm:strong-linear}
Suppose the costs are \((G,D_{\rm sc})\)-regular and
\(\mu\)-strongly convex on the radius-\(D_{\rm sc}\) ball.  Let
\(R_K=\kappa\sqrt{\min\{d_x,d_u\}}\).  There is a
counterfactual-tracking algorithm based on a finite cover such that
\begin{equation}
    \Reg_T(\cK_{\kappa,\gamma})
    \le
    C\mathfrak A_{\rm sc}d_K
    \log\!\left(
      4+\frac{2R_KTLL_{\cK}}{\mathfrak A_{\rm sc}d_K}
    \right).
    \label{eq:strong-linear}
\end{equation}
With the conditioned tracker \eqref{eq:conditioned-tracker},
\begin{equation}
    \Reg_T(\cK_{\kappa,\gamma})
    \le
    C_{\rm sys}\frac{G^2W^2}{\mu\gamma^2}d_K
    \log\!\left(
      4+\frac{C_{\rm sys}\mu T}{Gd_K\gamma}
    \right).
    \label{eq:strong-linear-scale}
\end{equation}
\end{theorem}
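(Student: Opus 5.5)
The plan is to apply \cref{thm:pac-strong} to a finite $\varepsilon$-net $\cN_\varepsilon$ of $\cK_{\kappa,\gamma}$ in Frobenius norm, with the uniform prior, and then pay an approximation term to pass from the nearest net point to an arbitrary comparator $K$.

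\textbf{Step 1: the net.} Every $K\in\cK_{\kappa,\gamma}$ has $\opnorm K\le\kappa$ and rank at most $\min\{d_x,d_u\}$. Hence $\fnorm K\le R_K$, and the class lies in a Frobenius ball of radius $R_K$ in $\R^{d_K}$. A standard volumetric argument gives an $\varepsilon$-net $\cN_\varepsilon\subseteq\cK_{\kappa,\gamma}$ with $N\le(1+2R_K/\varepsilon)^{d_K}$. Taking a maximal $\varepsilon$-separated subset of the class, and covering with balls of radius $\varepsilon/2$, keeps the net inside the class up to a factor $3$ in place of $2$.

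\textbf{Step 2: verify the hypotheses on the net.} By \eqref{eq:comparator-radius}, the net trajectories satisfy $\norm{y_t^K}\le\sqrt{1+\kappa^2}\,X_\gamma$, so their pointwise diameter is finite. The Gibbs barycenter lies in the convex hull of these points, so it is bounded by the same radius. The physical point is $y_t=\bar y_t+(e_t,-K_0e_t)$, and its deviation from the reference must be bounded uniformly in $t$, not only in sum.

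\textbf{Step 3: bound the tracking error.} The defect identity \eqref{eq:gibbs-defect-identity} gives $\norm{\delta_{t+1}}\le 2\sup_\pi\norm{x^\pi_{t+1}}\le 2X_\gamma$. Unrolling \eqref{eq:tracking-error} then gives $\norm{y_t-\bar y_t}\le 2\Lambda(K_0)X_\gamma$. Summing the two radii, every evaluated point lies in the radius-$D_{\rm sc}$ ball, where $(G,D_{\rm sc})$-regularity gives Lipschitz constant $L=GD_{\rm sc}$ and $\mu$-strong convexity holds on that convex ball.

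\textbf{Step 4: the net bound.} \Cref{thm:pac-strong} with the point-mass posterior gives regret against every net element at most $C\mathfrak A_{\rm sc}\log N$.

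\textbf{Step 5: approximation.} For an arbitrary $K$, pick $K'\in\cN_\varepsilon$ with $\fnorm{K-K'}\le\varepsilon$. By \eqref{eq:K-lipschitz} and $L$-Lipschitzness on the ball,
\[
    \sum_t c_t(y_t^{K'})-\sum_t c_t(y_t^K)\le TLL_{\cK}\varepsilon.
\]
I would prove \eqref{eq:K-lipschitz} by writing
\[
    x_t^K-x_t^{K'}=\sum_s (A-BK)^{t-1-s}B(K'-K)x_s^{K'},
\]
using the geometric decay bound $\kappa^2(1-\gamma)^r$ and $\norm{x_s^{K'}}\le X_\gamma$. This gives the $\kappa^4\opnorm B W\gamma^{-2}$ term. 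The input difference $u^K-u^{K'}=-(K-K')x^K-K'(x^K-x^{K'})$ contributes the remaining terms.

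\textbf{Step 6: choose $\varepsilon$.} The total bound is
\[
    C\mathfrak A_{\rm sc}d_K\log(1+2R_K/\varepsilon)+TLL_{\cK}\varepsilon.
\]
Choosing $\varepsilon=\mathfrak A_{\rm sc}d_K/(TLL_{\cK})$ makes the second term equal to $\mathfrak A_{\rm sc}d_K$, which is absorbed into the first. This yields \eqref{eq:strong-linear}, with the $4$ inside the logarithm making the bound at least a constant.

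\textbf{Step 7: scaling.} Under \eqref{eq:conditioned-tracker} we have $D_{\rm sc}=O_{\rm sys}(W/\gamma)$, so $L=O_{\rm sys}(GW/\gamma)$ and $\mathfrak A_{\rm sc}=O_{\rm sys}(G^2W^2/(\mu\gamma^2))$. Also $L_{\cK}=O_{\rm sys}(W/\gamma^2)$. Therefore the argument of the logarithm is
\[
    R_KTLL_{\cK}/(\mathfrak A_{\rm sc}d_K)=O_{\rm sys}\bigl(\mu T/(Gd_K\gamma)\bigr),
\]
which gives \eqref{eq:strong-linear-scale}.

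The main obstacle is Step 3: showing that the physical point $y_t$ stays in the radius-$D_{\rm sc}$ ball uniformly in $t$, since regularity and strong convexity are only assumed there. The first attempt is the crude defect bound via the supremum of $\norm{x^\pi_{t+1}}$ over the net, passed through $\Lambda(K_0)$ as above. It matches the factor $2\Lambda(K_0)$ in $D_{\rm sc}$ exactly, which suggests it is the intended route.
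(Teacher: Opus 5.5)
Your proposal is correct and follows the paper's proof essentially step for step. Both use a maximal $\varepsilon$-separated subset of $\cK_{\kappa,\gamma}$ with the volumetric bound; this already gives the factor $2$, since the disjoint $\varepsilon/2$-balls sit inside the $(R_K+\varepsilon/2)$-ball. Both use the deterministic radius-$D_{\rm sc}$ argument via $\norm{\delta_{t+1}}\le 2X_\gamma$ and tracking error $2\Lambda(K_0)X_\gamma$, then \cref{thm:pac-strong} on the net with the uniform prior, the estimate \eqref{eq:K-lipschitz}, and $\varepsilon=\mathfrak A_{\rm sc}d_K/(TLL_{\cK})$ (the paper caps this at $R_K$ to cover $TLL_{\cK}=0$).

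The only thing to tighten is Step 7. Bounding $LL_{\cK}/\mathfrak A_{\rm sc}=\mu L_{\cK}/(L\Gamma_{\rm tr})$ needs the lower bounds $L=GD_{\rm sc}\ge GW/\gamma$ and $\Gamma_{\rm tr}\ge1$, not upper bounds on $L$ and $\mathfrak A_{\rm sc}$.
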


An \(\varepsilon\)-net contributes a PAC-Bayes term proportional to the
logarithm of its cardinality,
\(d_K\log(R_K/\varepsilon)\), while replacing a gain by its nearest net point
costs at most \(TLL_{\cK}\varepsilon\).  Optimizing \(\varepsilon\) gives the
logarithm in \eqref{eq:strong-linear}.  The cover may contain exponentially
many gains, so direct implementation is not polynomial in \(d_K\).

At fixed dimension and plant conditioning, and with \(G/\mu\) bounded, the
next lower bound matches the upper bound in \(T,W,\gamma\).

\begin{theorem}[Strongly convex lower bound for the stable linear state-feedback class]
\label{thm:strong-linear-lower}
There are universal constants \(c,C>0\) and a fixed one-state, two-input
plant admitting a one-step tracker such that, for every \(W,\mu>0\),
\(\gamma\in(0,1/4]\), and \(T\ge C/\gamma\), there is an oblivious
distribution over quadratic costs satisfying:
\begin{enumerate}[label=(\roman*),leftmargin=2.2em]
\item every cost is globally \(\mu\)-strongly convex;
\item on the ball of radius \(D=4W/\gamma\), every gradient has norm at most \(2\mu D\);
\item every causal randomized controller satisfies
\begin{equation}
    \E\Reg_T(\cK_{1,\gamma})
    \ge c\mu W^2\gamma^{-2}\log T.
    \label{eq:strong-linear-lower}
\end{equation}
\end{enumerate}
\end{theorem}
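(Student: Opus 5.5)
The plan is to reduce the problem to Bayesian online prediction under squared loss. On this problem the posterior variance of an unknown mean shrinks like \(1/t\), and summing over rounds gives the \(\log T\) factor. The plant will be built so that the state scale \(W/\gamma\) multiplies the quantity being predicted, which produces the \(W^2\gamma^{-2}\) factor.

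\textbf{Plant, comparators, and costs.} Take the fixed plant \(x_{t+1}=u^{(1)}_t+w_t\), with \(A=0\) and \(B=[1\ 0]\), and use the constant disturbance \(w_t\equiv W\). The one-step tracker \(K_0=0\) gives \(F_0=0\). For \(\theta\in[-\tfrac12,\tfrac12]\), consider the gains \(K_\theta=(-(1-\gamma),\,-\theta)\).
\begin{itemize}
\item The closed-loop matrix is \(A-BK_\theta=1-\gamma\), so the decay condition of \(\cK_{1,\gamma}\) holds with \(\kappa=1\).
\item For the norm condition \(\opnorm{K_\theta}\le1\), shrink the ranges of the two gain entries slightly, for example \(\abs{\theta}\le\tfrac14\) and first entry \(-(1-2\gamma)\), if needed.
\end{itemize}
Every comparator then shares the deterministic state trajectory \(X^*_t\) with \(X^*_{t+1}=(1-\gamma)X^*_t+W\). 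Because \(T\ge C/\gamma\), we have \(X^*_t\asymp W/\gamma\) for all \(t\ge t_0\asymp1/\gamma\), and a constant fraction of the \(\log T\) mass survives after \(t_0\). The second input of comparator \(\theta\) is \(u^{(2)}_t=\theta X^*_t\).

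Define the quadratic costs
\[
c_t(x,u)=\frac\mu2\Bigl[(x-X^*_t)^2+\bigl(u^{(1)}-(1-\gamma)X^*_t\bigr)^2+\bigl(u^{(2)}-X^*_tz_t\bigr)^2\Bigr],
\]
where \(z_t=\theta^\star+\xi_t\). Here \(\theta^\star\) is drawn from a prior on \([-\tfrac18,\tfrac18]\), and the \(\xi_t\) are i.i.d.\ bounded noise. A convenient choice is a scaled Bernoulli model with a Beta prior, which gives clean posterior variances.
\begin{itemize}
\item Each cost is globally \(\mu\)-strongly convex.
\item On the ball of radius \(D=4W/\gamma\), each gradient is bounded by \(\mu(\norm y+3X^*_t)\le2\mu D\). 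This verifies (i) and (ii).
\item The first two terms vanish along every comparator trajectory and are nonnegative for the learner, so they can only increase the learner's regret.
\end{itemize}

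\textbf{Regret lower bound.} Bound the comparator in hindsight by the comparator \(\theta^\star\). Because \(u^{(2)}\) does not affect the dynamics, the learner's choice of \(u^{(2)}_t\) is a pure prediction made from the past observations \(z_1,\ldots,z_{t-1}\). Condition on the past and on the learner's private randomness. The excess of the learner's third-term cost over that of \(\theta^\star\) then has expectation
\[
\frac\mu2\,\E\bigl(u^{(2)}_t-X^*_t\theta^\star\bigr)^2\ \ge\ \frac\mu2\,(X^*_t)^2\operatorname{Var}\bigl(\theta^\star\mid z_{<t}\bigr),
\]
because the cross term with \(\xi_t\) has mean zero.

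\textbf{Main obstacle.} The step I expect to require care is showing that \(\operatorname{Var}(\theta^\star\mid z_{<t})\ge c/t\) uniformly, which has to hold despite bounded noise and a bounded prior.
\begin{itemize}
\item With the Beta–Bernoulli model, the posterior is \(\mathrm{Beta}(a+S,\ b+t-1-S)\). Its variance is at least \(c/t\) whenever the success fraction stays away from \(0\) and \(1\).
\item That event has constant probability, because the prior keeps \(\theta^\star\) away from the endpoints.
\end{itemize}
Summing \(\frac\mu2(W/(2\gamma))^2\,c/t\) over \(t_0\le t\le T\) gives \(c\,\mu W^2\gamma^{-2}\log T\) whenever \(T\ge C/\gamma\). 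Finally, Yao's principle turns this lower bound under the oblivious prior into the stated bound for every causal randomized controller.
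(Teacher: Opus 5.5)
The step you flag as the main obstacle, the posterior-variance bound of order \(1/t\), is routine. The real failure is that your construction cannot produce \(\log T\).

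Your loss weight is \((X_t^*)^2=(W/\gamma)^2\alpha_t^2\), with \(\alpha_t=1-(1-\gamma)^{t-1}\). This is only of order \(W^2t^2\) for \(t\lesssim1/\gamma\), while your observations \(z_t=\theta^\star+\xi_t\) are equally informative in every round. Hence \(\E\operatorname{Var}(\theta^\star\mid z_{<t})\asymp1/t\) from both sides, since the running mean of the \(z_s\) already attains \(O(1/t)\). For \(T\ge C/\gamma\) this gives
\[
\sum_{t\le T}(X_t^*)^2\,\E\operatorname{Var}(\theta^\star\mid z_{<t})
\asymp\frac{W^2}{\gamma^2}\sum_{t\le T}\frac{\alpha_t^2}{t}
\asymp\frac{W^2}{\gamma^2}\bigl(1+\log(\gamma T)\bigr).
\]
So the claim that a constant fraction of the \(\log T\) mass survives after \(t_0\asymp1/\gamma\) is false. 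The tail sum is \(\sum_{t_0\le t\le T}1/t\approx\log(\gamma T)\), which is \(O(1)\) at \(T=\lceil C/\gamma\rceil\), where the theorem demands order \(\log(1/\gamma)\). Because the estimate is two-sided, comparing with \(K_{\theta^\star}\) in this model can never recover the missing factor.

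What is missing is that the information about \(\theta\) must grow with the state, in step with the loss weight. The paper fixes the noise amplitude at \(X=W/\gamma\) and lets only the mean scale. Its cost center is \(X\xi_t\) with \(\Pr(\xi_t=+1\mid\theta)=(1-\alpha_t\theta)/2\), so \(\E[X\xi_t\mid\theta]=-\theta\bar x_t\). In your notation, this means replacing the center \(X_t^*(\theta^\star+\xi_t)\) by \(X_t^*\theta^\star+X\xi_t\). The consequences are:
\begin{enumerate}
\item Each sign carries Fisher information at most \(\frac43\alpha_s^2\).
\item The van Trees inequality bounds the round-\(t\) excess below by a multiple of \(X^2\alpha_t^2/(I_\varphi+\frac43\sum_{s<t}\alpha_s^2)\).
\item The self-normalized sum is at least \(\frac34\log(1+\frac{4}{3I_\varphi}\sum_t\alpha_t^2)\asymp\log T\). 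The transient \(\gamma^{-2/3}\lesssim t\lesssim\gamma^{-1}\) alone contributes order \(\log(1/\gamma)\).
\end{enumerate}
With a time-varying signal strength there is no conjugate Beta posterior, which is why van Trees is the natural tool.

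Separately, your comparators are not in \(\cK_{1,\gamma}\). With \(A=0\) and \(B=[1\ 0]\), a state of order \(W/\gamma\) forces \(-K_1\ge1-c\gamma\). Then \(\opnorm{K}\le1\) leaves \(K_2^2\le1-(1-c\gamma)^2\le2c\gamma\), so \(\abs{\theta}=O(\sqrt\gamma)\). Your repair fails, because \((1-2\gamma)^2+\frac1{16}>1\) for every \(\gamma\le1/64\). Shrinking the prior to width \(O(\sqrt\gamma)\) does not help either: it caps every posterior variance at \(O(\gamma)\) and so removes exactly the transient contribution described above.

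The remedy is to free norm budget through the plant. Already \(B=[2\ 0]\) with \(K_1=-(1-\gamma)/2\) admits \(\abs{\theta}\le1/2\). The paper instead uses the integrator \(x_{t+1}=x_t+u_{1,t}+u_{2,t}+W\), with one-step tracker \(K_0=(1,0)^\top\) and comparators \(K_\theta=(\gamma-\theta,\theta)^\top\). Then \(A-BK_\theta=1-\gamma\) and \(\opnorm{K_\theta}^2\le13/16\). The prediction takes place in the input direction \((1,-1)\), which does not move the state.
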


At fixed dimension and plant conditioning, with \(G/\mu\) bounded,
\cref{thm:strong-linear,thm:strong-linear-lower} agree in \(T,W,\gamma\).
Whether the factor \(d_K\) in the upper bound is necessary remains open.

Combined with a finite trajectory cover and a Lipschitz approximation bound,
the PAC-Bayes theorem gives logarithmic regret when the logarithm of the cover
size is small.  The constraint \(\mathfrak r_0(\Phi)\le R\) alone does not
give a horizon-independent trajectory cover: at horizon \(T\), the delayed
construction produces \(T\) separated response patterns.  The next theorem
shows that distinguishing among these delays costs \(\sqrt{T\log T}\) regret
even when every cost is strongly convex.

\begin{theorem}[Strong convexity does not improve the response-ball rate]
\label{thm:sls-strong-lower}
There are universal constants \(c>0\) and \(T_0\) with the following
property.  Fix \(T\ge T_0\), \(W,R,\mu>0\), and \(G\ge4\mu\), and set
\(D=W(1+3eR)\).  On the scalar plant \(x_{t+1}=u_t+w_t\), there is an
oblivious distribution over disturbances and costs.  Almost surely
\(\abs{w_t}\le W\), and every cost is globally \(\mu\)-strongly convex and
\((G,D)\)-regular; moreover, every causal randomized controller satisfies
\begin{equation}
    \E\Reg_T(\cS(R))
    \ge cGDWR\sqrt{T\log(eT)}.
    \label{eq:sls-strong-lower}
\end{equation}
On this scalar plant, the lower bound matches the bounded-region upper bound
of \cref{thm:sls-convex} up to universal constants.
\end{theorem}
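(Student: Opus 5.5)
The plan is to reduce to a prediction-with-expert-advice lower bound whose $T$ experts are the delayed finite-impulse-response comparators used in \cref{thm:sls-convex-lower}. On the plant $x_{t+1}=u_t+w_t$ with $K_0=0$, the baseline response is $\Phi_{0,x}^{[1]}=1$ with all other blocks zero. For a delay $k\in\{1,\dots,H\}$ with $H\asymp T$ and an amplitude $a\asymp R$, let $\Phi_k$ add $\Phi_{k,u}^{[k]}=a$ and hence $\Phi_{k,x}^{[k+1]}=a$. Its centered gain is $2\abs a\le R$, so $\Phi_k\in\cS(R)$. The counterfactual pair of $\Phi_k$ is
\[
u_t^{k}=a\,w_{t-k},\qquad x_t^k=w_{t-1}+a\,w_{t-1-k}.
\]
I would draw the disturbances i.i.d.\ uniform on $\{\pm W\}$, so that $\abs{w_t}\le W$ almost surely. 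Every state--input point of the learner or a comparator that matters then lies in a ball of radius $W(1+3eR)=D$, at least after a clipping argument for the learner.

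Next I would choose costs of the form
\[
c_t(x,u)=g\,\varepsilon_t\,u+\tfrac\mu2\bigl[(x-w_{t-1})^2+(u-m_t)^2\bigr],
\]
where the $\varepsilon_t$ are i.i.d.\ Rademacher signs independent of everything revealed before round $t$, and $g\asymp GD$. The quadratic part gives global $\mu$-strong convexity. Because $G\ge4\mu$, choosing $g\le GD/2$ and $\abs{m_t}\le D$ keeps $\abs{\nabla c_t}\le GD$ on the radius-$D$ ball, which gives $(G,D)$-regularity.

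Since $\varepsilon_t$ is independent of the learner's $u_t$, the linear part contributes zero to the learner's expected cost. For comparator $k$, the linear part contributes $g a\sum_t\varepsilon_t w_{t-k}$. The family $\{\sum_t\varepsilon_t w_{t-k}\}_{k\le H}$ consists of nearly uncorrelated, approximately Gaussian sums of variance $TW^2$. A standard maximal-inequality argument then gives
\[
\E\min_k\sum_t\varepsilon_t w_{t-k}\le -cW\sqrt{T\log T},
\]
for instance through a Gaussian coupling plus Sudakov minoration, or through the second-moment argument already used for \cref{thm:sls-convex-lower}, which I would simply reuse. This produces the target term $-c\,gaW\sqrt{T\log(eT)}\asymp -cGDWR\sqrt{T\log(eT)}$.

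The main obstacle is the quadratic part. A delayed comparator pays about $\tfrac\mu2 a^2W^2$ per round in $(u-m_t)^2$ and in $(x-w_{t-1})^2$. A learner, or the baseline $\Phi_0$, playing $u\approx m_t$ avoids this. That gap is of order $\mu R^2W^2T$, which would swamp the $\sqrt{T\log T}$ gain. I would first try to neutralize it by letting the quadratic center track a comparator-shaped but unpredictable target: take $m_t=a\,\xi_t$ with $\xi_t$ a fresh sign scaled by $W$, and couple $\varepsilon_t$ to $\xi_t$. The aim is that the learner, unable to predict $m_t$, pays the same expected quadratic cost $\tfrac\mu2(\E u_t^2+a^2W^2)$ as every $\Phi_k$ up to mean-zero cross terms. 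Those cross terms would then only add to the expert fluctuation, and the residual would be checked against the linear gain using $G\ge4\mu$.

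If exact neutralization fails, the fallback is to make the strongly convex term depend on $x$ only through $x-w_{t-1}-u_{t-1}$-type identities that hold for every trajectory alike, so that it is comparator-independent. Whichever device works, the last steps are the same: verify the quadratic terms cancel in expectation between the learner and the best delay, conclude $\E\Reg_T(\cS(R))\ge cGDWR\sqrt{T\log(eT)}$ for $T\ge T_0$, and compare with \eqref{eq:sls-upper} at $L=GD_R$ and $D_R\asymp D$ to obtain the matching claim.
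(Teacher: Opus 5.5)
There is a genuine gap, at exactly the step you flag as the main obstacle. Your setup matches the paper's: delayed FIR comparators in $\cS(R)$, a fresh-sign linear term, and the delayed cross-correlation maximum. But neither device you propose removes the quadratic penalty.

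With $m_t=aW\xi_t$ fresh and $\varepsilon_t=s\xi_t$, expanding the square gives
\[
g\varepsilon_tu+\tfrac\mu2(u-m_t)^2=\tfrac\mu2u^2+(sg-\mu aW)\xi_tu+\tfrac\mu2a^2W^2 .
\]
This is the unshifted cost with a different linear coefficient, plus a constant shared by every trajectory. The learner's expected quadratic cost $\frac\mu2(\E u_t^2+a^2W^2)$ equals a comparator's only if the learner chooses to carry energy $a^2W^2$. Playing $u\equiv0$, the learner pays $\frac\mu2a^2W^2$ per round. Each single-delay $\Phi_k$ pays $\frac{3\mu}2a^2W^2$ in expectation, because the state term is $(x_t^k-w_{t-1})^2=(u_{t-1}^k)^2=a^2W^2$.

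So the deficit $\mu a^2W^2T\asymp\mu R^2W^2T$ is untouched. It exceeds the hindsight gain $\asymp gaW\sqrt{T\log T}$ once $T$ is large at fixed $G/\mu$ and $R$, and the theorem must hold in that regime.

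The fallback cannot be implemented either. $c_t$ sees only the current pair $(x,u)$ and must have curvature $\mu$ in $u$. The identity $x_t-w_{t-1}=u_{t-1}$ is exactly what turns your state term into $(u_{t-1}^k)^2$, not into something trajectory-independent. Shrinking the single-delay amplitude to balance the two terms yields only regret of order $g^2\log T/\mu$, the finite-class strongly convex rate, not $GDWR\sqrt{T\log(eT)}$.

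The missing idea is that $\cS(R)$ constrains only an $\ell_1$-type budget, so the hindsight comparator can combine many delays. The paper keeps $m_t=0$, that is, $c_t=\frac\mu2(x-w_{t-1})^2+\frac\mu2u^2+\ell\varepsilon_tu$ with $\ell=GD/4$. It uses responses $\Phi_\theta$ with $\Phi_{\theta,x}^{[1]}=1$ and $\Phi_{\theta,u}^{[j]}=\Phi_{\theta,x}^{[j+1]}=\theta_j$. Their centered gain is at most $2\norm\theta_1$, and their actions are $u^\theta=WX_\xi\theta$, where $X_\xi$ collects the shifted disturbance signs.

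The construction proceeds as follows.
\begin{enumerate}
\item Partition $\lfloor T/4\rfloor$ delays into $k\asymp q\sqrt{T\log(eT)}$ groups, with $q=\mu WR/(2\ell)$.
\item Put mass $R/(2k)$, with the appropriate sign, on the best delay in each group.
\item The linear rewards add coherently, so the gain stays of order $\ell WR\sqrt{T\log(eT)}$. Each group still has $T^{\Omega(1)}$ delays; this is where $G\ge4\mu$, hence $q\le1/(6e)$, is used.
\item The energies add nearly orthogonally. The circulant-embedding bound $\opnorm{X_\xi}^2\le CT\log(eT)$ gives $\mu\norm{u^\theta}^2\le\mu W^2\opnorm{X_\xi}^2\norm\theta_2^2\le C\mu W^2T\log(eT)R^2/k$.
\item With a large constant in $k$, this is at most half the gain. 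Meanwhile the learner's expected cost is $\frac\mu2\E\sum_t(u_{t-1}^2+u_t^2)\ge0$.
\end{enumerate}
Your plan does not reach the stated bound without this spreading step, or some other comparator whose correlation with the fresh signs is of order $WR\sqrt{T\log T}$ but whose energy is $o(R^2W^2T)$.
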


Strong convexity yields logarithmic regret for finite or suitably coverable
response families.  For the full system-level response ball, however, the
learner must distinguish among \(T\) possible delay locations, and the lower
bound shows that strong convexity does not reduce the resulting
\(\sqrt{T\log T}\) rate.

\section{Discussion}
\label{sec:discussion}

Counterfactual tracking learns from simulated state--input trajectories
rather than from controller parameters.  On each round, the learner simulates
the candidate policies on the revealed cost and disturbance history,
aggregates their current counterfactual points into a reference, and applies a
stable tracker to that reference.  Physical regret is reduced to
reference-to-comparator regret and the excess cost caused by one-step
reference defects.  The policies need not share a parameterization.

Two properties of a response class enter the bounds.  Centered response gain
bounds trajectory size: \(\mathfrak r_0(\Phi)\le R\) and
\(\norm{w_t}\le W\) imply a state--input deviation of at most \(WR\) from the
tracker.  A common decay envelope or a small trajectory cover bounds the
number of response patterns the learner must distinguish: the decay envelope
gives a finite truncation, while the logarithm of the cover size enters the
PAC-Bayes bound.  Geometric decay supplies both properties.  The
decay-weighted norm in \eqref{eq:weighted-norm} then makes both the comparator
radius and the trajectory sensitivity to the response coefficients
proportional to \(\gamma^{-1/2}\).  Bounded centered response
gain supplies only the first property.  Exact-prefix mirror descent still
competes with that larger class, but the delayed-response lower bound shows
that arbitrary delay locations necessarily add \(\sqrt{\log T}\).

Three limitations remain.  First, a large trajectory cover increases the
PAC-Bayes complexity term and can force large regret.  Second, computing a
Gibbs barycenter may require integration over the entire policy class, while
exact-prefix mirror descent optimizes \(O(T(d_x+d_u)d_x)\) variables.  Third,
for nonlinear dynamics, an average of feasible policy trajectories need not
itself be feasible under the same disturbance.  In that case, even a fixed
mixture can create reference defect.

These limitations leave two questions.  Which policy classes admit
computationally efficient trajectory aggregation while preserving bounds on
cumulative reference defect or trajectory variance?  For nonlinear
systems, when can several policy trajectories be combined into a reference
with quantitatively bounded one-step defect?

\appendix

\numberwithin{equation}{section}

\section{Counterfactual tracking}
\label{app:tracking}

We first prove the pathwise reduction used throughout the paper: physical
regret is at most reference-to-comparator regret plus
\(L\Lambda(K_0)\) times the cumulative reference defect.  Iterating
\eqref{eq:tracking-error} from \(e_1=0\) gives
\[
    e_t=\sum_{s=1}^{t-1}F_0^{t-1-s}\delta_{s+1}.
\]
Since \(y_t-\bar y_t=(e_t,-K_0e_t)\), the triangle inequality and an
interchange of two finite sums yield
\[
\begin{aligned}
    \sum_{t=1}^T\norm{y_t-\bar y_t}
    &\le
    \sum_{s=1}^{T-1}\norm{\delta_{s+1}}
    \sum_{r=0}^{T-1-s}
    \left\|
       \begin{bmatrix}I\\-K_0\end{bmatrix}F_0^r
    \right\|_{\mathrm{op}}  \\
    &\le \Lambda(K_0)\sum_{s=1}^{T-1}\norm{\delta_{s+1}}.
\end{aligned}
\]
This proves \eqref{eq:tracking-convolution}.  Applying Lipschitz continuity
on each round gives \eqref{eq:tracking-cost}.  Adding and subtracting
\(\sum_t c_t(\bar y_t)\) then proves \cref{thm:generic-reduction}.

We next compute the reference defect created by a changing distribution.
Let \(q_t\) be any probability distribution on policies and let
\(
  \bar y_t=\int y_t^\pi q_t(d\pi)
\).
Because every counterfactual trajectory obeys the same affine dynamics,
\begin{equation}
    \delta_{t+1}
    =\int x_{t+1}^\pi\,(q_t-q_{t+1})(d\pi).
    \label{eq:app-barycentric-defect}
\end{equation}
In particular, a fixed distribution has zero defect.  Linearity of the plant
makes the defect depend only on the change from \(q_t\) to \(q_{t+1}\); the
policies themselves need not be linear.

For Gibbs weights, we therefore need two bounds: a PAC-Bayes bound for the
reference cost and a bound on the change between successive distributions.
We prove both next.

\section{PAC-Bayes bounds for general policy classes}
\label{app:pac}

The Gibbs analysis uses two properties of exponential weights: its cumulative
loss relative to any posterior, and the amount by which one loss update
changes the distribution.  All integrals below are with respect to the policy
variable.  All diameter assumptions are understood essentially with respect
to \(\nu\).  Because the horizon is finite and \(p_t\ll\nu\) for every
\(t\), there is a single \(\nu\)-full set on which the required bounds hold at
every round.  We restrict all subsequent integrals to this set.

\subsection{Two exponential-weights inequalities}

Let \(\ell_1,\ldots,\ell_T\) be bounded measurable functions on a standard
Borel space with prior \(\nu\), and define
\[
    p_t(d\pi)
    \propto
    \exp\!\left\{-\eta\sum_{s<t}\ell_s(\pi)\right\}\nu(d\pi).
\]
Write \(a_t=\operatorname*{ess\,sup}\ell_t-
\operatorname*{ess\,inf}\ell_t\), and let
\[
    Z_t=\int
    \exp\!\left\{-\eta\sum_{s<t}\ell_s(\pi)\right\}\nu(d\pi).
\]
The proof first bounds each exponential-weights update through the ratio
\(Z_{t+1}/Z_t\), then compares the prior \(\nu\) with an arbitrary posterior
\(\rho\) through \(\KL(\rho\|\nu)\).  Hoeffding's lemma gives
\[
    -\frac1\eta\log\frac{Z_{t+1}}{Z_t}
    =-\frac1\eta\log\E_{p_t}e^{-\eta\ell_t}
    \ge \E_{p_t}\ell_t-\frac{\eta a_t^2}{8}.
\]
Summing this display telescopes the normalizers.  The Donsker--Varadhan
variational inequality then bounds
\[
    -\frac1\eta\log Z_{T+1}
    \le \E_\rho\sum_{t=1}^T\ell_t
       +\frac{\KL(\rho\|\nu)}{\eta}.
\]
Together these estimates give, for every \(\rho\ll\nu\),
\begin{equation}
    \sum_{t=1}^T\E_{p_t}\ell_t
      -\E_\rho\sum_{t=1}^T\ell_t
    \le
    \frac{\KL(\rho\|\nu)}{\eta}
      +\frac{\eta}{8}\sum_{t=1}^Ta_t^2.
    \label{eq:app-pac-hoeffding}
\end{equation}
The variational inequality is valid with value \(+\infty\), so no separate
integrability argument is needed when \(\KL(\rho\|\nu)=\infty\).

We shall also need a variance-sensitive version.  If \(\eta a_t\le1/4\),
then
\begin{equation}
    \sum_{t=1}^T\E_{p_t}\ell_t
      -\E_\rho\sum_{t=1}^T\ell_t
    \le
    \frac{\KL(\rho\|\nu)}{\eta}
      +C\eta\sum_{t=1}^T\operatorname{Var}_{p_t}(\ell_t).
    \label{eq:app-pac-bernstein}
\end{equation}
For each \(t\), apply \(e^z\le1+z+Cz^2\) to
\(z=-\eta(\ell_t-\E_{p_t}\ell_t)\).  This replaces the range penalty
\(\eta a_t^2/8\) in the one-step bound by
\(C\eta\operatorname{Var}_{p_t}(\ell_t)\).  Summing over \(t\) and applying
the Donsker--Varadhan inequality above gives
\eqref{eq:app-pac-bernstein}.  Both inequalities hold pathwise and therefore
remain valid when the realized loss sequence is selected adaptively from the
revealed history.

We need two estimates on how far one exponential update moves a
distribution: a range bound for convex costs and a variance bound for
strongly convex costs.  Interpolate from \(p_t\) to \(p_{t+1}\) by
\[
    p_{t,s}(d\pi)
    =\frac{e^{-s\eta\ell_t(\pi)}p_t(d\pi)}
           {\int e^{-s\eta\ell_t(\pi')}p_t(d\pi')},
    \qquad 0\le s\le1.
\]
Differentiation under the integral gives, for every bounded Bochner
integrable Hilbert-valued map \(h(\pi)\),
\begin{equation}
    \frac{d}{ds}\E_{p_{t,s}}h
    =-\eta\E_{p_{t,s}}
       \bigl[(h-\E_{p_{t,s}}h)
             (\ell_t-\E_{p_{t,s}}\ell_t)\bigr].
    \label{eq:app-tilt-derivative}
\end{equation}
Write \(\norm{\cdot}_{\rm TV}\) for total-variation distance, normalized to
lie in \([0,1]\).  For a measurable set \(A\), apply
\eqref{eq:app-tilt-derivative} with \(h=\mathbf 1_A\), integrate over
\(s\in[0,1]\), and take the supremum over \(A\).  Since the range of
\(\ell_t\) is \(a_t\), this gives
\begin{equation}
    \norm{p_{t+1}-p_t}_{\rm TV}\le C\eta a_t.
    \label{eq:app-tv-movement}
\end{equation}
If \(\eta a_t\le1/4\), the density ratio of \(p_{t,s}\) to \(p_t\) is bounded
above and below by universal constants.  Suppose \(z(\pi)\) and \(h(\pi)\)
are Hilbert-valued,
\(\norm{h(\pi)-h(\pi')}\le b\norm{z(\pi)-z(\pi')}\), and
\(\ell_t(\pi)=c_t(z(\pi))\) for an \(L\)-Lipschitz function \(c_t\).
Cauchy--Schwarz and the two Lipschitz conditions give
\begin{equation}
\begin{aligned}
    \norm{\operatorname{Cov}_{p_{t,s}}(h,\ell_t)}
    &\le
      \sqrt{\operatorname{Var}_{p_{t,s}}(h)
             \operatorname{Var}_{p_{t,s}}(\ell_t)}\\
    &\le bL\operatorname{Var}_{p_{t,s}}(z).
\end{aligned}
\label{eq:app-covariance-movement}
\end{equation}
The bounded density ratio implies
\(\operatorname{Var}_{p_{t,s}}(z)\le
C\operatorname{Var}_{p_t}(z)\).  Integrating
\eqref{eq:app-tilt-derivative} over \(s\in[0,1]\) therefore gives
\begin{equation}
\begin{aligned}
    \norm{\E_{p_{t+1}}h-\E_{p_t}h}
    &\le C\eta bL
       \E_{p_t}\norm{z-\E_{p_t}z}^2,
\end{aligned}
    \label{eq:app-variance-movement}
\end{equation}
where we used the pairwise identities
\[
  \operatorname{Var}_q(h)
  =\frac12\iint\norm{h(\pi)-h(\pi')}^2q(d\pi)q(d\pi'),
  \qquad
  \operatorname{Var}_q(\ell_t)
  \le L^2\operatorname{Var}_q(z),
\]
and the bounded density ratios along the interpolation.

\subsection{Convex costs}

\begin{proof}[Proof of \cref{thm:pac-convex}]
Set \(\ell_t(\pi)=c_t(y_t^\pi)\).  By \eqref{eq:policy-diameter},
\(a_t\le L\Delta\).  Convexity and \eqref{eq:gibbs-reference} give
\begin{equation}
    c_t(\bar y_t)\le\E_{p_t}\ell_t.
    \label{eq:app-convex-jensen}
\end{equation}
Substituting \eqref{eq:app-convex-jensen} into
\eqref{eq:app-pac-hoeffding} bounds the reference cost relative to every
posterior \(\rho\).

We next bound the reference defect created by the update from \(p_t\) to
\(p_{t+1}\).  From
\eqref{eq:app-barycentric-defect}, center \(x_{t+1}^\pi\) at any point in
its essential range.  Since
\[
    \norm{x_{t+1}^\pi-x_{t+1}^{\pi'}}
    =\norm{A(x_t^\pi-x_t^{\pi'})
           +B(u_t^\pi-u_t^{\pi'})}
    \le B_+\Delta,
\]
equation~\eqref{eq:app-tv-movement} implies
\[
    \norm{\delta_{t+1}}
    \le C\eta L B_+\Delta^2.
\]
The excess physical cost over the reference cost is therefore at most
\[
    C\eta L^2\Lambda(K_0)B_+\Delta^2T.
\]
Adding this term to \eqref{eq:app-pac-hoeffding} and using
\(\Gamma_{\rm tr}=1+\Lambda(K_0)B_+\) proves \eqref{eq:pac-convex}.
\end{proof}

We now specialize the PAC-Bayes bound to a finite policy class and prove its
matching lower bound.  The variance-sensitive inequalities above then give
the strongly convex result.

\section{Finite-class consequences and strongly convex aggregation}
\label{app:pac-consequences}

\begin{proof}[Proof of \cref{cor:finite-convex}]
For the uniform prior and the point mass at policy \(i\),
\(\KL(\delta_i\|\nu)=\log N\).  Apply \cref{thm:pac-convex} with
\[
    \eta
    =c\min\!\left\{
      \frac1{L\Delta},
      \sqrt{\frac{\log N}{L^2\Delta^2\Gamma_{\rm tr}T}}
    \right\}.
\]
Taking the minimum over \(i\) gives \eqref{eq:finite-convex}.
\end{proof}

Both finite-class lower bounds use sign sequences for which one codeword has
large correlation with an independent Rademacher sequence.

\begin{lemma}[Temporal sign code]
\label{lem:app-temporal-code}
For every \(m\ge1\) and \(N\ge2\), there are \(N\) sign strings
\(s^1,\ldots,s^N\in\{-1,+1\}^m\), with repetitions allowed and with the set
of distinct strings closed under sign reversal, such that
for independent uniform signs \(\sigma_1,\ldots,\sigma_m\),
\begin{equation}
    \E\max_{i\le N}\sum_{t=1}^m\sigma_ts_t^i
    \ge c\sqrt{m\min\{\log N,m\}}.
    \label{eq:app-temporal-code}
\end{equation}
Repeated strings may be replaced by policies that agree through time \(m\)
and differ thereafter.
\end{lemma}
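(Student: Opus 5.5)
The plan is an explicit block construction that handles both regimes of \(\min\{\log N,m\}\) at once.

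\emph{Construction.} Set
\[
k=\min\{\lfloor\log_2 N\rfloor,\,m\}\ge1 .
\]
Partition \(\{1,\ldots,m\}\) into \(k\) consecutive blocks \(I_1,\ldots,I_k\) whose lengths are \(\lfloor m/k\rfloor\) or \(\lceil m/k\rceil\). Every block then has length \(n_j\ge \lfloor m/k\rfloor\ge m/(2k)\). For each pattern \(\epsilon\in\{-1,+1\}^k\), define the string \(s^\epsilon_t=\epsilon_j\) for \(t\in I_j\). This gives \(2^k\le N\) distinct strings. The set is closed under sign reversal, since \(\epsilon\mapsto-\epsilon\) permutes it. We reach exactly \(N\) strings by repeating existing ones, which the statement allows.

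\emph{Lower bound.} For fixed \(\sigma\), choose \(\epsilon_j=\operatorname{sign}\sum_{t\in I_j}\sigma_t\). This gives
\[
\max_{i\le N}\sum_{t=1}^m\sigma_ts^i_t=\sum_{j=1}^k\Bigl|\sum_{t\in I_j}\sigma_t\Bigr| .
\]
Take expectations and apply Khintchine's inequality in the form \(\E\abs{\sum_{t=1}^n\sigma_t}\ge\sqrt{n/2}\) (Szarek's constant). The expected maximum is then at least
\[
\sum_j\sqrt{n_j/2}\ \ge\ k\sqrt{m/(4k)}\ =\ \tfrac12\sqrt{mk}.
\]

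\emph{Comparison with the target.} If \(\lfloor\log_2N\rfloor\ge m\), then \(k=m\ge\min\{\log N,m\}\). Otherwise \(k=\lfloor\log_2N\rfloor\). For \(N\ge2\) we have \(\lfloor\log_2N\rfloor\ge\tfrac12\log_2N\ge\tfrac12\log N\), so \(k\ge\tfrac12\min\{\log N,m\}\) in either case. Combining with the previous step gives \eqref{eq:app-temporal-code} with, for example, \(c=1/(2\sqrt2)\).

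\emph{Final sentence of the lemma.} This is a modeling remark rather than an inequality. When copies of one string are needed as distinct policies, we let them act identically through round \(m\) and differ afterward. Their counterfactual trajectories, and hence their losses, coincide on the first \(m\) rounds, so they realize the same codeword there.

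The only step needing care is the uneven block lengths together with the two regimes. Both are absorbed by the choice \(k\le m\), which ensures each block has length at least \(m/(2k)\ge\tfrac12\). I do not expect a genuine obstacle.
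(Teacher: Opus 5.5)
Your proof is correct, but it takes a different route from the paper's. The paper uses a random codebook. It samples \(\lfloor N/2\rfloor\) independent uniform strings together with their negatives. Conditionally on \(\sigma\), the absolute inner products are then independent copies of the absolute value of a length-\(m\) symmetric random walk. A binomial moderate-deviation lower bound shows that their maximum exceeds \(c\sqrt{m\log N}\) with constant probability when \(N\le e^{c_0m}\), and averaging over codebooks derandomizes. The saturated regime \(N>e^{c_0m}\) is treated separately, by truncating to \(\lfloor e^{c_0m}\rfloor\) strings and repeating. Odd \(N\) needs a duplicated string, and \(N=2\) is handled by adjusting constants.

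Your block (product) code replaces all of this with an exact computation. Because the block signs can be chosen independently, the maximum over the code equals \(\sum_j\abs{\sum_{t\in I_j}\sigma_t}\). Khintchine's inequality on each block is then the only probabilistic input. This buys an explicit deterministic code and an explicit constant \(c=1/(2\sqrt2)\). A single construction covers both regimes through \(k=\min\{\lfloor\log_2N\rfloor,m\}\), with no tail estimate and no probabilistic-method step.

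The paper's argument is the more generic one: it shows that a typical random code already works. However, the two finite-class lower bounds that invoke the lemma use only the expected maximum and the fact that every string is \(\pm1\)-valued. The latter fixes the comparator diameter and the term \(\sum_t(s_t^i)^2=T\). So your code serves both applications equally well.
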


\begin{proof}
For \(N\le e^{c_0m}\), sample \(\lfloor N/2\rfloor\) independent uniform
sign strings and include their negatives.  Conditional on \(\sigma\), the
absolute inner products are independent copies of the absolute value of a
length-\(m\) symmetric random walk.  The binomial moderate-deviation lower
bound implies that their maximum exceeds \(c\sqrt{m\log N}\) with
probability bounded below by a universal constant.  Averaging over the
random codebook produces a deterministic code.  For larger \(N\), retain a
code of size \(\lfloor e^{c_0m}\rfloor\) and repeat strings until the
collection has \(N\) members; the bound then has order \(m\).  For odd
\(N\), duplicate one string after forming the sign-reversed pairs.  Neither
duplication changes the maximum.  Adjusting constants covers \(N=2\).
\end{proof}

\begin{proposition}[Finite-class convex lower bound]
\label{prop:app-finite-convex-lower}
For a fixed scalar plant and every \(N\ge2\), there is a class of \(N\)
causal policies with trajectory diameter at most \(\Delta\) and an oblivious
distribution over globally \(L\)-Lipschitz convex costs such that every
causal randomized controller has expected regret at least
\[
    cL\Delta\sqrt{T\min\{\log N,T\}}.
\]
\end{proposition}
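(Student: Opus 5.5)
We reduce to a prediction-with-experts lower bound on a scalar plant where every policy's state is pinned down and only its inputs differ. We use the temporal sign code of \cref{lem:app-temporal-code} to make one policy correlate strongly with a random sign sequence the learner cannot anticipate.

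\textbf{Construction.} Take the scalar plant \(x_{t+1}=u_t+w_t\) with \(w_t=0\), so that \(x_t^\pi=u_{t-1}^\pi\). Choose the costs to depend on the input only, say \(c_t(x,u)=L\sigma_t u\), where \(\sigma_t\) are i.i.d.\ uniform signs drawn obliviously. This cost is linear, hence convex, and globally \(L\)-Lipschitz in the product norm. Set \(m=T\) and take the code \(s^1,\dots,s^N\) from \cref{lem:app-temporal-code}. Policy \(i\) plays the open-loop input \(u_t^i=-(\Delta/2)s_t^i\), clipped to lie in \([-\Delta/2,\Delta/2]\). Then all inputs lie in an interval of length \(\Delta\), and so do all states, since each state is a previous input. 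The pairs \((x_t,u_t)\) would have diameter \(\sqrt2\Delta\); to keep diameter at most \(\Delta\), rescale the amplitude by \(1/\sqrt2\), which only affects constants. The policies are causal and trivially simulatable. If \cref{lem:app-temporal-code} produces repeated strings, the policies can be made distinct after time \(T\), as the lemma permits.

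\textbf{Regret computation.} Since \(\sigma_t\) is independent of the history before round \(t\) and of the learner's fresh randomization, every causal controller satisfies \(\E[\sigma_t u_t]=0\). Its expected cumulative cost is therefore zero. The best policy in hindsight attains
\[
\min_i\sum_t L\sigma_t u_t^i=-\frac{L\Delta}{2\sqrt2}\max_i\sum_t\sigma_t s_t^i .
\]
By \eqref{eq:app-temporal-code}, the expectation of this maximum is at least \(c\sqrt{T\min\{\log N,T\}}\), and the claimed bound follows.

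\textbf{Main obstacle.} The only delicate point is the diameter bookkeeping: the states inherit the input spread, so the amplitude has to be scaled so that the joint state--input diameter is at most \(\Delta\). If one prefers states identically zero, a plant such as \(x_{t+1}=0\cdot x_t+0\cdot u_t+w_t\) with \(w_t=0\) removes this issue entirely; we would choose whichever fixed scalar plant the paper intends. Apart from that, the argument is the standard expert lower bound, and the heavy lifting is done by the code lemma.
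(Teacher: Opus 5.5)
Your proof is correct and is essentially the paper's argument: the same oblivious random-sign costs linear in the input, the same open-loop policies built from the temporal sign code with \(m=T\), zero expected learner cost because \(\sigma_t\) is fresh, and the code lemma applied to the best comparator. The differences are minor. The paper takes \(A=B=0\), so states vanish and your \(1/\sqrt2\) rescaling is unnecessary. It also adds \(\frac L2\operatorname{dist}(u,[-r,r])\) to the cost, which keeps the learner's stage cost bounded below, so expected regret is well defined and the learner's expected cost is nonnegative even if a randomized controller plays non-integrable actions; your purely linear cost \(L\sigma_t u\) leaves that integrability case implicit.
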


\begin{proof}
Take \(A=B=0\), zero disturbances, and \(r=\Delta/2\), so the state is
identically zero.  Apply \cref{lem:app-temporal-code} with \(m=T\), and let
the policies be \(u_t^i=rs_t^i\).  Draw independent uniform signs
\(\sigma_t\) before play and put
\[
    c_t(x,u)
    =\frac L2\sigma_tu
      +\frac L2\operatorname{dist}(u,[-r,r]).
\]
The cost is convex and globally \(L\)-Lipschitz.  Since the learner chooses
\(u_t\) before seeing \(\sigma_t\), its expected stage cost is nonnegative.
The distance term vanishes on every comparator, and sign symmetry gives
\[
    \E\min_i\sum_{t=1}^Tc_t(0,u_t^i)
    \le-cLr\sqrt{T\min\{\log N,T\}}.
\]
This proves the claim after absorbing the factor \(r=\Delta/2\).
\end{proof}

\subsection{Strongly convex costs}

The convex lower bound uses only the range-sensitive exponential-weights
inequality.  Under strong convexity, the variance-sensitive inequality also
controls the reference defect and gives logarithmic regret.

If a cost is \(L\)-Lipschitz and \(\mu\)-strongly convex on a convex
comparison region, that region has bounded diameter.  Indeed, for any
\(y,y'\) in the region and \(m=(y+y')/2\),
\[
    \frac\mu8\norm{y-y'}^2
    \le\frac{c_t(y)+c_t(y')}{2}-c_t(m)
    \le\frac L2\norm{y-y'}.
\]
Thus
\begin{equation}
    \norm{y-y'}\le\frac{4L}{\mu}.
    \label{eq:app-strong-diameter}
\end{equation}

\begin{proof}[Proof of \cref{thm:pac-strong}]
Again write \(\ell_t(\pi)=c_t(y_t^\pi)\).  Let
\(
  V_t=\E_{p_t}\norm{y_t^\pi-\bar y_t}^2
\).
By \eqref{eq:app-strong-diameter}, the loss range is at most
\(4L^2/\mu\).  Hence a sufficiently small universal constant in
\eqref{eq:strong-step-size} ensures the range condition required for
\eqref{eq:app-pac-bernstein} and \eqref{eq:app-variance-movement}.
The pairwise variance identity gives
\begin{equation}
    \operatorname{Var}_{p_t}(\ell_t)\le L^2V_t.
    \label{eq:app-loss-variance}
\end{equation}
Strong convexity and \(\bar y_t=\E_{p_t}y_t^\pi\) give
\[
    c_t(\bar y_t)
    \le \E_{p_t}\ell_t-\frac{\mu}{2}V_t,
\]
which is \eqref{eq:strong-jensen}.

Apply \eqref{eq:app-variance-movement} with
\(z(\pi)=y_t^\pi\) and \(h(\pi)=x_{t+1}^\pi\).  The affine dynamics imply
\(\norm{h(\pi)-h(\pi')}\le B_+\norm{z(\pi)-z(\pi')}\), and therefore
\begin{equation}
    \norm{\delta_{t+1}}
    \le C\eta B_+LV_t.
    \label{eq:app-strong-defect}
\end{equation}
Substituting \eqref{eq:app-strong-defect} into
\eqref{eq:tracking-cost} bounds the excess physical cost over the reference
cost by
\[
    C\eta L^2\Lambda(K_0)B_+\sum_{t=1}^TV_t.
\]
Combining this estimate with \eqref{eq:app-pac-bernstein},
\eqref{eq:app-loss-variance}, and \eqref{eq:strong-jensen} yields
\[
\begin{aligned}
    \sum_{t=1}^Tc_t(y_t)
      -\E_\rho\sum_{t=1}^Tc_t(y_t^\pi)
    \le{}&\frac{\KL(\rho\|\nu)}{\eta}\\
      &+\left[-\frac\mu2
          +C\eta L^2\Gamma_{\rm tr}\right]
       \sum_{t=1}^TV_t.
\end{aligned}
\]
The prescribed step size makes the bracket nonpositive, proving
\eqref{eq:pac-strong}.  The finite-class conclusion follows by taking the
uniform prior, a point-mass posterior, and
\(\eta=c\mu/(L^2\Gamma_{\rm tr})\).
\end{proof}

The lower bound below requires Lipschitzness only on the bounded region
containing the learner and comparator trajectories.  Global Lipschitzness is
impossible for a globally strongly convex function on an unbounded Euclidean
space.

\begin{proposition}[Finite-class strongly convex lower bound]
\label{prop:app-finite-strong-lower}
For \(q=\min\{\log N,T\}\), a fixed scalar plant admits \(N\) policies and
oblivious \(\mu\)-strongly convex quadratic costs that are \(L\)-Lipschitz on
the comparison region and force regret at least
\(c(L^2/\mu)q\).
\end{proposition}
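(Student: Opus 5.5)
Mirroring \cref{prop:app-finite-convex-lower}, I would take the degenerate scalar plant \(A=B=0\) with zero disturbances, so that every state is identically zero and each policy reduces to an open-loop input sequence. Set \(r=L/(2\mu)\) and let the policies be \(u_t^i=r s_t^i\), with \(s^1,\ldots,s^N\) the temporal sign code of \cref{lem:app-temporal-code}. Restricting the code to its first \(m\le T\) coordinates is harmless, since policies may agree afterward. The costs are
\[
    c_t(x,u)=\frac\mu2\bigl(x^2+(u-r\sigma_t)^2\bigr),
\]
where \(\sigma_t\) are independent uniform signs on the first \(m\) rounds and \(\sigma_t=0\) afterward. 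These costs are \(\mu\)-strongly convex quadratics.

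\textbf{Lipschitz constant on the comparison region.} The gradient in \(u\) is \(\mu(u-r\sigma_t)\). It is enough to check the constant on a region containing \([-r,r]\) in \(u\), plus whatever the learner plays. Any \(|u|>r\) can be clipped to \(\pm r\), which only lowers every cost, so without loss of generality the learner stays in \([-r,r]\). On \([-r,r]\) the gradient is at most \(2\mu r=L\).

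\textbf{Learner's cost.} Since \(u_t\) is chosen before \(\sigma_t\) is revealed,
\[
    \E\,c_t(0,u_t)=\frac\mu2\bigl(u_t^2+r^2\bigr)\ge\frac{\mu r^2}{2}.
\]

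\textbf{Comparator's cost.} For policy \(i\),
\[
    c_t(0,rs_t^i)=\frac{\mu r^2}{2}\bigl(2-2\sigma_t s_t^i\bigr).
\]
Hence the best policy has cost \(\mu r^2 m-\mu r^2\max_i\sum_t\sigma_t s^i_t\), which compares to \(\mu r^2 m/2\) for the learner. This naive comparison is wrong: the comparator pays \(\mu r^2\) per round on average, more than the learner. The fix is to let the learner-favourable baseline be matched, as described next.

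\textbf{Revised scaling.} Use \(\sigma_t\) with bias, as in the standard Hazan--Kale expert-with-strong-convexity lower bound. Draw a hidden index \(J\) uniformly from \(N\) and set \(\sigma_t=\epsilon s^J_t+\) noise. Take signals \(z_t=r s^J_t\) with probability \((1+\epsilon)/2\) and \(-rs^J_t\) otherwise, and costs \(\frac\mu2(u-z_t)^2\). The comparator \(J\) gains \(\mu r^2\epsilon\) per round over \(u=0\). A Fano/KL argument with \(\epsilon\asymp\sqrt{q/m}\), \(m=T\) (or \(m=q\) rounds when \(\log N>T\)), shows that any learner must pay \(\Omega(\mu r^2\epsilon^2)\) per round until it identifies \(J\), which requires \(\Omega(q/\epsilon^2)\) rounds. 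Choosing \(m\asymp q/\epsilon^2\) with \(\epsilon\) constant then gives regret \(\Omega(\mu r^2 q)=\Omega((L^2/\mu)q)\).

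\textbf{Main obstacle.} The hard step is this information-theoretic estimate. I would use the fact that the learner's excess squared loss is \(\frac\mu2(u_t-\E[z_t\mid J])^2\), while the posterior over \(J\) after \(t\) rounds remains spread until the mutual information reaches about \(\log N\). Each round reveals only \(O(\epsilon^2)\) nats. Taking \(\epsilon\) as a small constant and \(m=\min\{c\log N,T\}\) therefore makes the learner's predictions carry constant error for \(\Omega(q)\) rounds.
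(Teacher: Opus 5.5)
Your final construction fails at its central claim that comparator \(J\) ``gains \(\mu r^2\epsilon\) per round over \(u=0\).'' With \(u_t=rs_t^J\) and \(z_t=\pm rs_t^J\), the loss \(\frac\mu2(u_t-z_t)^2\) equals \(0\) with probability \((1+\epsilon)/2\) and \(2\mu r^2\) otherwise, so its mean is \(\mu r^2(1-\epsilon)\). The zero action always pays \(\frac\mu2r^2\). For the small constant \(\epsilon\) you propose, comparator \(J\) is therefore \emph{worse} than the zero action by \(\mu r^2(\frac12-\epsilon)\) per round. The learner \(u_t\equiv0\) then has negative regret against \(J\), and any bound obtained by comparing with \(J\) is vacuous, whatever Fano gives. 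This is the defect you correctly diagnosed in your first attempt: a policy whose amplitude equals the signal amplitude pays a quadratic penalty of order \(\mu r^2\) per round that its correlation gain cannot cover. In the Hazan--Kale-type argument you are recalling, the comparator plays the conditional mean \(\E[z_t\mid J]=r\epsilon s_t^J\). Even with that correction, the step ``the learner must pay \(\Omega(\mu r^2\epsilon^2)\) per round until it identifies \(J\)'' remains a heuristic. Predicting the next coordinate \(s_t^J\) is not the same as identifying \(J\), and whether it is hard depends on the code. \Cref{lem:app-temporal-code} only guarantees a large expected maximum correlation, not that noisy past coordinates are uninformative about \(s_t^J\).

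The missing idea is to decouple the two amplitudes; after that, no information-theoretic argument is needed. The paper keeps fresh unbiased signs and the cost \(\frac\mu2[x^2+(u-a\sigma_t)^2]\) with signal amplitude \(a\), so \(L\le2\mu a\) after clipping actions to \([-a,a]\). It uses the code of \cref{lem:app-temporal-code} with \(m=T\) and the much smaller policy amplitude \(r=c_0a\sqrt{q/T}/2\). Expanding squares, the learner pays at least \(\frac\mu2a^2\) per round in expectation. The best policy pays \(\frac\mu2(a^2+r^2)T-\mu ar\max_i\sum_t\sigma_ts_t^i\). Hence \(\E\Reg_T\ge\mu ar\,\E\max_i\sum_t\sigma_ts_t^i-\frac{\mu Tr^2}{2}\ge c\mu a^2q\); the choice \(r\asymp a\sqrt{q/T}\) balances the linear gain against the quadratic penalty. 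Alternatively, your first construction can be repaired directly. Its failure relied on \(\max_i\sum_t\sigma_ts_t^i\ll m\), which is false once \(m\lesssim\log N\). Take \(m=\min\{\lfloor\log_2N\rfloor,T\}\ge cq\). Let the policies realize all \(2^m\le N\) sign patterns on rounds \(1,\ldots,m\) and play \(0\) afterward, with \(\sigma_t=0\) for \(t>m\) as you already set. Some policy then matches \(\sigma\) exactly and pays \(0\). The learner pays at least \(\frac{\mu r^2}{2}\) on each of the first \(m\) rounds, so the regret is at least \(\frac{\mu r^2}{2}m=\frac{L^2}{8\mu}m\).
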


\begin{proof}
Take \(A=B=0\), zero disturbances, and choose \(a>0\).  Let
\(c_t(x,u)=\frac\mu2[x^2+(u-a\sigma_t)^2]\), where the \(\sigma_t\)'s are
independent uniform signs drawn before play.  Put
\(r=c_0a\sqrt{q/T}/2\), apply \cref{lem:app-temporal-code} with \(m=T\),
and use the policies \(u_t^i=rs_t^i\).  Projection of a learner's action onto
\([-a,a]\) can only decrease every stage cost, so it is enough to consider
physical actions in that interval.  On the resulting comparison region the
costs are \(L\)-Lipschitz with \(L\le2\mu a\).

The fresh sign has mean zero conditional on the learner's past, whereas the
best codeword correlates with the complete sign sequence.  Expanding the
squares gives
\[
\begin{aligned}
    \E\Reg_T
    &\ge
      \mu ar\E\max_i\sum_{t=1}^T\sigma_ts_t^i
      -\frac{\mu Tr^2}{2}\\
    &\ge c\mu a^2q
     \ge c'\frac{L^2}{\mu}q,
\end{aligned}
\]
after fixing the universal constant \(c_0\) sufficiently small and taking
\(L=2\mu a\).
\end{proof}

This completes the general-policy PAC-Bayes analysis.  We next prove the
computational convex-cost result for the stable linear state-feedback class.

\section{The stable linear state-feedback class under convex costs}
\label{app:stable-convex}

We prove the upper bound in two steps.  First, we analyze projected gradient
descent in a decay-weighted response norm and bound the cost of tracking its
changing reference.  Second, we embed each comparator in
\(\cK_{\kappa,\gamma}\) and truncate its response.  We then prove the lower
bound on a scalar plant.

We first justify the conditioning convention used in the theorem.  The
\(r=0\) terms in \(\Lambda(K_0)\) and \(\Lambda_B(K_0)\) control
\(\opnorm{K_0}\) and \(\opnorm B\), respectively.  The \(r=1\) term in
\(\Lambda(K_0)\) controls \(\opnorm{F_0}\), and
\(A=F_0+BK_0\) then controls \(\opnorm A\) and \(B_+\).
Moreover,
\[
    \Delta_K
    \le\sqrt{\min\{d_x,d_u\}}\,
       \bigl(\kappa+\opnorm{K_0}\bigr).
\]
Thus every tracker constant below is bounded in terms of the fixed
dimensions, \(\kappa\), and \(\trackcond(A,B)\) under
\eqref{eq:conditioned-tracker}.  If the stable class is nonempty, any
\(K\in\cK_{\kappa,\gamma}\) is Schur stable and has finite
\(\Lambda(K)\) and \(\Lambda_B(K)\), so the infimum defining
\(\trackcond(A,B)\) is finite.

We now construct the weighted response class used in
\cref{thm:convex-linear}.  Fix a horizon \(H\), write
\(\rho=\sqrt{1-\gamma}\), and, for
\(M=(M^{[1]},\ldots,M^{[H]})\), define
\begin{equation}
\begin{aligned}
    v_t(M)&=\sum_{i=1}^HM^{[i]}w_{t-i},\\
    x_{t+1}^M&=F_0x_t^M+Bv_t(M)+w_t,\\
    u_t^M&=-K_0x_t^M+v_t(M),
    \qquad x_1^M=0,
\end{aligned}
    \label{eq:app-dac-state}
\end{equation}
where \(w_s=0\) for \(s\le0\).  The loss
\(f_t(M)=c_t(x_t^M,u_t^M)\) is convex because the displayed trajectory is
affine in \(M\).  Use the weighted norm in \eqref{eq:weighted-norm} and set
\begin{equation}
\begin{aligned}
    S_{\rho,H}^2&=\sum_{i=1}^H\rho^{2(i-1)},\\
    C_0^2&=\Lambda_B(K_0)^2+
       \bigl(1+\opnorm{K_0}\Lambda_B(K_0)\bigr)^2,\\
    \Psi_0^2&=C_0^2+2\Lambda(K_0)\Lambda_B(K_0)C_0.
\end{aligned}
    \label{eq:app-response-constants}
\end{equation}

Let \(\Delta_K=\sup_{K\in\cK_{\kappa,\gamma}}
\fnorm{K-K_0}\) and
\begin{equation}
    R_\gamma=\frac{\kappa^2\Delta_K}{\sqrt\gamma}.
    \label{eq:app-response-radius}
\end{equation}
Starting from \(M_1=0\), the algorithm applies
\begin{equation}
    u_t=-K_0x_t+v_t(M_t)
    \label{eq:app-weighted-control}
\end{equation}
and, after observing \(c_t\) and recovering \(w_t\), performs projected
gradient descent on \(f_t\) over
\(\{M:\norm M_\rho\le R_\gamma\}\), with projection and gradient taken in
the weighted inner product.  Explicitly,
\begin{equation}
    M_{t+1}=\operatorname{Proj}^{\rho}_{R_\gamma}
       (M_t-\eta g_t),
    \qquad g_t\in\partial_\rho f_t(M_t).
    \label{eq:app-weighted-update}
\end{equation}
This is ordinary Euclidean projected gradient descent after the diagonal
change of variables
\(M^{[i]}\mapsto\rho^{-(i-1)}M^{[i]}\).

We first bound every fixed reference.  If \(R\) is the optimization radius and
\(U_R=WRS_{\rho,H}\), every fixed reference obeys
\begin{equation}
    \norm{(x_t^M,u_t^M)}
    \le (1+\opnorm{K_0})
       \bigl(\Lambda_x(K_0)W+\Lambda_B(K_0)U_R\bigr)+U_R.
    \label{eq:app-reference-radius}
\end{equation}

\subsection{Response geometry and tracking}

The weighted norm above determines both the Lipschitz constant of the
counterfactual loss and the reference defect created by one projected-gradient
update.  We bound these two quantities in turn.

Weighted Cauchy--Schwarz gives, for every \(M,N\),
\begin{equation}
    \norm{v_t(M)-v_t(N)}
    \le WS_{\rho,H}\norm{M-N}_\rho.
    \label{eq:app-input-lipschitz}
\end{equation}
Passing this perturbation through \eqref{eq:app-dac-state} yields
\begin{equation}
\begin{aligned}
    \norm{x_t^M-x_t^N}
       &\le WS_{\rho,H}\Lambda_B(K_0)\norm{M-N}_\rho,\\
    \norm{(x_t^M,u_t^M)-(x_t^N,u_t^N)}
       &\le WS_{\rho,H}C_0\norm{M-N}_\rho.
\end{aligned}
    \label{eq:app-response-lipschitz}
\end{equation}
Consequently,
\(
  \norm{g_t}_\rho\le LWS_{\rho,H}C_0
\)
whenever the relevant region is \(L\)-Lipschitz.

Let \(e_t=x_t-x_t^{M_t}\).  The physical controller and the current
reference both use the additive input \(v_t(M_t)\).  Subtracting their state
equations gives
\begin{equation}
    e_{t+1}=F_0e_t+x_{t+1}^{M_t}-x_{t+1}^{M_{t+1}}.
    \label{eq:app-response-error}
\end{equation}
Because the weighted coefficient ball \(\{M:\norm M_\rho\le R\}\) has
diameter \(2R\), every update satisfies
\(\norm{M_{t+1}-M_t}_\rho\le2R\).  The trajectory-sensitivity bound
\eqref{eq:app-response-lipschitz}, the recursion
\eqref{eq:app-response-error}, and the tracker's impulse-response sum therefore
bound the state--input tracking error by
\(2\Lambda(K_0)\Lambda_B(K_0)U_R\).  Adding this to
\eqref{eq:app-reference-radius} gives a ball containing the physical
trajectory.  When \(R=R_\gamma\), the reference and physical radii are
\(O_{\rm sys}(W/\gamma)\).  Every use of local Lipschitzness below is on this
common deterministic ball.
Equations~\eqref{eq:tracking-convolution} and
\eqref{eq:app-response-lipschitz} imply
\begin{equation}
\begin{aligned}
    \sum_{t=1}^T
       \bigl[c_t(x_t,u_t)-c_t(x_t^{M_t},u_t^{M_t})\bigr]
    \le{}&L\Lambda(K_0)WS_{\rho,H}\Lambda_B(K_0)\\
      &\quad\times
       \sum_{t=1}^{T-1}\norm{M_{t+1}-M_t}_\rho.
\end{aligned}
    \label{eq:app-response-tracking}
\end{equation}

The standard projected-gradient inequalities are
\begin{align}
    \sum_{t=1}^T[f_t(M_t)-f_t(M)]
    &\le\frac{\norm M_\rho^2}{2\eta}
       +\frac\eta2\sum_{t=1}^T\norm{g_t}_\rho^2,
       \label{eq:app-ogd-regret}\\
    \norm{M_{t+1}-M_t}_\rho
    &\le\eta\norm{g_t}_\rho.
       \label{eq:app-ogd-movement}
\end{align}
Combining these displays, using \eqref{eq:app-response-constants}, and
choosing
\begin{equation}
    \eta=\frac{R_\gamma}
      {LWS_{\rho,H}\Psi_0\sqrt T}
    \label{eq:app-stable-step}
\end{equation}
gives, for every \(M\) in the weighted coefficient ball
\(\{M:\norm M_\rho\le R_\gamma\}\),
\begin{equation}
    \sum_{t=1}^Tc_t(x_t,u_t)
      -\sum_{t=1}^Tc_t(x_t^M,u_t^M)
    \le LWR_\gamma S_{\rho,H}\Psi_0\sqrt T.
    \label{eq:app-response-regret}
\end{equation}
In \eqref{eq:app-response-constants}, the term \(C_0^2\) bounds the
squared-gradient contribution, while
\(2\Lambda(K_0)\Lambda_B(K_0)C_0\) bounds the tracking contribution to regret
caused by the updates.  Their sum is \(\Psi_0^2\), which gives
\eqref{eq:app-response-regret} after substituting
\eqref{eq:app-stable-step}.

\subsection{Embedding and truncation}

The preceding bound competes with any coefficient vector in the optimization
ball.  To compare with a feedback gain \(K\), we embed its infinite
disturbance response in that ball and bound the tail omitted after \(H\)
blocks.

Fix \(K\in\cK_{\kappa,\gamma}\), set \(F_K=A-BK\), and define
\(
  M_K^{[i]}=(K_0-K)F_K^{i-1}
\).
The disturbance expansion gives
\[
    \sum_{i\ge1}M_K^{[i]}w_{t-i}
    =(K_0-K)x_t^K,
\]
so the infinite response produces exactly \(u_t^K=-Kx_t^K\).  Moreover,
\begin{equation}
\begin{aligned}
    \norm{M_{K,H}}_\rho^2
    &\le\kappa^4\fnorm{K-K_0}^2
       \sum_{i=1}^H(1-\gamma)^{i-1}
     \le\frac{\kappa^4\fnorm{K-K_0}^2}{\gamma},\\
    S_{\rho,H}&\le\gamma^{-1/2}.
\end{aligned}
    \label{eq:app-embedding-bounds}
\end{equation}
Thus \(M_{K,H}\) belongs to the optimization ball.  Truncating after \(H\)
omits the tail input \(\sum_{i>H}M_K^{[i]}w_{t-i}\), whose norm is at most
\[
    W\kappa^2\fnorm{K-K_0}
       \frac{(1-\gamma)^H}{\gamma}.
\]
The state error is this tail input passed through the baseline closed-loop
response, while the action error also contains the tail input directly.
Therefore
\begin{equation}
    \sup_t\norm{y_t^{M_{K,H}}-y_t^K}
    \le C_0W\kappa^2\fnorm{K-K_0}
       \frac{(1-\gamma)^H}{\gamma}.
    \label{eq:app-truncation}
\end{equation}

\begin{proof}[Proof of \cref{thm:convex-linear}]
Apply \eqref{eq:app-response-regret} to \(M_{K,H}\), use
\eqref{eq:app-embedding-bounds}, and add the Lipschitz cost of
\eqref{eq:app-truncation}.  Uniformly over \(K\),
\begin{equation}
\begin{aligned}
    \sum_{t=1}^Tc_t(y_t)-\sum_{t=1}^Tc_t(y_t^K)
    \le{}&
      \frac{LW\kappa^2\Delta_K\Psi_0}{\gamma}\sqrt T\\
      &+LC_0W\kappa^2\Delta_K
         \frac{T(1-\gamma)^H}{\gamma}.
\end{aligned}
    \label{eq:app-stable-explicit}
\end{equation}
Choose the integer
\[
    H=\left\lceil
       \frac1\gamma
       \log\!\left(2+\frac{C_0\sqrt T}{\Psi_0}\right)
      \right\rceil.
\]
Since \((1-\gamma)^H\le e^{-\gamma H}\), the truncation term is at most
the first term in \eqref{eq:app-stable-explicit}.  Under the conditioned tracker,
\(\Delta_K,C_0,\Psi_0=O_{\rm sys}(1)\), proving
\eqref{eq:global-lip-cor}.

Under regular costs, \eqref{eq:app-reference-radius} bounds every reference,
the pointwise tracking bound above bounds the physical trajectory, and
\eqref{eq:comparator-radius} bounds every comparator.  All three lie in a
ball of radius \(D_\gamma\le C_{\rm sys}W/\gamma\).  The gradient bound makes
each cost \(GD_\gamma\)-Lipschitz on this ball.  Setting \(L=GD_\gamma\) in
\eqref{eq:app-stable-explicit} proves \eqref{eq:GD-cor}.
\end{proof}

\begin{proof}[Proof of \cref{thm:convex-lower}]
Consider \(x_{t+1}=u_t+W\), with gains
\(K^{(0)}=0\) and \(K^{(1)}=-(1-\gamma)\).  Both lie in
\(\cK_{1,\gamma}\), and, for \(t\ge2\),
\[
    x_t^{(0)}=W,
    \qquad
    x_t^{(1)}=\frac W\gamma
      \bigl[1-(1-\gamma)^{t-1}\bigr].
\]
For \(t\ge1+\lceil2/\gamma\rceil\), their separation is at least
\(c_0W/\gamma\).  Hence \(T\ge C/\gamma\) implies
\begin{equation}
    \sum_{t=1}^T
      \bigl(x_t^{(1)}-x_t^{(0)}\bigr)^2
    \ge cW^2\gamma^{-2}T.
    \label{eq:app-stable-separation}
\end{equation}

Draw independent uniform signs \(\sigma_t\) before play.  For
\(c_t(x,u)=\sigma_tLx\), the learner's expected cost is zero because \(x_t\)
is fixed before \(\sigma_t\) is revealed.  Symmetry and the Khintchine
inequality give
\[
\begin{aligned}
    \E\min_{j\in\{0,1\}}
       \sum_{t=1}^T\sigma_tLx_t^{(j)}
    &=-\frac L2\E\left|
       \sum_{t=1}^T\sigma_t(x_t^{(1)}-x_t^{(0)})\right|\\
    &\le-cLW\gamma^{-1}\sqrt T.
\end{aligned}
\]
This proves \eqref{eq:lower-L}.  For the regular-cost statement, take
\(D=2W/\gamma\) and \(c_t(x,u)=\sigma_tGD\,x\).  The gradient norm is
exactly \(GD\).  Replacing \(L\) by \(GD\) in the displayed Khintchine bound
proves \eqref{eq:lower-GD}.  Both calculations remain valid after conditioning
on any fixed private random seed, so the lower bounds also apply to randomized
learners.  All signs are drawn before play, so the distribution is oblivious.
\end{proof}

The convex analysis is now complete.  We next combine the general PAC-Bayes
theorem with a finite cover of the gain matrices in the stable linear
state-feedback class to obtain logarithmic regret under strong convexity.

\section{The stable linear state-feedback class under strongly convex costs}
\label{app:stable-strong}

We first justify the scales in \eqref{eq:strong-scales}.  Every comparator
pair satisfies
\[
    \norm{y_t^K}\le\sqrt{1+\kappa^2}\,X_\gamma.
\]
By convexity of the norm, the Gibbs reference---a barycenter of comparator
trajectories---has the same bound.  Two successive distributions on gains can
change the next-state barycenter by at most \(2X_\gamma\).  The tracker filters
these reference defects, and its impulse-response sum bounds the state--input
tracking error by \(2\Lambda(K_0)X_\gamma\).  Thus the comparator trajectories,
the Gibbs reference, and the physical trajectory all lie in the
radius-\(D_{\rm sc}\) ball.

We also prove \eqref{eq:K-lipschitz}.  Let
\(\Delta_t=x_t^K-x_t^{K'}\).  Then
\[
    \Delta_{t+1}
    =(A-BK)\Delta_t+B(K'-K)x_t^{K'}.
\]
The power-stability bound and
\(\sup_t\norm{x_t^{K'}}\le X_\gamma\) give
\[
    \sup_t\norm{\Delta_t}
    \le
    \frac{\kappa^4\opnorm B\,W}{\gamma^2}
       \fnorm{K-K'}.
\]
Moreover,
\[
    \norm{u_t^K-u_t^{K'}}
    \le\kappa\norm{\Delta_t}
       +X_\gamma\fnorm{K-K'}.
\]
Combining these inequalities proves the claimed value of \(L_{\cK}\).

\begin{proof}[Proof of \cref{thm:strong-linear}]
Let \(\varepsilon>0\) and let \(\cN_\varepsilon\) be a maximal
\(\varepsilon\)-separated subset of \(\cK_{\kappa,\gamma}\) in Frobenius
norm.  It is an \(\varepsilon\)-net.  Since the gain class is contained in
the Frobenius ball of radius
\(R_K=\kappa\sqrt{\min\{d_x,d_u\}}\), the volumetric estimate gives
\begin{equation}
    \log\abs{\cN_\varepsilon}
    \le d_K\log\!\left(1+\frac{2R_K}{\varepsilon}\right).
    \label{eq:app-cover-entropy}
\end{equation}
Apply counterfactual tracking with exponential weights to the exact
counterfactual trajectories of the gains in \(\cN_\varepsilon\), using the
uniform prior and \(\eta=c\mu/(L^2\Gamma_{\rm tr})\).  The comparator
trajectories, Gibbs references, and physical trajectory all lie in the
radius-\(D_{\rm sc}\) ball, so the local Lipschitz and strong-convexity
assumptions apply to every point used by the algorithm.
\Cref{thm:pac-strong} therefore gives regret at
most
\[
    C\mathfrak A_{\rm sc}\log\abs{\cN_\varepsilon}
\]
relative to the best gain in the net.

For every \(K\in\cK_{\kappa,\gamma}\), choose
\(\widehat K\in\cN_\varepsilon\) with
\(\fnorm{K-\widehat K}\le\varepsilon\).  By
\eqref{eq:K-lipschitz},
\[
    \sum_{t=1}^T
       \abs{c_t(y_t^K)-c_t(y_t^{\widehat K})}
    \le TLL_{\cK}\varepsilon.
\]
Choose
\[
    \varepsilon
    =\min\!\left\{
       R_K,\,
       \frac{\mathfrak A_{\rm sc}d_K}{TLL_{\cK}}
      \right\},
\]
where the fraction is interpreted as \(+\infty\) when \(TLL_{\cK}=0\).
The regret relative to any \(K\in\cK_{\kappa,\gamma}\) is therefore at most
\[
    C\mathfrak A_{\rm sc}d_K
      \log\!\left(1+\frac{2R_K}{\varepsilon}\right)
    +TLL_{\cK}\varepsilon.
\]
Adding the net regret and approximation loss, then substituting the chosen
\(\varepsilon\), proves \eqref{eq:strong-linear}.

Finally, the conditioned tracker gives
\[
    L=GD_{\rm sc}\le C_{\rm sys}\frac{GW}{\gamma},
    \qquad
    L_{\cK}\le C_{\rm sys}\frac{W}{\gamma^2},
    \qquad
    \mathfrak A_{\rm sc}
       \le C_{\rm sys}\frac{G^2W^2}{\mu\gamma^2}.
\]
Furthermore,
\[
    \frac{R_KTLL_{\cK}}
         {\mathfrak A_{\rm sc}d_K}
    \le
    \frac{C_{\rm sys}\mu T}{Gd_K\gamma}.
\]
Substitution proves \eqref{eq:strong-linear-scale}.
\end{proof}

\subsection{A fixed-dimensional lower bound}

The upper bound grows as \(\gamma^{-2}\log T\).  We now show that this
dependence on the stability margin and horizon is unavoidable even in fixed
dimension.

The proof uses the following one-dimensional Bayesian information
inequality \cite{vantrees1968}.  We include the short argument to make clear
that the lower bound allows randomized controllers.

\begin{lemma}[Van Trees inequality]
\label{lem:app-van-trees}
Let \(\vartheta\) have an absolutely continuous density \(\varphi\) on
\([a,b]\), positive in the interior and zero at the endpoints, with
\[
    I_\varphi=\int_a^b\frac{\varphi'(v)^2}{\varphi(v)}\,dv<\infty.
\]
Conditionally on \(\vartheta=v\), let \(Z_1,\ldots,Z_k\) be independent
observations with differentiable conditional densities \(f_s(z\mid v)\).
Assume their scores have conditional mean zero, differentiation may be
interchanged with integration, and
\(I_s(v)=\E[(\partial_v\log f_s(Z_s\mid v))^2\mid\vartheta=v]\) is finite.
Then every possibly randomized estimator \(\widehat\vartheta\) based on
these observations satisfies
\begin{equation}
    \E(\widehat\vartheta-\vartheta)^2
    \ge
    \frac{1}{I_\varphi+\sum_{s=1}^k\E I_s(\vartheta)}.
    \label{eq:app-van-trees}
\end{equation}
\end{lemma}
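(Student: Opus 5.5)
The plan is the standard van Trees argument: a Cauchy--Schwarz inequality between the estimation error and the total score, with the expectation taken jointly over the prior and the data. Write \(S(v,z)=\partial_v\log\bigl[\varphi(v)\prod_{s=1}^k f_s(z_s\mid v)\bigr]\) for the joint score of \((\vartheta,Z)\), where \(z=(z_1,\ldots,z_k)\). Randomized estimators are handled by conditioning on the auxiliary seed \(U\), which is independent of \((\vartheta,Z)\). I would prove the bound for every fixed value of \(U\), that is, for deterministic estimators \(\widehat\vartheta(z)\). The quantity \(\E(\widehat\vartheta-\vartheta)^2\) is then an average over \(U\) of per-seed errors, and each per-seed error has the same lower bound, so the bound holds after averaging.

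\emph{Step 1 (the covariance identity).} The claim is \(\E[(\widehat\vartheta(Z)-\vartheta)S(\vartheta,Z)]=1\). Write the joint density as \(g(v,z)=\varphi(v)\prod_s f_s(z_s\mid v)\). Then
\[
  \E[(\widehat\vartheta-\vartheta)S]=\iint(\widehat\vartheta(z)-v)\,\partial_vg(v,z)\,dv\,dz .
\]
Integrate by parts in \(v\) over \([a,b]\) for each fixed \(z\). The boundary term vanishes because \(\varphi(a)=\varphi(b)=0\). What remains is \(\iint g\,dv\,dz=1\), since \(\partial_v(\widehat\vartheta(z)-v)=-1\). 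This is the step that needs care. I would justify it using absolute continuity of \(\varphi\), the assumed interchange of differentiation and integration, and Fubini to swap the \(z\)- and \(v\)-integrals. Integrability follows from \(\E S^2<\infty\), which Step 2 establishes, together with the assumption that \(\widehat\vartheta\) has finite second moment; if it does not, the inequality is trivial.

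\emph{Step 2 (computing \(\E S^2\)).} Decompose the score as \(S=\varphi'(\vartheta)/\varphi(\vartheta)+\sum_s\partial_v\log f_s(Z_s\mid\vartheta)\). Conditionally on \(\vartheta=v\), each per-observation score has mean zero, and the observations are conditionally independent. Hence all cross terms vanish, both prior-times-data and data-times-data for \(s\ne s'\), after conditioning on \(\vartheta\). This gives \(\E S^2=I_\varphi+\sum_s\E I_s(\vartheta)\).

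\emph{Step 3 (conclusion).} Cauchy--Schwarz applied to Step 1 gives \(1\le\E(\widehat\vartheta-\vartheta)^2\,\E S^2\). Substituting Step 2 yields \eqref{eq:app-van-trees}.

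The main obstacle is the rigorous integration by parts in Step 1, in particular the behaviour near the endpoints where \(\varphi\) vanishes. I would handle it by treating \(v\mapsto(\widehat\vartheta(z)-v)\varphi(v)\prod_sf_s\) as absolutely continuous and invoking the stated regularity assumptions directly, rather than proving extra tail estimates.
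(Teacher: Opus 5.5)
Your proposal is correct and follows the paper's proof essentially step for step: you form the joint score \(S=\partial_v\log q\), integrate by parts in \(v\) using the vanishing endpoint density to get \(\E[(\widehat\vartheta-\vartheta)S]=1\), apply Cauchy--Schwarz, and use orthogonality of the prior score and the conditionally independent observation scores to compute \(\E S^2=I_\varphi+\sum_s\E I_s(\vartheta)\). You handle randomization by conditioning on the independent seed, which is equivalent to the paper's device of appending the seed as an extra observation whose density does not depend on \(v\).
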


\begin{proof}
An independent random seed may be appended to the observations, so it is
enough to treat a deterministic estimator.  If \(q(v,z)\) is the joint
density and \(S=\partial_v\log q\), integration by parts, using the
vanishing endpoint density, gives
\[
    \E[(\widehat\vartheta-\vartheta)S]=1.
\]
Cauchy--Schwarz gives
\(1\le\E(\widehat\vartheta-\vartheta)^2\E S^2\).
The prior score and conditionally independent observation scores are
orthogonal, whence
\(\E S^2=I_\varphi+\sum_s\E I_s(\vartheta)\).  This proves
\eqref{eq:app-van-trees}.
\end{proof}

\begin{proof}[Proof of \cref{thm:strong-linear-lower}]
Consider the one-state, two-input plant
\begin{equation}
    x_{t+1}=x_t+u_{1,t}+u_{2,t}+W,
    \qquad x_1=0.
    \label{eq:app-strong-lower-system}
\end{equation}
The tracker \(K_0=(1,0)^\top\) makes \(A-BK_0=0\).  Put
\[
    X=\frac W\gamma,\qquad
    \alpha_t=1-(1-\gamma)^{t-1},
\]
and, for \(\theta\in[-1/2,1/2]\), define
\[
    K_\theta=(\gamma-\theta,\theta)^\top.
\]
Then \(A-BK_\theta=1-\gamma\), and, for
\(\gamma\le1/4\),
\[
    \opnorm{K_\theta}^2
    =(\gamma-\theta)^2+\theta^2
    \le\frac{13}{16}.
\]
Thus \(K_\theta\in\cK_{1,\gamma}\).  All these comparators have the same
state \(\bar x_t=X\alpha_t\), while
\[
    u_{1,t}^\theta=-(\gamma-\theta)\bar x_t,
    \qquad
    u_{2,t}^\theta=-\theta\bar x_t.
\]

Draw \(\theta\) from the density
\[
    \varphi(\theta)
    =2\cos^2(\pi\theta)\,
       \mathbf 1_{[-1/2,1/2]}(\theta).
\]
Conditionally on \(\theta\), draw independent signs \(\xi_t\) with
\[
    \Pr(\xi_t=+1\mid\theta)=\frac{1-\alpha_t\theta}{2}.
\]
All random variables are drawn before play.  Define
\begin{equation}
\begin{aligned}
    c_t(x,u_1,u_2)
    =\frac\mu2\bigl[&
       (x-\bar x_t)^2\\
       &+(u_1+\gamma\bar x_t+X\xi_t)^2
        +(u_2-X\xi_t)^2\bigr].
\end{aligned}
    \label{eq:app-strong-lower-cost}
\end{equation}
The Hessian is \(\mu I_3\), so the costs are globally
\(\mu\)-strongly convex.  Their centers have norm at most \(2X\).
Consequently, on the radius-\(D=4X\) ball,
\[
    \norm{\nabla c_t(y)}
    \le\frac32\mu D\le2\mu D.
\]

The conditional mean satisfies
\(\E[X\xi_t\mid\theta]=-\theta\bar x_t\).  Subtract the cost of \(K_\theta\)
and condition on \(\theta\) and the learner's past.  The variance terms
involving \(X\xi_t\) are identical for the learner and comparator and cancel,
while the learner's state-square term is nonnegative and may be discarded.
The remaining conditional expectation is
\begin{equation}
\begin{aligned}
    \E\Reg_T
    \ge\frac\mu2\sum_{t=1}^T\E\bigl[
       &(u_{1,t}+\gamma\bar x_t-\theta\bar x_t)^2\\
       &+(u_{2,t}+\theta\bar x_t)^2\bigr].
\end{aligned}
    \label{eq:app-strong-excess}
\end{equation}
For \(t\ge2\), define the past-measurable estimator
\[
    \widehat\theta_t
    =\frac{u_{1,t}+\gamma\bar x_t-u_{2,t}}{2\bar x_t}.
\]
Projection onto the action direction \((1,-1)\) shows that the bracket in
\eqref{eq:app-strong-excess} is at least
\(2\bar x_t^2(\widehat\theta_t-\theta)^2\).

At round \(t\), the learner's entire history is measurable with respect to
\(\xi_1,\ldots,\xi_{t-1}\) and an independent private random seed.  The
plant and the revealed past costs contain no additional observation whose
law depends on \(\theta\).  Hence \(\widehat\theta_t\) is an estimator based
only on the first \(t-1\) sign observations, exactly as required by
\cref{lem:app-van-trees}.

Direct calculation gives
\[
    I_\varphi=4\pi^2,
    \qquad
    I_s(\theta)
    =\frac{\alpha_s^2}{1-\alpha_s^2\theta^2}
    \le\frac43\alpha_s^2.
\]
\Cref{lem:app-van-trees} therefore implies
\[
    \E\!\left[
       \bar x_t^2(\widehat\theta_t-\theta)^2
    \right]
    \ge
    \frac{X^2\alpha_t^2}
      {I_\varphi+\frac43\sum_{s<t}\alpha_s^2}.
\]
Set \(S_t=I_\varphi+\frac43\sum_{s\le t}\alpha_s^2\).  Since
\(\log(1+r)\le r\),
\[
    \sum_{t=1}^T
    \frac{\alpha_t^2}
      {I_\varphi+\frac43\sum_{s<t}\alpha_s^2}
    \ge
    \frac34\log\!\left(
       1+\frac{4}{3I_\varphi}\sum_{t=1}^T\alpha_t^2
    \right).
\]
For \(t\ge1+\lceil1/\gamma\rceil\),
\(\alpha_t\ge1-e^{-1}\).  Hence \(T\ge C/\gamma\) makes the final display
at least \(c\log T\).  Combining the estimates and using
\(X=W/\gamma\) proves \eqref{eq:strong-linear-lower}.  The construction is
oblivious and the van Trees argument already includes private
randomization.
\end{proof}

\section{Proofs for the system-level response ball}
\label{sls:app-sls}

Recall that \(\cS(R)\) contains the feasible system-level responses whose
summed Frobenius deviation from the baseline response \(\Phi_0\) is at most
\(R\).  This sum is the centered response gain \(\mathfrak r_0(\Phi)\).
For disturbances bounded by \(W\), the gain bound implies
\(
  \norm{y_t^\Phi-y_t^{\Phi_0}}
  \le W\mathfrak r_0(\Phi)
  \le WR
\)
at every round.

We retain the state- and input-response notation from the main text.  The
baseline blocks are
\[
  \Phi_{0,x}^{[i]}=F_0^{i-1},
  \qquad
  \Phi_{0,u}^{[i]}=-K_0F_0^{i-1}.
\]
We also abbreviate
\[
\begin{aligned}
  \Lambda_0&=\Lambda_x(K_0)
  =\sum_{r\ge0}\opnorm{F_0^r},\\
  \Lambda_{0,B}&=\sum_{r\ge0}\opnorm{F_0^rB},&
  \chi_0&=\sqrt{1+\opnorm{K_0}^2}.
\end{aligned}
\]

\subsection{Controllers represented by the response ball}
\label{sls:app-class}

First, every finite-dimensional internally stable linear dynamical
controller induces a response in \(\cS(R)\) for some finite \(R\), and every
response in \(\cS(R)\) has an internally stabilizing realization.  We then
give an explicit radius that covers the stable linear state-feedback class.

\begin{proof}[Proof of the first two claims in \cref{prop:sls-inclusions}]
Write a finite-dimensional, zero-initialized linear dynamical controller as
\[
  s_{t+1}=A_cs_t+B_cx_t,
  \qquad
  u_t=C_cs_t+D_cx_t,
  \qquad s_1=0,
\]
and define
\[
  F_\pi=
  \begin{bmatrix}
    A+BD_c&BC_c\\ B_c&A_c
  \end{bmatrix},
  \quad
  E=\begin{bmatrix}I\\0\end{bmatrix},
  \quad
  C_x=\begin{bmatrix}I&0\end{bmatrix},
  \quad
  C_u=\begin{bmatrix}D_c&C_c\end{bmatrix}.
\]
Propagation of a disturbance impulse gives
\begin{equation}
  \Phi_{\pi,x}^{[i]}=C_xF_\pi^{i-1}E,
  \qquad
  \Phi_{\pi,u}^{[i]}=C_uF_\pi^{i-1}E.
  \label{sls:eq-dynamic-response}
\end{equation}
The first block row of \(F_\pi\) implies
\(
  \Phi_{\pi,x}^{[i+1]}
  =A\Phi_{\pi,x}^{[i]}+B\Phi_{\pi,u}^{[i]}
\), so the affine system-level synthesis constraint holds.  Internal
stability makes \(F_\pi\)
Schur.  Hence there are \(C<\infty\) and \(\rho<1\) such that
\(
  \opnorm{F_\pi^i}\le C\rho^i
\).
Both sequences in \eqref{sls:eq-dynamic-response} are therefore absolutely
summable.  The baseline response is absolutely summable as well, and thus
\(
  \mathfrak r_0(\Phi_\pi)<\infty
\).

Conversely, if \(\Phi\in\cS(R)\), then \(\mathfrak r_0(\Phi)\le R\) and
absolute summability of the baseline imply that its state and input response
sequences are absolutely summable.  Let
\[
  \Phi_x(z)=\sum_{i\ge1}\Phi_x^{[i]}z^{-i},
  \qquad
  \Phi_u(z)=\sum_{i\ge1}\Phi_u^{[i]}z^{-i}
\]
be the corresponding stable, strictly proper transfer functions.  The affine
constraint becomes
\[
  (zI-A)\Phi_x(z)-B\Phi_u(z)=I.
\]
The standard state-feedback system-level synthesis realization uses the
stable filters
\[
  \widetilde\Phi_x=z\Phi_x-I,
  \qquad
  \widetilde\Phi_u=z\Phi_u,
\]
and the interconnection
\[
  \widehat w=x-\widehat x,
  \qquad
  \widehat x=\widetilde\Phi_x\widehat w,
  \qquad
  u=\widetilde\Phi_u\widehat w.
\]
Because \(z\Phi_x=I+\widetilde\Phi_x\) has identity feedthrough, this
interconnection
is causally well posed.  Eliminating \(\widehat w\) gives
\(u=\Phi_u\Phi_x^{-1}x\).  Substitution into the plant yields
\[
  \bigl[(zI-A)\Phi_x-B\Phi_u\bigr]\Phi_x^{-1}x=w.
\]
The affine identity in brackets equals \(I\), and therefore
\(x=\Phi_xw\) and \(u=\Phi_uw\).  All filters from the exogenous disturbance to the
internal signals are built from the stable responses
\(\Phi_x,\Phi_u,\widetilde\Phi_x,\widetilde\Phi_u\), which proves internal
stability; see
\cite[Theorem~1]{wang2019sls}.  The filters need not be rational, so the
realization can be infinite-dimensional.
\end{proof}

\begin{proof}[Static-feedback part of \cref{prop:sls-inclusions}]
Fix \(K\in\cK_{\kappa,\gamma}\), put \(F_K=A-BK\), and let
\(
  \Delta_K=K_0-K
\).
The response of \(u_t=-Kx_t\) is
\[
  \Phi_{K,x}^{[i]}=F_K^{i-1},
  \qquad
  \Phi_{K,u}^{[i]}=-KF_K^{i-1}.
\]
For \(r\ge1\), the telescoping identity yields
\[
  P^{[r+1]}:=F_K^r-F_0^r
  =\sum_{a+b=r-1}F_0^aB\Delta_KF_K^b.
\]
Consequently,
\begin{equation}
  \sum_{i\ge1}\fnorm{P^{[i]}}
  \le
  \Lambda_{0,B}\fnorm{\Delta_K}
  \sum_{b\ge0}\opnorm{F_K^b}
  \le
  \frac{\kappa^2\Lambda_{0,B}}\gamma\fnorm{\Delta_K}.
  \label{sls:eq-static-state-centered}
\end{equation}
The centered input response is
\[
  \Phi_{K,u}^{[i]}-\Phi_{0,u}^{[i]}
  =-K_0P^{[i]}+\Delta_KF_K^{i-1}.
\]
Using the triangle inequality for the stacked state--input block and then
\eqref{sls:eq-static-state-centered},
\[
\begin{aligned}
  \mathfrak r_0(\Phi_K)
  &\le
  \chi_0\sum_{i\ge1}\fnorm{P^{[i]}}
  +\fnorm{\Delta_K}\sum_{i\ge1}\opnorm{F_K^{i-1}}\\
  &\le
  \frac{\kappa^2}{\gamma}
  (1+\chi_0\Lambda_{0,B})\fnorm{K-K_0}.
\end{aligned}
\]
Taking the supremum over \(K\in\cK_{\kappa,\gamma}\) proves
\eqref{eq:sls-static-radius}.
\end{proof}

\begin{proof}[Proof of \cref{prop:sls-no-tail}]
Take the scalar plant \(A=0\), \(B=1\), and tracker \(K_0=0\).  Its baseline
response is
\[
  \Phi_{0,x}^{[1]}=1,
  \qquad \Phi_{0,x}^{[i]}=0\ (i\ge2),
  \qquad \Phi_{0,u}^{[i]}=0\ (i\ge1).
\]
For a delay \(j\ge1\), a sign \(s\in\{-1,1\}\), and \(a=R/2\), define
\begin{equation}
  \Phi_{j,s,x}^{[i]}
  =\Phi_{0,x}^{[i]}+sa\mathbf 1\{i=j+1\},
  \qquad
  \Phi_{j,s,u}^{[i]}=sa\mathbf 1\{i=j\}.
  \label{sls:eq-delay-response}
\end{equation}
The scalar response recursion is
\(\Phi_x^{[i+1]}=\Phi_u^{[i]}\), which this response satisfies.  Its centered
response gain, namely its summed deviation from the baseline, is \(2a=R\).
Given \(h\), choose \(j>h\): both nonzero centered
blocks then occur after \(h\), so the tail is \(R\).  For every
\(\Phi\in\cS(R)\), the same tail is at most
\(\mathfrak r_0(\Phi)\le R\), which proves the matching upper bound.
The response is realized by the causal finite-memory controller
\(
  u_t=sa w_{t-j}
\), since past disturbances are recovered from
\(
  w_r=x_{r+1}-u_r
\).
\end{proof}

Because no cutoff approximates the whole response ball, the upper-bound
algorithm keeps every response block that can affect the horizon.  We now
define this exact-prefix problem and its mirror-descent update.

\subsection{Exact-prefix mirror descent}
\label{sls:app-upper}

Fix \(H=T-1\).  For a centered prefix write
\[
  P^{[i]}=\Phi_x^{[i]}-\Phi_{0,x}^{[i]},
  \qquad
  V^{[i]}=\Phi_u^{[i]}-\Phi_{0,u}^{[i]},
  \qquad
  Q^{[i]}=\begin{bmatrix}P^{[i]}\\V^{[i]}\end{bmatrix}.
\]
Subtracting the baseline and comparator response recursions gives the linear
subspace
\begin{equation}
  \cV_H=
  \left\{
    Q:
    P^{[1]}=0,
    \quad
    P^{[i+1]}=AP^{[i]}+BV^{[i]}
    \ (1\le i<H)
  \right\}.
  \label{sls:eq-prefix-subspace}
\end{equation}
Put
\begin{equation}
  p=1+\frac1{\log(eH)},
  \qquad
  \alpha=p-1,
  \qquad
  \norm Q_{p,2}=
  \left(\sum_{i=1}^H\fnorm{Q^{[i]}}^p\right)^{1/p},
  \label{sls:eq-p-geometry}
\end{equation}
and
\(
  \cQ_H(R)=\{Q\in\cV_H:\norm Q_{p,2}\le R\}
\).

\begin{lemma}[Near-\(\ell_1\) geometry]
\label{sls:lem-mirror-geometry}
Let
\(
  a_H=H^{1-1/p}=H^{1/(2+\log H)}\le e
\).
Then
\begin{equation}
  \sum_{i=1}^H\fnorm{Q^{[i]}}
  \le a_H\norm Q_{p,2}.
  \label{sls:eq-p-to-one}
\end{equation}
Moreover,
\(
  \Omega(Q)=\frac12\norm Q_{p,2}^2
\)
is \(\alpha\)-strongly convex with respect to \(\norm\cdot_{p,2}\), also
after restriction to \(\cV_H\).
\end{lemma}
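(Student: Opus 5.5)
The lemma has three parts: the $\ell_1$ comparison, the closed form and bound for $a_H$, and the strong convexity of $\Omega$. We take them in that order.

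\emph{The comparison \eqref{sls:eq-p-to-one}.} Apply Hölder's inequality to the vector of block norms $(\fnorm{Q^{[1]}},\ldots,\fnorm{Q^{[H]}})$ against the all-ones vector, with exponents $p$ and $p/(p-1)$. This gives
\[
  \sum_{i=1}^H\fnorm{Q^{[i]}}\le H^{1-1/p}\norm Q_{p,2}.
\]

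\emph{The value of $a_H$.} Since $p=1+1/\log(eH)$,
\[
  1-\frac1p=\frac{p-1}{p}=\frac{1}{1+\log(eH)}=\frac{1}{2+\log H},
\]
which gives the stated form of $a_H$. Then $\log a_H=\log H/(2+\log H)\le1$, so $a_H\le e$.

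\emph{Strong convexity: reduction.} We invoke the standard fact that for $1<p\le2$, the function $\frac12\norm{v}_p^2$ on $\R^n$ is $(p-1)$-strongly convex with respect to $\norm\cdot_p$. This is the Ball--Carlen--Lieb uniform convexity inequality, or equivalently the $2$-uniform smoothness of $\ell_q$ with constant $q-1$ for the dual exponent $q$. The task is to lift this scalar statement to the mixed norm $\norm{\cdot}_{p,2}$. We will try the duality route: $\alpha$-strong convexity of $\Omega$ with respect to $\norm\cdot_{p,2}$ is equivalent to $(1/\alpha)$-smoothness of its conjugate $\frac12\norm{\cdot}_{q,2}^2$ with respect to the dual norm $\norm\cdot_{q,2}$, where $q=p/(p-1)\ge2$.

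\emph{Strong convexity: smoothness of the mixed $\ell_q(\ell_2)$ norm.} It suffices to show the midpoint-type inequality
\[
  \frac12\bigl(\norm{X+Y}_{q,2}^2+\norm{X-Y}_{q,2}^2\bigr)\le\norm X_{q,2}^2+(q-1)\norm Y_{q,2}^2 .
\]
We argue in two steps. First, Hilbert-space valued sequences reduce to scalar ones: for each block $i$, the parallelogram law in Frobenius norm, combined with Hanner-type arguments, shows that the worst case occurs when $Y^{[i]}$ is aligned with $X^{[i]}$. Second, in the aligned case we apply the scalar $\ell_q$ two-point inequality with constant $q-1$. An alternative, second-order route also works. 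Compute the Hessian of $\frac12\norm{v}_{p,2}^2$ at points with nonzero blocks, lower-bound it in each block by $p-1$ times the radial/tangential weights, and use Hölder to compare the quadratic form with $\norm{h}_{p,2}^2$. Points with some zero block are then handled by a density/continuity argument.

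\emph{Restriction to $\cV_H$.} Strong convexity is defined by an inequality required for all pairs of points and all subgradients. For a restriction to a linear subspace, subgradients of the restricted function are projections of ambient subgradients, so the inequality is inherited, and the restriction claim follows immediately.

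\emph{Main obstacle.} We expect the lifting step, from scalar $\ell_p$ to the block $\ell_p(\ell_2)$ norm, to be the main obstacle. We would first try citing the known result that $\ell_q(\mathcal H)$ has the same modulus of smoothness as scalar $\ell_q$. If a self-contained argument is needed, we would use the block-Hessian computation.
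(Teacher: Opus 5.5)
Your proposal is correct. The H\"older comparison, the computation $1-1/p=1/(2+\log H)$ that gives $a_H\le e$, and the restriction to $\cV_H$ all match the paper.

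The two routes differ only in the strong-convexity step. The paper cites the Ball--Carlen--Lieb inequality directly for the mixed norm. That citation is legitimate because their theorem concerns Schatten-$p$ norms, and $\ell_p(\ell_2)$ sits isometrically inside Schatten-$p$: place each vectorized block $Q^{[i]}$ as a single column in its own diagonal block, so that its only singular value is $\fnorm{Q^{[i]}}$. You instead start from the scalar $\ell_p$ case and lift it by duality to $(q-1)$-smoothness of $\frac12\norm{\cdot}_{q,2}^2$.

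Your route is sound and needs only the scalar inequality, but two steps should be written out.
\begin{itemize}
\item The reduction to colinear blocks is a convexity fact, not a Hanner-type one. Fix $a_i=\fnorm{X^{[i]}}$ and $b_i=\fnorm{Y^{[i]}}$, and set $c_i=\ip{X^{[i]}}{Y^{[i]}}$. Then $\norm{X\pm Y}_{q,2}^2$ is the $\ell_{q/2}$ norm of the nonnegative vector $(a_i^2+b_i^2\pm2c_i)_i$. For $q\ge2$ your left-hand side is therefore convex in $c$ on the box $\abs{c_i}\le a_ib_i$, so it is maximized at a vertex $c_i=\epsilon_ia_ib_i$. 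There it equals the scalar expression with $y_i=\epsilon_ib_i$, while the right-hand side is unchanged.
\item Passing from the symmetric midpoint inequality to one-sided smoothness with the same constant $q-1$ uses that the inequality holds at every base point. Consequently $t\mapsto\frac12\norm{X+tY}_{q,2}^2-\frac{q-1}{2}t^2\norm{Y}_{q,2}^2$ is midpoint concave and continuous, hence concave.
\end{itemize}

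Your Hessian alternative also works: along a segment, each block vanishes at most once unless it vanishes identically. The paper's citation buys brevity; your argument buys a derivation from the scalar $\ell_p$ case alone.
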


\begin{proof}
H\"older's inequality gives
\(
  \norm Q_{1,2}\le H^{1-1/p}\norm Q_{p,2}
\), and the definition of \(p\) gives
\(
  H^{1-1/p}=\exp(\log H/(2+\log H))\le e
\).
For \(1<p\le2\), the Ball--Carlen--Lieb inequality states that one half of
the squared \(\ell_p(\ell_2)\) norm is \((p-1)\)-strongly convex in that
norm \cite{ball1994uniform}.  The Frobenius norm is the inner Euclidean norm
here.  Restriction to a linear subspace preserves the modulus.
\end{proof}

For \(Q\in\cQ_H(R)\), define the exact-prefix reference
\begin{equation}
\begin{aligned}
  \bar x_t(Q)
  &=\sum_{i=1}^{t-1}(\Phi_{0,x}^{[i]}+P^{[i]})w_{t-i},\\
  \bar u_t(Q)
  &=\sum_{i=1}^{t-1}(\Phi_{0,u}^{[i]}+V^{[i]})w_{t-i},\\
  \bar y_t(Q)&=(\bar x_t(Q),\bar u_t(Q)).
\end{aligned}
\label{sls:eq-prefix-reference}
\end{equation}
The affine constraints imply
\begin{equation}
  \bar x_{t+1}(Q)=A\bar x_t(Q)+B\bar u_t(Q)+w_t
  \qquad(t<T).
  \label{sls:eq-reference-dynamics}
\end{equation}
Initialize \(Q_1=0\).  On round \(t\), use
\begin{equation}
  u_t=\bar u_t(Q_t)-K_0(x_t-\bar x_t(Q_t)).
  \label{sls:eq-tracking-control}
\end{equation}
After observing \(c_t\), set
\(
  f_t(Q)=c_t(\bar y_t(Q))
\).  We use the block Frobenius pairing
\[
  \ip{G}{Q}
  =\sum_{i=1}^H
    \operatorname{tr}\!\left((G^{[i]})^\top Q^{[i]}\right)
\]
on \(\cV_H\).  Let \(g_t\) be a subgradient of the restriction of \(f_t\)
to this subspace, and define the Bregman divergence
\[
  D_\Omega(Q,Q')
  =\Omega(Q)-\Omega(Q')-\ip{\nabla\Omega(Q')}{Q-Q'}.
\]
The mirror-descent update is
\begin{equation}
  Q_{t+1}\in
  \argmin_{Q\in\cQ_H(R)}
  \left\{\eta\ip{g_t}{Q}+D_\Omega(Q,Q_t)\right\}.
  \label{sls:eq-omd-update}
\end{equation}
This construction is causal: \eqref{sls:eq-prefix-reference} uses only
disturbances revealed before round \(t\).  The learner's prefix need not admit
an infinite stable extension.  The constraints in
\eqref{sls:eq-prefix-subspace} make the reference dynamically feasible
through round \(T\), and \(K_0\) stabilizes the tracking error.

\begin{lemma}[Prefix sensitivity]
\label{sls:lem-response-sensitivity}
For \(Q,Q'\in\cQ_H(R)\) and \(t\le T\),
\begin{align}
  \norm{\bar y_t(Q)-\bar y_t(Q')}
  &\le Wa_H\norm{Q-Q'}_{p,2},
  \label{sls:eq-y-sensitivity}\\
  \norm{\bar x_{t+1}(Q)-\bar x_{t+1}(Q')}
  &\le Wa_H\norm{Q-Q'}_{p,2}
  \qquad(t<T).
  \label{sls:eq-x-sensitivity}
\end{align}
If \(c_t\) is \(L\)-Lipschitz, then
\begin{equation}
  \norm{g_t}_{p,2,*}\le LWa_H.
  \label{sls:eq-gradient-bound}
\end{equation}
\end{lemma}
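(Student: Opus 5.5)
The plan is to prove all three claims from linearity of the prefix map \(Q\mapsto\bar y_t(Q)\), combined with the near-\(\ell_1\) geometry of \cref{sls:lem-mirror-geometry}.

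\textbf{First bound \eqref{sls:eq-y-sensitivity}.} The baseline terms cancel in the difference, so
\[
\bar y_t(Q)-\bar y_t(Q')=\sum_{i=1}^{t-1}\bigl(Q^{[i]}-Q'^{[i]}\bigr)w_{t-i},
\]
where the stacked block \(Q^{[i]}\) acts on \(w_{t-i}\) to give the state--input pair. Since \(\opnorm{\cdot}\le\fnorm{\cdot}\) and \(\norm{w_s}\le W\), the triangle inequality bounds the norm by
\[
W\sum_{i=1}^{t-1}\fnorm{Q^{[i]}-Q'^{[i]}}\le W\sum_{i=1}^{H}\fnorm{Q^{[i]}-Q'^{[i]}}.
\]
This step uses \(t-1\le H=T-1\). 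Then \eqref{sls:eq-p-to-one}, applied to \(Q-Q'\), gives the factor \(a_H\).

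\textbf{Second bound \eqref{sls:eq-x-sensitivity}.} For \(t<T\), the state prefix at round \(t+1\) uses blocks \(1,\dots,t\le H\). The difference is
\[
\sum_{i=1}^{t}\bigl(P^{[i]}-P'^{[i]}\bigr)w_{t+1-i}.
\]
Since \(\fnorm{P^{[i]}}\le\fnorm{Q^{[i]}}\), the same Hölder step applies and yields \(Wa_H\norm{Q-Q'}_{p,2}\). One could instead use \eqref{sls:eq-reference-dynamics}, but that would introduce a factor \(B_+\). The direct expansion avoids this factor, and that is why the stated constant carries no \(B_+\).

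\textbf{Gradient bound \eqref{sls:eq-gradient-bound}.} By \eqref{sls:eq-y-sensitivity}, \(f_t=c_t\circ\bar y_t\) restricted to \(\cV_H\) is \(LWa_H\)-Lipschitz with respect to \(\norm{\cdot}_{p,2}\). By convexity, any subgradient \(g_t\) satisfies
\[
\ip{g_t}{D}\le f_t(Q+D)-f_t(Q)\le LWa_H\norm{D}_{p,2}
\]
for every \(D\in\cV_H\). The bound for \(\pm D\) gives \(\abs{\ip{g_t}{D}}\le LWa_H\norm D_{p,2}\), so the dual norm, defined on the subspace, is at most \(LWa_H\).

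\textbf{Main obstacle.} The only delicate point is the meaning of the dual norm on \(\cV_H\). The Lipschitz condition on \(c_t\) is required on the region actually reached, namely the convex hull of references \(\bar y_t(Q)\) with \(Q\in\cQ_H(R)\). If \(c_t\) is globally Lipschitz this is automatic; otherwise I will note that \(\cQ_H(R)\) is convex, so the segment between \(Q\) and \(Q+D\) stays in the admissible region. I expect the rest to be routine.
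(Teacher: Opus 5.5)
Your proposal is correct and matches the paper's proof: the baseline cancels, the triangle inequality with \(\opnorm{\cdot}\le\fnorm{\cdot}\) and the H\"older bound \eqref{sls:eq-p-to-one} give \eqref{sls:eq-y-sensitivity}, keeping only the state block (a prefix of length \(t\le H\)) gives \eqref{sls:eq-x-sensitivity}, and composing the Lipschitz cost with this affine map bounds the dual norm. One caveat concerns your closing remark: convexity of \(\cQ_H(R)\) does not handle directions \(D\) that leave the ball, and a Lipschitz bound on a region does not bound subgradients at its boundary; for regular costs, however, \(c_t\) is differentiable with \(\norm{\nabla c_t}\le GD_R\) at every reference point, so the chain rule gives \(\norm{g_t}_{p,2,*}\le GD_R\,Wa_H\) directly.
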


\begin{proof}
The triangle inequality, \(\opnorm M\le\fnorm M\), and
\cref{sls:lem-mirror-geometry} give
\[
\begin{aligned}
  \norm{\bar y_t(Q)-\bar y_t(Q')}
  &\le W\sum_{i=1}^H\fnorm{Q^{[i]}-Q'^{[i]}}\\
  &\le Wa_H\norm{Q-Q'}_{p,2}.
\end{aligned}
\]
Keeping only the state block proves \eqref{sls:eq-x-sensitivity}.  The dual
norm bound follows by composing an \(L\)-Lipschitz convex function with this
affine map.
\end{proof}

The proof splits regret into reference regret and excess tracking cost.  A
comparator's visible prefix represents its trajectory exactly, so mirror
descent controls reference regret without a truncation error.  Movement of
the mirror-descent iterate drives the tracking error through the stable
tracker, and Lipschitzness converts that error into excess cost.

\begin{proof}[Proof of \cref{thm:sls-convex}]
Fix \(\Phi\in\cS(R)\), and let \(Q_{\Phi,H}\) be its first \(H\) centered
blocks.  The homogeneous response recursion puts this prefix in \(\cV_H\), while
\[
  \norm{Q_{\Phi,H}}_{p,2}
  \le\sum_{i=1}^H\fnorm{Q_\Phi^{[i]}}
  \le\mathfrak r_0(\Phi)
  \le R.
\]
Thus \(Q_{\Phi,H}\in\cQ_H(R)\).  For every \(t\le T\),
\begin{equation}
  \bar y_t(Q_{\Phi,H})=(x_t^\Phi,u_t^\Phi).
  \label{sls:eq-exact-prefix}
\end{equation}
This is an identity, not a truncation estimate: blocks after \(H=T-1\)
cannot affect the horizon.

Mirror descent with the \(\alpha\)-strongly convex regularizer in
\cref{sls:lem-mirror-geometry} gives
\[
  \sum_{t=1}^T(f_t(Q_t)-f_t(Q_{\Phi,H}))
  \le
  \frac{D_\Omega(Q_{\Phi,H},0)}\eta
  +\frac\eta{2\alpha}\sum_{t=1}^T\norm{g_t}_{p,2,*}^2.
\]
Since \(\nabla\Omega(0)=0\), the Bregman term is at most \(R^2/2\).
Using \eqref{sls:eq-gradient-bound},
\begin{equation}
  \sum_{t=1}^T(f_t(Q_t)-f_t(Q_{\Phi,H}))
  \le
  \frac{R^2}{2\eta}
  +\frac{\eta T L^2W^2a_H^2}{2\alpha}.
  \label{sls:eq-omd-regret}
\end{equation}

It remains to compare the physical trajectory with the current reference.
Let \(e_t=x_t-\bar x_t(Q_t)\).  From
\eqref{sls:eq-reference-dynamics} and \eqref{sls:eq-tracking-control},
\begin{equation}
  e_{t+1}=F_0e_t+\delta_{t+1},
  \qquad
  \delta_{t+1}=\bar x_{t+1}(Q_t)-\bar x_{t+1}(Q_{t+1}).
  \label{sls:eq-error-recursion}
\end{equation}
Since \(e_1=0\), unrolling the recursion and using
\(\sum_{r\ge0}\opnorm{F_0^r}=\Lambda_0\) bounds the sum of the tracking
errors by \(\Lambda_0\) times the sum of the reference changes.  Applying
\eqref{sls:eq-x-sensitivity} gives
\begin{equation}
  \sum_{t=1}^T\norm{e_t}
  \le
  \Lambda_0Wa_H
  \sum_{t=1}^{T-1}\norm{Q_{t+1}-Q_t}_{p,2}.
  \label{sls:eq-error-sum}
\end{equation}
The optimality condition in \eqref{sls:eq-omd-update}, tested against
\(Q_t\), and strong convexity of \(\Omega\) yield
\begin{equation}
  \norm{Q_{t+1}-Q_t}_{p,2}
  \le\frac\eta\alpha\norm{g_t}_{p,2,*}
  \le\frac{\eta LWa_H}\alpha.
  \label{sls:eq-movement}
\end{equation}
Moreover,
\(
  (x_t,u_t)-\bar y_t(Q_t)=(e_t,-K_0e_t)
\), whose norm is at most \(\chi_0\norm{e_t}\).  Hence
\begin{equation}
\begin{aligned}
  \sum_{t=1}^T
  \bigl(c_t(x_t,u_t)-c_t(\bar y_t(Q_t))\bigr)
  &\le L\chi_0\sum_{t=1}^T\norm{e_t}\\
  &\le
  \frac{\eta T L^2W^2a_H^2}{\alpha}\chi_0\Lambda_0.
\end{aligned}
\label{sls:eq-tracking-cost}
\end{equation}
Adding \eqref{sls:eq-omd-regret} and \eqref{sls:eq-tracking-cost}, and using
\eqref{sls:eq-exact-prefix}, gives
\begin{equation}
  \Reg_T(\Phi)
  \le
  \frac{R^2}{2\eta}
  +\frac{\eta T L^2W^2a_H^2}{2\alpha}
   (1+2\chi_0\Lambda_0).
  \label{sls:eq-eta-bound}
\end{equation}
For \(R>0\), choose
\[
  \eta=
  \frac{R\sqrt\alpha}
  {LWa_H\sqrt{T(1+2\chi_0\Lambda_0)}}.
\]
Then \(a_H\le e\) and
\(
  \alpha^{-1}=\log(eH)\le\log(eT)
\) prove \eqref{eq:sls-upper}.  For \(R=0\), tracking the baseline has zero
regret.

We next bound every state--input pair at which a cost or gradient is
evaluated.  Put
\(
  S_0=\sum_{i\ge1}\fnorm{\Phi_0^{[i]}}
\).
Every reference satisfies
\(
  \norm{\bar y_t(Q_t)}\le W(S_0+a_HR)
\).
Every comparator satisfies the smaller bound \(W(S_0+R)\).  Since the
diameter of \(\cQ_H(R)\) is at most \(2R\), the pointwise form of
\eqref{sls:eq-error-recursion} gives
\[
  \norm{e_t}
  \le2\Lambda_0Wa_HR.
\]
Thus every physical, reference, and comparator state--input pair lies in the
ball of radius
\[
  W\bigl[S_0+a_HR(1+2\chi_0\Lambda_0)\bigr]
  \le D_R.
\]
The preceding proof may therefore use \(L=GD_R\) whenever the costs are
\((G,D_R)\)-regular.
\end{proof}

The upper bound pays \(\sqrt{\log T}\) for allowing response mass at any
visible delay.  We next show that delayed controllers force the same factor.

\subsection{The delayed-response lower bound}
\label{sls:app-convex-lower}

The lower bound requires a delayed response whose action correlates with the
current cost signs.  The next lemma gives the required
\(\sqrt{\log m}\) improvement when the candidate delay vectors are bounded
and separated.  We prove this special form directly.

\begin{lemma}[Rademacher maximum for separated vectors]
\label{sls:lem-bernoulli-sudakov}
There are universal constants \(c>0\) and \(n_0\) such that the following
holds.  Let \(n\ge n_0\), let
\(\mathcal A\subset[-1,1]^n\) contain \(2\le m\le2n\) vectors, and suppose
\[
  \norm{a-b}\ge\sqrt n
  \qquad(a\ne b).
\]
If \(\xi\) has independent Rademacher coordinates, then
\begin{equation}
  \E_\xi\max_{a\in\mathcal A}\ip{\xi}{a}
  \ge c\sqrt{n\log m}.
  \label{sls:eq-bernoulli-sudakov}
\end{equation}
\end{lemma}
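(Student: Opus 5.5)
We prove \cref{sls:lem-bernoulli-sudakov} by comparing the Rademacher process with a Gaussian one and applying Sudakov minoration. In the Gaussian comparison we lose only a truncation error, and that error is controlled by the box constraint \(\mathcal A\subset[-1,1]^n\). This is the usual route to the Bernoulli form of Sudakov's inequality (Talagrand's theorem). Here it is elementary, because every vector has \(\ell_\infty\)-norm at most \(1\), while the separation is \(\sqrt n\) and the target is \(\sqrt{n\log m}\) with \(\log m\le\log(2n)\).

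\emph{Step 1 (Gaussian minoration).} Let \(g\) be standard Gaussian in \(\R^n\). The vectors are \(\sqrt n\)-separated, so Sudakov's inequality gives
\[
\E\max_{a\in\mathcal A}\ip{g}{a}\ge c_1\sqrt{n\log m}.
\]
For small \(m\), say \(m=2\), this also follows directly: \(\E\max\ge\frac12\E\abs{\ip{g}{a-b}}\ge c\sqrt n\).

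\emph{Step 2 (Bernoulli-to-Gaussian transfer).} Write \(g_i=\xi_i\abs{g_i}\) with \(\xi\) independent of \(\abs g\). Fix a threshold \(\tau=C_2\sqrt{\log(2n)}\) and split \(\abs{g_i}=\min(\abs{g_i},\tau)+(\abs{g_i}-\tau)_+\). Conditional on \(\abs g\), the map \(\xi\mapsto\max_a\sum_i\xi_i\lambda_ia_i\) with multipliers \(\lambda_i\in[0,\tau]\) is convex in \(\lambda\). Its maximum over the box \([0,\tau]^n\) is attained at a vertex, so it is at most \(\tau\) times the maximum over subsets \(S\) of \(\E_\xi\max_a\sum_{i\in S}\xi_ia_i\). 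Each such quantity is at most \(\E_\xi\max_a\ip\xi a\), by Jensen applied to the missing coordinates. Hence the truncated part contributes at most \(\tau\,\E_\xi\max_a\ip{\xi}{a}\). The tail part contributes at most
\[
\E\sum_i(\abs{g_i}-\tau)_+\max_a\abs{a_i}\le n\,\E(\abs{g_1}-\tau)_+\le ne^{-\tau^2/2},
\]
which is \(O(1)\) for \(C_2\) large. Combining the two parts,
\[
c_1\sqrt{n\log m}\le C_2\sqrt{\log(2n)}\,\E_\xi\max_a\ip\xi a+O(1).
\]

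\emph{Step 3 (the main obstacle).} Step 2 loses a factor \(\sqrt{\log(2n)}\), which is too much when \(m\) is small relative to \(n\). To avoid this loss I would use the separation more directly and not bound the maximum over all subsets \(S\) crudely. First, restrict \(\xi\) to coordinates where the vectors are spread: pass to a subset of \(m'\ge m/2\) vectors and a coordinate block of size \(k\asymp n\) on which the pairwise distances remain \(\gtrsim\sqrt n\). Then apply Talagrand's Bernoulli minoration, in the form that either \(\E_\xi\max_a\ip\xi a\ge c\sqrt{n\log m}\) holds, or the vectors need large \(\ell_\infty\)-norm \(\gtrsim\sqrt{n/\log m}\). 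Since \(\max_a\norm a_\infty\le1\le\sqrt{n/\log(2n)}\) once \(n\ge n_0\), the second alternative is excluded. If citing that theorem is not acceptable, my fallback is a self-contained argument. Take the subset's pairwise differences. By the moderate-deviation lower bound for the weighted sums \(\ip\xi{a-b}\), whose weights are bounded by \(2\) and whose variance is at least \(n\), each difference exceeds \(c\sqrt{n\log m}\) with probability at least \(m^{-1/2}\). A second-moment (Paley--Zygmund) count over nearly orthogonal pairs then shows that some vector exceeds \(c\sqrt{n\log m}\) with constant probability. Finally, I would symmetrize using \(\E\max_a\ip\xi a\ge\frac12\E\max_{a,b}\ip\xi{a-b}\). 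The near-orthogonality step is where I expect the real work.
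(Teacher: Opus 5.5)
Your proposal proves the lemma only through the citation in Step~3, and there the citation does all the work. Talagrand's Bernoulli minoration says that if $\norm{a-b}\ge\alpha$ for $a\ne b$ and $\max_a\norm{a}_\infty\le\lambda$, then $\E\max_a\ip\xi a\ge K^{-1}\min\{\alpha\sqrt{\log m},\alpha^2/\lambda\}$. With $\alpha=\sqrt n$, $\lambda=1$ and $\log m\le\log(2n)\le n$, the minimum is $\sqrt{n\log m}$. So the lemma follows in one line, for every $n\ge1$ and under the weaker condition $\log m\le n$.

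That is a legitimate proof, but it makes Steps~1--2 and the passage to a coordinate block superfluous. Step~2 is also a dead end on its own terms. It gives $\E_\xi\max_a\ip\xi a\gtrsim\sqrt{n\log m/\log(2n)}\le\sqrt n$, so it loses the full factor $\sqrt{\log(2n)}$ for every $m$, not only for small $m$. The reason is that a contraction-type Gaussian-to-Rademacher comparison is accurate only up to a $\sqrt{\log n}$ factor for general index sets.

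Your self-contained fallback has a genuine gap, at exactly the step you flag. Separation gives no control over correlations among the increments $\ip\xi{a-b}$. Increments sharing an endpoint are correlated at level about $1/2$ even for an orthogonal family. A block common to all vectors keeps them $\sqrt n$-separated while every pairwise correlation stays bounded away from zero. With constant correlations, a Paley--Zygmund count at level $c\sqrt{n\log m}$ has a second moment exceeding the squared first moment by a factor that grows polynomially in $m$. You would therefore need a mechanism for extracting many nearly uncorrelated increments from an arbitrary separated family. You do not give one, and supplying it is precisely what Slepian-type comparison does for Gaussians and what Talagrand's theorem does for signs.

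The paper avoids both the deep citation and any structural extraction. It takes Gaussian Sudakov as input and transfers it to signs by a Lindeberg swap applied to the soft maximum $F_\beta(z)=\beta^{-1}\log\sum_a e^{\beta\ip az}$.
\begin{itemize}
\item The box constraint gives $\abs{\partial_i^3F_\beta}\le8\beta^2$.
\item Gaussians and signs match in their first two moments, so replacing each $g_i$ by a sign costs $O(\beta^2n)$ in total.
\item Smoothing costs at most $\log m/\beta$.
\item With $\beta=(\log m/n)^{1/3}$ the combined error is $O\bigl((\log m/n)^{1/6}\sqrt{n\log m}\bigr)$, which is negligible because $m\le2n$ and $n\ge n_0$.
\end{itemize}
This is the idea missing from your Step~2: compare the two maxima through a smooth functional that sees only low moments, rather than through contraction, and use $\log m=o(n)$ to pay for the smoothing. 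The paper's route is elementary but needs $\log m=o(n)$ and large $n$. Your citation route is shorter and more general, but rests on a substantially harder theorem.
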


\begin{proof}
Let \(g\) be a standard Gaussian vector.  Gaussian Sudakov minoration
\cite{ledouxtalagrand1991} gives
\begin{equation}
  \E_g\max_{a\in\mathcal A}\ip ga
  \ge c_G\sqrt{n\log m}.
  \label{sls:eq-gaussian-sudakov}
\end{equation}
For \(\beta>0\), introduce the smooth maximum
\[
  F_\beta(z)
  =\frac1\beta\log\sum_{a\in\mathcal A}
    \exp(\beta\ip az).
\]
It satisfies
\[
  \max_{a\in\mathcal A}\ip az
  \le F_\beta(z)
  \le\max_{a\in\mathcal A}\ip az+\frac{\log m}{\beta}.
\]
For each coordinate \(i\), differentiation under the finite sum gives
\[
  \partial_i^3F_\beta(z)
  =\beta^2
    \E_{a\sim p_z}\bigl[(a_i-\E_{p_z}a_i)^3\bigr],
\]
where \(p_z(a)\) is proportional to
\(\exp(\beta\ip az)\).  Since \(|a_i|\le1\),
\(\sup_z|\partial_i^3F_\beta(z)|\le8\beta^2\).
Replacing the coordinates of \(g\) one at a time by Rademacher signs and
using third-order Taylor remainder (the variables match in their first two
moments) therefore gives
\begin{equation}
  \left|\E F_\beta(g)-\E F_\beta(\xi)\right|
  \le C\beta^2n.
  \label{sls:eq-lindeberg-comparison}
\end{equation}
Combining the last three displays,
\[
  \E\max_{a\in\mathcal A}\ip{\xi}{a}
  \ge
  c_G\sqrt{n\log m}
  -C\beta^2n-\frac{\log m}{\beta}.
\]
Choose \(\beta=(\log m/n)^{1/3}\).  The two error terms are at most
\[
  Cn^{1/3}(\log m)^{2/3}
  =
  C\left(\frac{\log m}{n}\right)^{1/6}
  \sqrt{n\log m}.
\]
Because \(m\le2n\), the prefactor tends to zero uniformly as
\(n\to\infty\).  Enlarging \(n_0\) absorbs the error into half of
\eqref{sls:eq-gaussian-sudakov}.
\end{proof}

\begin{lemma}[Delayed cross-correlations]
\label{sls:lem-cross-correlation}
Let \(\varepsilon_1,\ldots,\varepsilon_n\) and
\(\xi_1,\ldots,\xi_n\) be independent Rademacher sequences, and set
\begin{equation}
  C_j=\sum_{s=1}^{n-j}\varepsilon_{s+j}\xi_s,
  \qquad 1\le j<n.
  \label{sls:eq-cross-correlation}
\end{equation}
There is a universal \(c>0\) such that
\begin{equation}
  \E\max_{1\le j<n}|C_j|
  \ge c\sqrt{n\log(en)}.
  \label{sls:eq-cross-correlation-lower}
\end{equation}
\end{lemma}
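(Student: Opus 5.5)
We reduce to \cref{sls:lem-bernoulli-sudakov} by conditioning on \(\varepsilon\) and restricting to a well-separated sub-family of delays. For small \(n\) the claim is immediate: \(\E|C_1|\ge c\sqrt{n-1}\) by Khintchine, and \(\sqrt{\log(en)}\) is bounded when \(n\le n_1\). For large \(n\), we replace the maximum over \(1\le j<n\) of \(|C_j|\) by a maximum over a smaller set of delays \(J\subseteq\{1,\ldots,\lfloor n/2\rfloor\}\). Each such \(C_j\) is a sum of at least \(n/2\) terms. Conditionally on \(\varepsilon\), we have \(C_j=\ip{\xi}{a^{(j)}}\), where \(a^{(j)}_s=\varepsilon_{s+j}\mathbf 1\{s\le n-j\}\in[-1,1]^n\). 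To use the lemma we need a set of at least \(m\asymp n\) such vectors that are pairwise separated by \(\norm{a^{(j)}-a^{(k)}}\ge c'\sqrt n\). Rescaling by \(c'\) only changes the universal constant, and the lemma's requirement \(m\le 2n\) then holds automatically.

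\textbf{Separation step.} This is the main obstacle, because separation depends on \(\varepsilon\). For \(j\ne k\le n/2\),
\[
\norm{a^{(j)}-a^{(k)}}^2\ge\sum_{s\le n/2}(\varepsilon_{s+j}-\varepsilon_{s+k})^2=2\sum_{s\le n/2}\bigl(1-\varepsilon_{s+j}\varepsilon_{s+k}\bigr).
\]
So separation fails only if \(\sum_{s\le n/2}\varepsilon_{s+j}\varepsilon_{s+k}\ge n/4\). The products \(\varepsilon_{s+j}\varepsilon_{s+k}\), \(s=1,\ldots,n/2\), are Rademacher. They are not independent, but they form a martingale-difference sequence when ordered by the larger index \(s+\max(j,k)\): that index is fresh at each step. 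Alternatively, split \(s\) into residue classes modulo \(|j-k|\), within which the products are independent. Either way, Azuma or Hoeffding gives failure probability at most \(e^{-cn}\). A union bound over at most \(n^2\) pairs shows that the event \(E\), on which all pairs in \(\{1,\ldots,\lfloor n/2\rfloor\}\) are \(\sqrt n/2\)-separated, has probability at least \(1/2\) once \(n\) is large.

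\textbf{Conclusion.} On \(E\), apply \cref{sls:lem-bernoulli-sudakov} conditionally on \(\varepsilon\), with \(m=\lfloor n/2\rfloor\) and separation scaled by a constant (rescale the vectors or adjust constants in its proof). This gives
\[
\E_\xi\max_j C_j\ge c\sqrt{n\log n}.
\]
Off \(E\), use \(\max_j|C_j|\ge0\). Since \(\max_j|C_j|\ge\max_j C_j\), taking expectations gives
\[
\E\max_j|C_j|\ge\tfrac12\,c\sqrt{n\log n}\ge c''\sqrt{n\log(en)}
\]
for \(n\ge n_1\). Combining with the small-\(n\) case proves \eqref{sls:eq-cross-correlation-lower}.
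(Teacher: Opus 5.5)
Your proposal is correct and follows essentially the same route as the paper. Both arguments condition on \(\varepsilon\), use a concentration bound for the shifted inner products plus a union bound to make the delay vectors pairwise separated with probability at least \(1/2\), apply \cref{sls:lem-bernoulli-sudakov} conditionally on that event, and treat small \(n\) by Khintchine applied to \(C_1\).

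The differences are minor. With your cutoff \(j\le n/2\) the separation is only of order \(\sqrt{n/2}\), so you need the constant-adjusted form of the lemma; rescaling the vectors alone would violate its coordinate bound \(\abs{a_i}\le 1\). The paper instead restricts to \(j\le\lfloor n/4\rfloor\), where \(\norm{v_j}^2\ge 3n/4\) and \(\abs{\ip{v_j}{v_k}}\le n/8\) give \(\norm{v_j-v_k}^2\ge n\), so the lemma applies verbatim. The paper also uses bounded differences rather than Azuma for the concentration step.
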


\begin{proof}
First suppose \(n\) is sufficiently large.  Let \(m=\lfloor n/4\rfloor\)
and, conditional on \(\varepsilon\), define \(v_j\in\R^n\) by
\[
  (v_j)_s=\varepsilon_{s+j}\mathbf1\{s\le n-j\},
  \qquad 1\le j\le m.
\]
Then \(C_j=\ip{v_j}{\xi}\),
\(
  \norm{v_j}^2=n-j\ge3n/4
\), and \(\norm{v_j}_\infty=1\).  For \(j<k\),
\[
  \ip{v_j}{v_k}
  =\sum_{s=1}^{n-k}\varepsilon_{s+j}\varepsilon_{s+k}.
\]
This has mean zero.  Changing one coordinate of \(\varepsilon\) changes at
most two summands, each by at most two.  The bounded-differences inequality
gives
\begin{equation}
  \Pr_\varepsilon\left(
    |\ip{v_j}{v_k}|>n/8
  \right)\le2e^{-n/512}.
  \label{sls:eq-shift-concentration}
\end{equation}
A union bound shows that, with probability at least \(1/2\), every pair has
inner product at most \(n/8\) in magnitude.  On this event,
\(
  \norm{v_j-v_k}^2\ge n
\).
For large \(n\), \cref{sls:lem-bernoulli-sudakov} applies and gives,
conditionally on \(\varepsilon\),
\[
  \E_\xi\max_{j\le m}|C_j|
  \ge c\sqrt{n\log m}.
\]
The separation event has probability at least \(1/2\), so averaging this
conditional lower bound over \(\varepsilon\) proves the claim for large
\(n\).  For the finitely many remaining values, \(C_1\) is a sum of
\(n-1\) independent signs, so Khintchine's inequality gives
\(\E|C_1|\ge\sqrt{n-1}/\sqrt2\).  Decreasing \(c\) proves the result for
every \(n\ge2\).
\end{proof}

\begin{proof}[Proof of \cref{thm:sls-convex-lower}]
Use the scalar plant \(x_{t+1}=u_t+w_t\) with \(K_0=0\).  Before play draw
independent Rademacher sequences \((\varepsilon_t)_{t\le T}\) and
\((\xi_t)_{t\le T}\), and set
\begin{equation}
  w_t=W\xi_t,
  \qquad
  c_t(x,u)=L\varepsilon_tu.
  \label{sls:eq-lower-randomization}
\end{equation}
The costs are convex and globally \(L\)-Lipschitz.  At time \(t\), the
learner's action is measurable with respect to its private seed and signs
revealed before \(t\), while \(\varepsilon_t\) is fresh.  Thus
\(
  \E[\varepsilon_tu_t]=0
\), and the learner's expected total loss is zero.

For a delay \(j\) and sign \(s\), the response
\eqref{sls:eq-delay-response} belongs to \(\cS(R)\) and has
\(
  u_t^{j,s}=s(R/2)w_{t-j}
\), with \(w_r=0\) for \(r\le0\).  Its loss is
\[
  \sum_{t=1}^Tc_t(x_t^{j,s},u_t^{j,s})
  =\frac{sLWR}{2}
    \sum_{t=j+1}^T\varepsilon_t\xi_{t-j}
  =\frac{sLWR}{2}C_j.
\]
The best sign and delay have loss at most
\(
  -(LWR/2)\max_{1\le j<T}|C_j|
\).
Therefore \cref{sls:lem-cross-correlation} gives
\[
  \E\inf_{\Phi\in\cS(R)}
  \sum_{t=1}^T c_t(y_t^\Phi)
  \le -cLWR\sqrt{T\log(eT)}.
\]
The learner's expected loss is zero, so this proves
\eqref{eq:sls-lower}.  Because the argument holds after conditioning on the
learner's private seed, it also applies to randomized controllers.  All
signs are drawn before play, so the adversary is oblivious.

For the bounded-region claim, use the state--input radius
\begin{equation}
  D^{\rm sc}_R=W(1+3eR),
  \label{sls:eq-scalar-radius}
\end{equation}
which is \eqref{eq:sls-regular-radius} on this plant.  Apply the preceding
construction with \(L=GD^{\rm sc}_R\).  Its linear costs have gradient norm
exactly \(GD^{\rm sc}_R\), hence are \((G,D^{\rm sc}_R)\)-regular.  Since
\(
  D^{\rm sc}_R\asymp W(1+R)
\), this proves the lower half of \eqref{eq:sls-regular-minimax}; the upper
half is the bounded-region part of \cref{thm:sls-convex}.
\end{proof}

The convex upper and lower bounds now match.  The last question is whether
strong convexity removes the cost of choosing among many response delays; the
next construction shows that it does not.

\subsection{Strong convexity over the system-level response ball}
\label{sls:app-strong-lower}

The next lemma supplies the two ingredients for the lower bound: every subset
of delay vectors has a large correlation with fresh signs, while the
convolution matrix of all delay vectors has bounded operator norm.

\begin{lemma}[Separated shifts with bounded convolution norm]
\label{sls:lem-shift-convolution}
There are universal \(c,C>0\) and \(T_0\) such that the following holds for
every \(T\ge T_0\).  Let \(M=\lfloor T/4\rfloor\), draw independent
Rademacher signs \(\xi_1,\ldots,\xi_T\), and define
\begin{equation}
  (v_j)_t=\xi_{t-j}\mathbf1\{t>j\},
  \qquad
  X_\xi=[v_1\ \cdots\ v_M]\in\R^{T\times M}.
  \label{sls:eq-shift-matrix}
\end{equation}
With probability at least \(1/2\),
\begin{align}
  \opnorm{X_\xi}^2&\le CT\log(eT),
  \label{sls:eq-toeplitz-norm}\\
  \E_\varepsilon\max_{j\in J}|\ip{\varepsilon}{v_j}|
  &\ge c\sqrt{T\log(e|J|)}
  \quad\text{for every nonempty }J\subseteq\{1,\ldots,M\},
  \label{sls:eq-grouped-correlation}
\end{align}
where \(\varepsilon\) is an independent Rademacher vector.
\end{lemma}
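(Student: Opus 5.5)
The plan is to prove each of the two properties on its own event of probability at least \(3/4\) and then take a union bound. Both properties concern the random shift vectors \(v_j\), which are columns of a partial Toeplitz (convolution) matrix built from \(\xi\).

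\emph{Operator norm.} Embed \(X_\xi\) in the full \(T\times T\) lower-triangular Toeplitz matrix \(\mathcal T_\xi\) with entries \((\mathcal T_\xi)_{t,j}=\xi_{t-j}\mathbf 1\{t>j\}\). Restricting to columns cannot increase the norm, so \(\opnorm{X_\xi}\le\opnorm{\mathcal T_\xi}\).

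Next, pad to a \(2T\times 2T\) circulant matrix. A circulant is diagonalized by the discrete Fourier transform, so its operator norm is at most \(\max_\omega\bigl|\sum_{k=1}^{T}\xi_k e^{-i\omega k}\bigr|\), taken over the \(2T\) Fourier frequencies. The triangular Toeplitz block is a submatrix, so the same bound applies to it.

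For each fixed \(\omega\), the real and imaginary parts are Rademacher sums with variance at most \(T\). Hoeffding's inequality therefore gives
\[
\Pr\Bigl(\bigl|\textstyle\sum_k\xi_ke^{-i\omega k}\bigr|>s\Bigr)\le 4e^{-s^2/(4T)}.
\]
A union bound over the \(2T\) frequencies, with \(s^2=CT\log(eT)\), proves \eqref{sls:eq-toeplitz-norm} with probability at least \(3/4\) once \(C\) is large. This step is routine.

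\emph{Grouped correlation.} Here the content is the uniformity over all subsets \(J\). The plan is to reuse the separation argument from \cref{sls:lem-cross-correlation}.

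Each Gram entry \(\ip{v_j}{v_k}=\sum_s\xi_{s}\xi_{s+k-j}\) (over valid \(s\)) with \(j\ne k\) has mean zero and bounded differences. Hence \(\Pr(|\ip{v_j}{v_k}|>T/8)\le 2e^{-cT}\). A union bound over the at most \(M^2\le T^2\) pairs gives, with probability at least \(3/4\) for \(T\ge T_0\), that all off-diagonal inner products are at most \(T/8\) in magnitude. Since \(\norm{v_j}^2=T-j\ge 3T/4\), this yields \(\norm{v_j-v_k}^2\ge T\) for all \(j\ne k\).

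On this single event, which does not depend on \(J\), apply \cref{sls:lem-bernoulli-sudakov} with \(n=T\) and \(\mathcal A=\{v_j:j\in J\}\). The hypotheses hold: \(\mathcal A\subset[-1,1]^T\), it has \(|J|\le M\le 2T\) elements, and its points are \(\sqrt T\)-separated. Since \(\max|\cdot|\ge\max\), the lemma gives \(c\sqrt{T\log|J|}\) for \(|J|\ge2\).

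For \(|J|=1\), Khintchine's inequality gives \(\E|\ip{\varepsilon}{v_j}|\ge\norm{v_j}/\sqrt2\ge c\sqrt T\). Because \(\log(e|J|)\le 2\log|J|\) when \(|J|\ge2\), and \(\log(e|J|)=1\) when \(|J|=1\), the two cases combine into \(c\sqrt{T\log(e|J|)}\) after shrinking \(c\).

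\emph{Main obstacle.} The main obstacle is keeping the grouped bound uniform over the exponentially many subsets \(J\). The plan avoids a union bound over subsets entirely: the only randomness in \(\xi\) that matters is the pairwise-separation event, and the Sudakov-type \cref{sls:lem-bernoulli-sudakov} is a deterministic statement given separation. Its expectation is over the independent vector \(\varepsilon\), so on that event it holds for all \(J\) simultaneously.

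The remaining care is that the constant \(n_0\) in \cref{sls:lem-bernoulli-sudakov} must be absorbed into \(T_0\). That lemma needs only \(n=T\ge n_0\) and \(m\le 2n\), and the latter holds since \(M\le T/4\).

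A final union bound over the two events, each failing with probability at most \(1/4\), yields the claimed probability of at least \(1/2\).
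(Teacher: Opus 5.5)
Your proposal is correct and follows the paper's proof essentially step for step. Both use the same bounded-differences separation event, which holds uniformly over all \(J\) so that \cref{sls:lem-bernoulli-sudakov} applies simultaneously, and the same circulant embedding with Hoeffding over \(2T\) frequencies, intersecting two probability-\(3/4\) events.

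The only difference is that the paper applies the lemma to the symmetric set \(\{\pm v_j:j\in J\}\). This gives the absolute-value maximum directly and covers \(|J|=1\) without Khintchine. Also, your inequality \(\log(e|J|)\le2\log|J|\) fails at \(|J|=2\). The bound \(\log(e|J|)\le3\log|J|\), valid for all \(|J|\ge2\), suffices.
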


\begin{proof}
For \(j\le M\), \(\norm{v_j}^2=T-j\ge3T/4\).  The bounded-differences
argument in \eqref{sls:eq-shift-concentration}, followed by a union bound,
shows with probability at least \(3/4\) that
\begin{equation}
  |\ip{v_j}{v_k}|\le T/8
  \qquad(j\ne k).
  \label{sls:eq-all-shifts-separated}
\end{equation}
On this event, \(\{v_j,-v_j:j\in J\}\) has pairwise separation at least
\(\sqrt T\), coordinate bound one, and cardinality at most \(2M\le T\).
For sufficiently large \(T\), \cref{sls:lem-bernoulli-sudakov} applies
simultaneously to every \(J\), and proves
\eqref{sls:eq-grouped-correlation}.

For \eqref{sls:eq-toeplitz-norm}, pad \(\xi\) with zeros and embed
\(X_\xi\) in the \(2T\)-dimensional circulant convolution matrix.  Its
operator norm is
\[
  \max_{0\le q<2T}
  \left|
    \sum_{s=1}^T\xi_s e^{-2\pi\mathrm iqs/(2T)}
  \right|.
\]
Hoeffding's inequality for the real and imaginary parts, followed by a union
bound over the \(2T\) frequencies, bounds this by
\(
  C\sqrt{T\log(eT)}
\) with probability at least \(3/4\).  A submatrix cannot have larger
operator norm.  Intersecting the two probability-\(3/4\) events proves the
lemma.
\end{proof}

\begin{proof}[Proof of \cref{thm:sls-strong-lower}]
Let \(D=D_R^{\rm sc}=W(1+3eR)\) from
\eqref{sls:eq-scalar-radius}.  Draw independent Rademacher sequences
\(\xi_1,\ldots,\xi_T\) and \(\varepsilon_1,\ldots,\varepsilon_T\) before
play, and set
\begin{equation}
  w_t=W\xi_t,
  \qquad
  \ell=\frac{GD}{4},
  \qquad
  c_t(x,u)=
  \frac\mu2(x-w_{t-1})^2+\frac\mu2u^2+\ell\varepsilon_tu,
  \label{sls:eq-strong-lower-cost}
\end{equation}
where \(w_0=u_0=0\).  The Hessian is \(\mu I_2\), so the costs are globally
\(\mu\)-strongly convex.  On \(\norm{(x,u)}\le D\), using \(W\le D\) and
\(G\ge4\mu\),
\[
  \norm{\nabla c_t(x,u)}
  \le\mu(D+W)+\mu D+\ell
  \le3\mu D+\frac{GD}{4}
  \le GD.
\]
Thus every cost is \((G,D)\)-regular.

For any causal learner, the plant identity gives
\(
  x_t-w_{t-1}=u_{t-1}
\).
The fresh sign \(\varepsilon_t\) is conditionally independent of \(u_t\).
Therefore
\begin{equation}
  \E\sum_{t=1}^Tc_t(x_t,u_t)
  =\frac\mu2\E\sum_{t=1}^T(u_{t-1}^2+u_t^2)
  \ge0.
  \label{sls:eq-strong-learner-nonnegative}
\end{equation}

We now construct a hindsight comparator.  Let \(M=\lfloor T/4\rfloor\).
For \(\theta\in\R^M\), define
\begin{equation}
  \Phi_{\theta,u}^{[j]}=\theta_j,
  \quad
  \Phi_{\theta,x}^{[1]}=1,
  \quad
  \Phi_{\theta,x}^{[j+1]}=\theta_j
  \quad(1\le j\le M),
  \label{sls:eq-multi-delay-response}
\end{equation}
and set the remaining blocks to zero.  This is feasible for the scalar
response recursion.  With \(\theta_0=\theta_{M+1}=0\),
\begin{equation}
  \mathfrak r_0(\Phi_\theta)
  =\sum_{j=1}^{M+1}\sqrt{\theta_{j-1}^2+\theta_j^2}
  \le2\norm{\theta}_1.
  \label{sls:eq-multi-delay-gain}
\end{equation}
Hence \(\norm{\theta}_1\le B:=R/2\) implies
\(\Phi_\theta\in\cS(R)\).  With \(X_\xi\) from
\cref{sls:lem-shift-convolution}, its action vector is
\(
  u^\theta=WX_\xi\theta
\).
Writing \(h_j=\ip{\varepsilon}{v_j}\) and
\(h=(h_1,\ldots,h_M)\), the total comparator loss obeys
\begin{equation}
  \sum_{t=1}^Tc_t(y_t^{\Phi_\theta})
  \le
  \mu W^2\norm{X_\xi\theta}^2+\ell W\ip{h}{\theta}.
  \label{sls:eq-multi-delay-loss}
\end{equation}

The linear term rewards correlation with a delayed disturbance, whereas the
quadratic term penalizes the squared action norm.  Placing the entire
\(\ell_1\) budget on one delay maximizes the first effect but can make the
second too large.  Splitting the budget across \(k\) groups preserves a large
correlation reward and reduces \(\norm{\theta}_2^2\) to \(B^2/k\); we choose
\(k\) to balance the two terms.  Work on the event in
\cref{sls:lem-shift-convolution}; on its complement choose \(\theta=0\).
Put
\begin{equation}
  q=\frac{\mu WB}{\ell}
  =\frac{2\mu R}{G(1+3eR)},
  \qquad
  k=\max\left\{1,
    \left\lceil Kq\sqrt{T\log(eT)}\right\rceil
  \right\},
  \label{sls:eq-group-count}
\end{equation}
where \(K\) is a sufficiently large universal constant.  Since \(G\ge4\mu\),
\(q\le1/(6e)\).  After increasing \(T_0\), \(k\le M/2\).  Retain any
\(km\) of the \(M\) delays, where \(m=\lfloor M/k\rfloor\), and partition
them into \(k\) groups of size \(m\), discarding the remainder.  Then
\(\log(em)\ge c\log(eT)\).  In each group
\(J_g\), choose \(j_g\) maximizing \(|h_j|\), and set
\begin{equation}
  \theta_{j_g}=-\frac Bk\operatorname{sign}(h_{j_g}),
  \qquad
  \theta_j=0\quad\text{otherwise}.
  \label{sls:eq-group-comparator}
\end{equation}
Then \(\norm{\theta}_1\le B\) and \(\norm{\theta}_2^2\le B^2/k\).
Conditional on \(\xi\), \eqref{sls:eq-grouped-correlation} gives
\begin{equation}
  -\ell W\E_\varepsilon\ip{h}{\theta}
  \ge c\ell WB\sqrt{T\log(eT)}.
  \label{sls:eq-group-linear-gain}
\end{equation}
No independence among groups is needed.  On the same event,
\eqref{sls:eq-toeplitz-norm} gives
\[
  \mu W^2\norm{X_\xi\theta}^2
  \le C\mu W^2T\log(eT)\frac{B^2}{k}.
\]
If \(Kq\sqrt{T\log(eT)}\ge1\), substitute
\(
  k\ge Kq\sqrt{T\log(eT)}
\); otherwise \(k=1\) and use
\(
  q\sqrt{T\log(eT)}<1/K
\).  In both cases,
\begin{equation}
  \mu W^2\norm{X_\xi\theta}^2
  \le\frac CK\ell WB\sqrt{T\log(eT)}.
  \label{sls:eq-group-quadratic-cost}
\end{equation}
Choose \(K\) large enough that the right-hand side of
\eqref{sls:eq-group-quadratic-cost} is at most half the lower bound in
\eqref{sls:eq-group-linear-gain}.  The good event has probability at least
\(1/2\), and therefore \eqref{sls:eq-multi-delay-loss} yields
\[
  \E\inf_{\Phi\in\cS(R)}
  \sum_{t=1}^Tc_t(y_t^\Phi)
  \le-c\ell WB\sqrt{T\log(eT)}
  \le-cGDWR\sqrt{T\log(eT)}.
\]
Together with \eqref{sls:eq-strong-learner-nonnegative}, this proves
\eqref{eq:sls-strong-lower}.  Conditioning first on a randomized learner's
private seed proves the randomized statement.  The upper bound at the same
scale is the bounded-region part of \cref{thm:sls-convex}; strong convexity
only restricts the admissible cost sequence.
\end{proof}

\end{document}